\theoremstyle{plain}
\newtheorem{lem}{Lemma}[section]
\newtheorem{cor}[lem]{Corollary}
\newtheorem{prop}[lem]{Proposition}
\newtheorem{thm}[lem]{Theorem}
\newtheorem{intthm}{Theorem}
\theoremstyle{definition}
\newtheorem{defn}[lem]{Definition}
\newtheorem{ex}[lem]{Example}
\newtheorem{questions}[lem]{Questions}
\newtheorem{disc}[lem]{Remark}
\newtheorem*{assumption*}{Assumption}
\newtheorem*{notation*}{Notation}
\newcommand{\depth}{\operatorname{depth}}
\newcommand{\ann}{\operatorname{Ann}}
\newcommand{\Ker}{\operatorname{Ker}}
\newcommand{\ideal}[1]{\mathfrak{#1}}
\newcommand{\m}{\ideal{m}}
\newcommand{\wti}{\widetilde}
\newcommand{\bbn}{\mathbb{N}}
\newcommand{\xra}{\xrightarrow}
\newcommand{\onto}{\twoheadrightarrow}
\renewcommand{\geq}{\geqslant}
\renewcommand{\leq}{\leqslant}
\newcommand{\rad}[1]{\operatorname{rad}(#1)}
\newcommand{\ssm}{\smallsetminus}
\numberwithin{equation}{lem}
\begin{document}

\bibliographystyle{amsplain}

\author{Bethany Kubik}

\address{Bethany Kubik, USMA Department of Mathematical Sciences,
601 Thayer Rd.,
West Point, NY 10996
USA}

\email{bethany.kubik@usma.edu}


\author{Sean Sather-Wagstaff}

\address{Sean Sather-Wagstaff, Department of Mathematics,
NDSU Dept \#2750,
PO Box 6050,
Fargo, ND 58108-6050
USA}

\email{sean.sather-wagstaff@ndsu.edu}

\urladdr{http://www.ndsu.edu/pubweb/\~{}ssatherw/}

\thanks{\today}

\title 
{Path Ideals of Weighted Graphs}


\keywords{Cohen-Macaulay, edge ideals, path ideals, unmixed, weighted graphs}
\subjclass[2010]{Primary 05C22, 05E40, 13F55, 13H10;
Secondary 05C05,  05C69}

\begin{abstract}
We introduce and study the weighted $r$-path ideal of a weighted graph $G_\omega$,
which is a common generalization of Conca and De Negri's $r$-path ideal for unweighted graphs
and Paulsen and Sather-Wagstaff's edge ideal of the weighted graph.
Over a field, we explicitly describe primary decompositions of these ideals,
and we characterize Cohen-Macaulayness of these ideals for trees (with arbitrary $r$) and complete graphs (for $r=2$).
\end{abstract}

\maketitle


\section*{Introduction}

\begin{assumption*}
Throughout this paper, let $G$ be a (finite, simple) graph with vertex set $V=V(G)=\{v_1,\dots,v_n\}$ of cardinality $n\geq 1$ and edge set $E(G)=E$.  
Let $A$ be a non-zero commutative ring, and set $S=A[X_1,\dots,X_n]$ unless otherwise specified.
Fix an integer $r\in\bbn=\{1,2,\ldots\}$. 
\end{assumption*}

Commutative algebra and combinatorics have a rich history of fruitful interactions.
In this paper, we focus on the connections between commutative algebra and graph theory.
For our purposes, this begins with Villarreal's notion~\cite{villarreal:cmg,villarreal:ma} of the edge ideal associated to the graph $G$, 
which is the ideal $I(G)$ in $S$ ``generated by the edges of $G$''.
Much research has been done on the relations between the combinatorial properties of $G$ and the algebraic properties of $I(G)$;
see, e.g., \cite{francisco:wscmg,francisco:chpg,francisco:eiph,francisco:scmei, martinezbernal:appei,martinezbernal:csreic,morey:eiacp,vantuyl:bgeci,vantuyl:eium2,vantuyl:cmcg}.
For instance, it is straightforward to show that, when $A$ is a field, an irredundant primary decomposition of the ideal $I(G)$
is determined by ``vertex covers'' of the graph $G$. Thus, given decomposition information about $I(G)$, one can deduce
combinatorial information about $G$, and vice versa.

Recently, this construction has been generalized in two different directions relevant to our work.
First, Conca and De Negri~\cite{conca:msgililt} introduce the $r$-path ideal of $G$, when $G$ is a tree.
This is the ideal $I_r(G)$ of $S$ ``generated by the paths in $G$ of length $r$''.
This recovers Villarreal's edge ideal as the special case 
$I_1(G)=I(G)$. See also~\cite{morey:dcmppi} for useful properties of this construction, including a characterization
of the Cohen-Macaulay property of $I_r(G)$.

Next, Paulsen and Sather-Wagstaff~\cite{paulsen:eiwg} introduce the edge ideal of a weighted graph $G_\omega$, i.e.,
a graph $G$ equipped with a function $\omega\colon E\rightarrow\bbn$ that assigns to each edge $e$ of $G$ a weight $\omega(e)$.
The edge ideal $I(G_\omega)$ in this case is generated by the weighted edges of $G_\omega$. In particular, if $1\colon E\to \bbn$
is the constant function $1(e)=1$, then  $I(G_1)=I(G)$.
See Section~\ref{sec130827a} for foundational material about weighted graphs.

In the current paper, we introduce and study a common generalization of these two constructions, the 
\emph{weighted $r$-path ideal} associated to $G_\omega$.
This is the ideal $I_{r}(G_\omega)$ of $S$ that is ``generated by 
the weighted paths of length $r$ of $G$".
$$
I_{r}(G_\omega)=\left(X_{i_1}^{e_{i_1}}\cdots X_{i_{r+1}}^{e_{i_{r+1}}}\left|
\begin{tabular}{l} $v_{i_1}\dots v_{i_{r+1}}$ is a path  in $G$ with $e_{i_1}=\omega(v_{i_1}v_{i_2})$,\! \\
$e_{i_j}=\max\{\omega(v_{i_{j-1}}v_{i_j}),\omega(v_{i_j}v_{i_{j+1}})\}$ for $1<j\leq r$ \!\!\! \\
and $e_{i_{r+1}}=\omega(v_{i_r}v_{i_r+1})$ \\
\end{tabular}
\right.\right)S
$$
As before, this recovers the previous constructions as special cases with $I_r(G_1)=I_r(G)$ and $I_1(G_\omega)=I(G_\omega)$.

We investigate foundational properties of $I_r(G_\omega)$ in
Section~\ref{sec130827b}.
In particular, the following decomposition result
is proved in Theorem~\ref{prop130725}.

\begin{intthm}\label{intprop130725}
Given a weighted graph $G_{\omega}$
one has 
$$I_{r}(G_\omega)=\bigcap_{(W,\sigma)}P_{(W,\sigma)}=\bigcap_{\text{$(W,\sigma)$ min}}\!\!\!\! P_{(W,\sigma)}$$
where the first intersection is taken over all weighted $r$-path  vertex covers of $G_\omega$, and 
the second intersection is taken over all minimal weighted $r$-path  vertex covers of $G_\omega$.
Moreover, the second intersection is irredundant.
\end{intthm}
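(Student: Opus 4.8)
The plan is to prove the two equalities and the irredundancy separately, throughout exploiting that $I_r(G_\omega)$ and every candidate component $P_{(W,\sigma)}=(X_i^{\sigma(v_i)}\mid v_i\in W)S$ are monomial ideals, so that all the containments in sight can be checked on monomials. For the first equality, the inclusion $I_r(G_\omega)\subseteq\bigcap_{(W,\sigma)}P_{(W,\sigma)}$ is essentially a restatement of the definition of a weighted $r$-path vertex cover: a generating monomial $X_{i_1}^{e_{i_1}}\cdots X_{i_{r+1}}^{e_{i_{r+1}}}$ of $I_r(G_\omega)$ comes from a weighted path $v_{i_1}\cdots v_{i_{r+1}}$, the covering condition hands us an index $j$ with $v_{i_j}\in W$ and $\sigma(v_{i_j})\leq e_{i_j}$, and then $X_{i_j}^{\sigma(v_{i_j})}$ divides that monomial. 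For the reverse inclusion I would argue contrapositively: given a monomial $m=X_1^{b_1}\cdots X_n^{b_n}$ not in $I_r(G_\omega)$, I claim the pair $(V,\sigma_m)$ with $\sigma_m(v_i)=b_i+1$ is a weighted $r$-path vertex cover, for otherwise some weighted path would have $b_{i_j}\geq e_{i_j}$ at every vertex and then its associated generator would divide $m$; and visibly $m\notin P_{(V,\sigma_m)}$ since $X_i^{b_i+1}\nmid m$.

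Next I would pass from all covers to the minimal ones. The relevant partial order is $(W',\sigma')\preceq(W,\sigma)$ exactly when $W'\subseteq W$ and $\sigma'\geq\sigma$ on $W'$; this is designed so that $(W',\sigma')\preceq(W,\sigma)$ forces $P_{(W',\sigma')}\subseteq P_{(W,\sigma)}$, and the minimal weighted $r$-path vertex covers are precisely its minimal elements. It then suffices to show every cover lies above a minimal one: a vertex $v\in W$ whose value $\sigma(v)$ exceeds the largest edge-weight $M$ of $G$ plays no role in the covering condition, so it may be deleted while staying a cover and strictly descending in $\preceq$; after finitely many such deletions all $\sigma$-values are $\leq M$, and inside that finite poset one can descend to a minimal element. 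This yields $\bigcap_{(W,\sigma)}P_{(W,\sigma)}=\bigcap_{(W,\sigma)\text{ min}}P_{(W,\sigma)}$.

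For irredundancy I would, for each minimal cover $(W,\sigma)$, produce a monomial in every other minimal component but not in $P_{(W,\sigma)}$, namely $m_{(W,\sigma)}=\prod_{v_i\in W}X_i^{\sigma(v_i)-1}\cdot\prod_{v_i\notin W}X_i^{M+1}$ (using $\sigma(v_i)\geq 1$ so the exponents make sense). It is clear that $m_{(W,\sigma)}\notin P_{(W,\sigma)}$, since each $X_i$ with $v_i\in W$ occurs to the power $\sigma(v_i)-1<\sigma(v_i)$. If some other minimal cover $(W',\sigma')$ also omitted $m_{(W,\sigma)}$, then $X_j^{\sigma'(v_j)}\nmid m_{(W,\sigma)}$ for all $v_j\in W'$; since $\sigma'(v_j)\leq M$ for a minimal cover, this forces $W'\subseteq W$, and then $\sigma'(v_j)\geq\sigma(v_j)$ for all $v_j\in W'$, i.e.\ $(W',\sigma')\preceq(W,\sigma)$; minimality of $(W,\sigma)$ would then give $(W',\sigma')=(W,\sigma)$, a contradiction. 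Hence $m_{(W,\sigma)}$ witnesses that $P_{(W,\sigma)}$ cannot be dropped from the intersection.

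The step I expect to be the real obstacle is the reduction to minimal covers, where one must confirm both that the notion of \emph{minimal} weighted $r$-path vertex cover coincides with minimality for the order $\preceq$ above and that this poset is well founded enough (via the bound $M$) for minimal covers to exist below every cover; this also supplies the finiteness implicit in the last intersection. Once that bookkeeping is in place everything else is formal — indeed, an alternative organization is to note that $I_r(G_\omega)$, being a monomial ideal, has a unique irredundant decomposition into irreducible monomial ideals, observe that every $P_{(W,\sigma)}$ is irreducible, and then identify the surviving components with the minimal covers by the same witness-monomial computation.
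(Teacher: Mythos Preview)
Your proof is correct and follows the same broad outline as the paper: verify $I_r(G_\omega)\subseteq P_{(W,\sigma)}$ for every cover, establish the reverse inclusion, descend to minimal covers, and check irredundancy. The organization differs in two places worth noting. For the reverse inclusion, the paper first proves the clean equivalence ``$I_r(G_\omega)\subseteq P_{(W,\sigma)}$ if and only if $(W,\sigma)$ is a weighted $r$-path vertex cover'' and then invokes the existence of an m-irreducible decomposition of $I_r(G_\omega)$ (together with the reference \cite[Theorem~3.5]{paulsen:eiwg}); you instead give a direct contrapositive by exhibiting, for each monomial $m\notin I_r(G_\omega)$, the explicit cover $(V,\sigma_m)$ with $\sigma_m(v_i)=b_i+1$ that omits $m$. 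For irredundancy, the paper again leans on the cited reference, whereas you construct the witness monomial $m_{(W,\sigma)}$ by hand. Your descent step (delete vertices with $\sigma(v)>M$, then use finiteness) is exactly the content of the paper's Lemmas~\ref{lem130721} and~\ref{prop130721}, specialized to $f=\max$; your use of the bound $M=\max_e\omega(e)$ is correct here precisely because $f=\max$, so that $\max\{\omega(e_1),\omega(e_2)\}\leq M$ at every interior vertex. What your approach buys is a fully self-contained argument that never appeals to the general theory of m-irreducible decompositions; what the paper's packaging buys is the reusable ``iff'' characterization in part~(a), which it cites again later.
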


(See Section~\ref{sec130827a} for definitions of terms like ``weighted $r$-path  vertex cover''.)
When $A$ is a field, this result yields a primary decomposition of $I_r(G_\omega)$.

In Section~\ref{sec130827d} we turn our attention to Cohen-Macaulayness of $I_r(G_\omega)$
when the underlying graph $G$ is a tree.
The main result of this section is the following, which
is proved in Theorem~\ref{thm130831a}.

\begin{intthm}\label{intthm130831a}
Assume that $G_{\omega}$ is a weighted tree and that $A$ is a field. Then the following conditions are equivalent:
\begin{enumerate}[\rm (i)]
\item\label{ithm130831a1} $I_{r}(G_\omega)$ is Cohen-Macaulay;
\item\label{ithm130831a2} $I_{r}(G_\omega)$ is m-unmixed; and
\item\label{ithm130831a3} 
there is a weighted tree $\Gamma_\mu$ and an $r$-path suspension $H_\lambda$ of $\Gamma_\mu$
such that $H_\lambda$ is obtained by pruning a sequence of $r$-pathless leaves from $G_\omega$ and
for all $v_iv_j\in E(\Gamma_\mu)$ we have $\omega(v_iv_j)\leq\min\{\omega(v_iy_{i,1}),\omega(v_jy_{j,1})\}$.
\end{enumerate}
\end{intthm}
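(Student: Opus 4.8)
The plan is to prove the cycle of implications (i)~$\Rightarrow$~(ii)~$\Rightarrow$~(iii)~$\Rightarrow$~(i), using throughout the primary decomposition of $I_{r}(G_\omega)$ provided by Theorem~\ref{intprop130725}. Since $A$ is a field, every associated prime of $I_{r}(G_\omega)$ is a monomial prime contained in $\m=(X_1,\dots,X_n)$, namely the radical of some $P_{(W,\sigma)}$ with $(W,\sigma)$ a minimal weighted $r$-path vertex cover; such a radical is generated by $\{X_i\mid v_i\in W\}$ and so has height $|W|$. Thus ``$I_{r}(G_\omega)$ is m-unmixed'' translates to the purely combinatorial assertion that all minimal weighted $r$-path vertex covers of $G_\omega$ have the same size. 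The implication (i)~$\Rightarrow$~(ii) is then immediate: a Cohen-Macaulay quotient $S/I_{r}(G_\omega)$ is equidimensional after localizing at $\m$, so all associated primes have the same height.

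For (iii)~$\Rightarrow$~(i), I would first observe that pruning an $r$-pathless leaf $v$ is harmless for the Cohen-Macaulay property: the variable $X_v$ occurs in no generator of the weighted $r$-path ideal, so $S/I_{r}(G_\omega)$ is a polynomial ring in one variable over the corresponding quotient for the pruned graph, and Cohen-Macaulayness passes in both directions. Hence it suffices to show that for an $r$-path suspension $H_\lambda$ of a weighted tree $\Gamma_\mu$ satisfying $\omega(v_iv_j)\leq\min\{\omega(v_iy_{i,1}),\omega(v_jy_{j,1})\}$ along every edge $v_iv_j$ of $\Gamma_\mu$, the ideal $I_{r}(H_\lambda)$ is Cohen-Macaulay. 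For this I would polarize: the polarization of $I_{r}(H_\lambda)$ is a squarefree monomial ideal, and the displayed weight inequality is precisely what is needed so that its Stanley--Reisner complex agrees, up to taking a cone, with the complex associated to the \emph{unweighted} $r$-path suspension of (a subdivision of) $\Gamma_\mu$. The latter complex is Cohen-Macaulay by Morey's characterization of Cohen-Macaulay $r$-path ideals of trees~\cite{morey:dcmppi} (cf.\ \cite{conca:msgililt}, and \cite{paulsen:eiwg} for $r=1$); since polarization and depolarization preserve the Cohen-Macaulay property, we are done.

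The substance of the theorem is (ii)~$\Rightarrow$~(iii), which I would prove by induction on $n=|V(G)|$. First prune a maximal sequence of $r$-pathless leaves from $G_\omega$; by the variable argument above, the resulting weighted tree $G'_\omega$ still has m-unmixed weighted $r$-path ideal, and $G'$ has no $r$-pathless leaves, so it is enough to realize $G'_\omega$ as an $r$-path suspension $H_\lambda$ of a weighted tree $\Gamma_\mu$ obeying the weight condition. Picking an endpoint $y$ of a longest path in the tree $G'$ and examining the unique maximal pendant path through $y$, the fact that $y$ lies on some $r$-path forces this pendant path to have length at least $r$, and hence identifies a full pendant $r$-path meeting the rest of $G'$ at a single vertex $v$. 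Now comparing the minimal weighted $r$-path vertex cover that selects the interior vertices of this pendant path against the one that selects $v$ together with its neighbors, the equality of cardinalities forced by m-unmixedness pins down two things: first, that $v$ must carry an entire family of pendant $r$-paths, so that removing all of them leaves a strictly smaller weighted tree $\Gamma_\mu$ whose weighted $r$-path ideal is again m-unmixed and to which the induction hypothesis applies; and second, the inequality $\omega(v_iv_j)\leq\min\{\omega(v_iy_{i,1}),\omega(v_jy_{j,1})\}$ for the edges of $\Gamma_\mu$, obtained by testing covers in which an interior vertex of $\Gamma_\mu$ is traded for the first vertex $y_{i,1}$ of its own pendant $r$-path and tracking the exponent function $\sigma$. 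Assembling the data returned by the induction produces the required $H_\lambda$ and the pruning sequence.

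The main obstacle, as usual for results of this type, is the combinatorial core of (ii)~$\Rightarrow$~(iii): making rigorous the statements ``a full pendant $r$-path is forced at $v$'' and ``the weight inequality is forced,'' and organizing the induction so that the pruning step, the choice of $v$, and the bookkeeping of the exponents $\sigma$ attached to minimal covers all mesh. Once that machinery is in place, (i)~$\Rightarrow$~(ii) is formal and (iii)~$\Rightarrow$~(i) reduces, via polarization, to the already-known unweighted statement.
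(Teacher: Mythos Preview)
Your outline for (i)$\Rightarrow$(ii) matches the paper, and your reduction ``pruning $r$-pathless leaves preserves Cohen--Macaulayness'' is exactly Lemma~\ref{lem130726a}. However, the remaining two implications diverge from the paper, and (iii)$\Rightarrow$(i) as you describe it has a genuine gap.

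For (iii)$\Rightarrow$(i) you claim that the polarization of $I_r(H_\lambda)$ is, up to a cone, the Stanley--Reisner ideal of the unweighted $r$-path suspension of a subdivision of $\Gamma_\mu$. This does not hold: the generators of any unweighted $r$-path ideal all have degree $r+1$, whereas the polarized generators of $I_r(H_\lambda)$ have degrees $\sum_j e_{i_j}$ that depend on the weights along each path and are in general unequal to one another. Already a single $r$-whisker with nontrivial weights polarizes to a principal ideal generated in degree $>r+1$, which is not the $r$-path ideal of any graph, and taking a cone does not repair this mismatch. The paper's argument (Proposition~\ref{lem130726}) avoids this entirely by a different device: it shows that the polarization $\widetilde I$ of $I_r(H_\lambda)$ is \emph{also} the polarization of an ideal $J$ in the small polynomial ring $T=A[X_{1,1},\dots,X_{\beta,1}]$, and that $J$ contains a pure power of every variable (one coming from each $r$-whisker), so that $T/J$ is artinian and hence trivially Cohen--Macaulay. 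The weight inequality $\omega(v_iv_j)\leq\min\{\omega(v_iy_{i,1}),\omega(v_jy_{j,1})\}$ is used exactly to guarantee that every polarized $X$-variable arising from an edge of $\Gamma_\mu$ already appears among those arising from the whiskers, so that this depolarization down to $T$ is well-defined.

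For (ii)$\Rightarrow$(iii), your plan to run a direct induction on $|V|$ and extract the suspension structure by hand is more ambitious than what the paper actually does. The paper first passes to the underlying unweighted tree: Lemma~\ref{lem130725} shows that m-unmixedness of $I_r(G_\omega)$ forces m-unmixedness of the unweighted $I_r(G)$, and then~\cite[Theorem~3.8 and Remark~3.9]{morey:dcmppi} is invoked as a black box to conclude that the pruned tree $H$ is an $r$-path suspension. Only after the combinatorial skeleton is fixed does the paper extract the weight inequality, via one explicit comparison of weighted covers (Proposition~\ref{prop130725b}). Your inductive scheme could perhaps be made to work, but the assertions ``removing the pendant $r$-paths at $v$ leaves a smaller m-unmixed tree'' and ``equal cardinalities force a full family of pendant $r$-paths at $v$'' are essentially the content of the cited Campos--Gunderson--Morey--Paulsen--Polstra theorem and would require real argument rather than the sketch you give.
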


Note that this shows that Cohen-Macaulayness of path ideals of weighted trees is independent of the characteristic of $A$.

Section~\ref{sec130827c} is devoted to Cohen-Macaulayness of $I_r(K^n_\omega)$,
where $G=K^n$ is complete, i.e., an $n$-clique.
Note that it is straightforward to show that the edge ideal $I_1(K^n_\omega)$ is always Cohen-Macaulay,
since it is unmixed of dimension 1. On the other hand, the case of $I_r(G_\omega)$ with $r\geq 2$
is  more complicated. We deal with the case $r=2$, the proof of which takes up most of Section~\ref{sec130827c};
see Theorems~\ref{thm130728} and~\ref{thm140730}.

\begin{intthm}\label{inthm130728}
Assume that $n\geq 3$, and let $K^n_\omega$ be a weighted $n$-clique.
Assume that $A$ is a field.
Then the ideal $I_{2}(K^n_\omega)$ is  Cohen-Macaulay if and only if
every induced weighted sub-3-clique $K^3_{\omega'}$ of $K^n_\omega$ has $I_{2}(K^{3}_{\omega'})$ Cohen-Macaulay.
\end{intthm}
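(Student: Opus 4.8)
The plan is to reduce the Cohen-Macaulay property of $I_2(K^n_\omega)$ to a local condition on $3$-cliques, using the primary decomposition of Theorem~\ref{intprop130725} (i.e., Theorem~\ref{prop130725}) together with the fact that, since $G=K^n$ is complete, any path $v_iv_jv_k$ of length $2$ involves an arbitrary triple of vertices, so the generators of $I_2(K^n_\omega)$ are exactly the monomials $X_i^{e_i}X_j^{e_j}X_k^{e_k}$ where $e_i=\omega(v_iv_j)$, $e_k=\omega(v_jv_k)$, $e_j=\max\{\omega(v_iv_j),\omega(v_jv_k)\}$, over all ordered triples (equivalently, over all choices of a ``center'' $v_j$ among the three). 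Thus each weighted $2$-path of $K^n_\omega$ lies entirely inside a unique induced sub-$3$-clique, and conversely $I_2(K^n_\omega)$ is determined by the family of weighted $3$-cliques $K^3_{\omega'}$ induced on triples $\{v_i,v_j,v_k\}\subseteq V$.

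First I would establish the easy direction: if $I_2(K^n_\omega)$ is Cohen-Macaulay, then every induced $I_2(K^3_{\omega'})$ is Cohen-Macaulay. For this I would use that Cohen-Macaulayness implies m-unmixedness (the equivalence (i)$\Leftrightarrow$(ii) in the tree case is not available here, but the implication Cohen-Macaulay $\Rightarrow$ unmixed holds generally), and show that a weighted $2$-path vertex cover of an induced sub-$3$-clique extends to one of $K^n_\omega$ of controlled dimension; restricting a minimal prime of $I_2(K^n_\omega)$ to the three variables $X_i,X_j,X_k$ should recover a minimal prime of $I_2(K^3_{\omega'})$, forcing equidimensionality of the latter. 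Alternatively, and probably more cleanly, I would realize $S/I_2(K^3_{\omega'})$ (up to a polynomial extension in the remaining variables) as a localization or a quotient by a regular sequence of $S/I_2(K^n_\omega)$ — the monomial structure should make $X_\ell$ for $\ell\notin\{i,j,k\}$ behave like indeterminates modulo the relevant component — and invoke that Cohen-Macaulayness passes to such quotients.

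The substantive direction is the converse, and this is where I expect the main obstacle. The strategy is to show that the hypothesis ``every induced $I_2(K^3_{\omega'})$ is Cohen-Macaulay'' forces enough structure on $\omega$ that $I_2(K^n_\omega)$ becomes unmixed, and then to upgrade unmixedness to Cohen-Macaulayness. Concretely, I would first use the $r=2$ analysis underlying Theorems~\ref{thm130728} and~\ref{thm140730} to translate ``$I_2(K^3_{\omega'})$ Cohen-Macaulay'' into an explicit numerical condition on the three edge-weights of each triangle (a comparison of maxima, analogous to the inequality $\omega(v_iv_j)\le\min\{\omega(v_iy_{i,1}),\omega(v_jy_{j,1})\}$ appearing in Theorem~\ref{intthm130831a}). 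Granting that all triangles satisfy this condition simultaneously, I would show the weighted $2$-path vertex covers of $K^n_\omega$ all have the same size — the combinatorics here reduces to: a minimal vertex cover must hit every $2$-path, and the local triangle condition guarantees that the ``cost'' of covering is the same no matter which vertices/powers one chooses — giving m-unmixedness via Theorem~\ref{prop130725}. Finally, to get Cohen-Macaulayness from unmixedness I would either exhibit an explicit system of parameters and a Koszul-type regular sequence, or (more likely) argue by induction on $n$: deleting a vertex from $K^n$ gives $K^{n-1}$, all of whose induced $3$-cliques still satisfy the hypothesis, so by induction $I_2(K^{n-1}_{\omega})$ is Cohen-Macaulay, and one relates $S/I_2(K^n_\omega)$ to the $(n{-}1)$-clique case via a short exact sequence coming from a vertex-splitting (a ``deletion/contraction''-style sequence on the last variable $X_n$), checking that the connecting maps preserve depth. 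The delicate point throughout will be bookkeeping the exponents $e_i$: unlike the unweighted case, the powers appearing in the generators depend on which edge is the ``heavier'' one, so the vertex-cover combinatorics is genuinely weighted, and the induction's short exact sequence must track these exponents carefully — I expect verifying the depth count in that sequence, i.e., that the hypothesis rules out the one bad configuration obstructing Cohen-Macaulayness, to be the crux of the argument.
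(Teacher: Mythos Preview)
Your proposal has the difficulty of the two directions reversed, and the direction you call ``easy'' contains a genuine gap.

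For the implication ``$I_2(K^n_\omega)$ Cohen--Macaulay $\Rightarrow$ every induced $I_2(K^3_{\omega'})$ Cohen--Macaulay'', neither of your approaches works. The quotient approach fails because the variables $X_\ell$, $\ell\notin\{i,j,k\}$, are \emph{not} a regular sequence on $S/I_2(K^n_\omega)$: by Proposition~\ref{prop130728a} we have $\dim(S/I_2(K^n_\omega))=2$, and the quotient $A[X_i,X_j,X_k]/I_2(K^3_{\omega'})$ also has dimension $2$, so killing $n-3\geq 1$ elements does not drop the dimension. Localization fares no better, since inverting $X_\ell$ collapses every generator of $I_2(K^n_\omega)$ that involves $X_\ell$, producing an ideal strictly larger than the extension of $I_2(K^3_{\omega'})$. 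Your unmixedness/restriction approach also fails: Example~\ref{ex140802a} exhibits a weighted $4$-clique with $I_2(K^4_\omega)$ unmixed whose induced sub-$3$-clique on $v_1,v_2,v_3$ is mixed (it has minimal covers of sizes $1$ and $2$). So ``minimal primes restrict to minimal primes'' is false here, and unmixedness alone does not descend.

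The paper proves this direction by the contrapositive, and this is where the real work lies. If a bad sub-$3$-clique has three distinct edge weights $a<b<c$, one builds directly a minimal weighted $2$-path vertex cover of size $n-1$, forcing mixedness (Proposition~\ref{prop140729}). The delicate case is $a<b=c$: then $I_2(K^n_\omega)$ may well be unmixed (as in Example~\ref{ex140802a}), and one must show it is still not Cohen--Macaulay. The paper locates an induced sub-$4$-clique of a specific shape (Proposition~\ref{prop140730}), splits the primary decomposition as $I=I_1\cap I_*$ according to whether the component involves $X_1$, and uses the Mayer--Vietoris sequence
\[
0\to S/I\to S/I_1\oplus S/I_*\to S/(I_1+I_*)\to 0
\]
to show $\depth(S/I)=1<2=\dim(S/I)$, by exhibiting the maximal ideal as an associated prime of $I_1+I_*$ (Theorem~\ref{thm140730}).

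For the other direction, your inductive sketch with a vertex-deletion short exact sequence is essentially the paper's argument (Theorem~\ref{thm130728}): the sequence is $0\to X_1^aR\to R\to R/X_1^aR\to 0$ with $a$ the minimum edge weight, and the crux is computing the colon $(I:_SX_1^a)$ to control $\depth(X_1^aR)$. Your intermediate step of first proving unmixedness is not needed; the paper goes straight to the depth count.
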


As in Theorem~\ref{intthm130831a}, this shows that the Cohen-Macaulay property is characteristic-independent for cliques.
Unlike Theorem~\ref{intthm130831a}, though, it does not say that Cohen-Macaulayness is equivalent to unmixedness.
See Example~\ref{ex140802a} for a weighted 4-clique that is unmixed but not Cohen-Macaulay.

Finally, we note that in Sections~\ref{sec130827a} and~\ref{sec130827b} we deal with a more general situation than the one described in this
introduction. It  uses the following.

\begin{notation*}
Throughout this paper, $G_{\omega}$ is a weighted graph.
Let $\mathcal{P}_2(\bbn)$ denote the set of subsets $U\subset\bbn$ such that $|U|\leq 2$.
Fix a function $f\colon\mathcal{P}_2(\bbn)\to\bbn$, and write
$f\{a,b\}$ in place of $f(\{a,b\})$. For instance, $f$ may be $\max, \min, \gcd,$ or $ \text{lcm}$.
\end{notation*}

\section{Weighted Graphs and Weighted $r$-Path  Vertex Covers}
\label{sec130827a}

In this section, we develop the graph theory used in the rest of the paper, beginning with the unweighted situation.

\begin{defn}
An \emph{$r$-path} in $G$ is a sequence $v_{i_1}\ldots v_{i_{r+1}}$ of distinct vertices in $G$ such that the pair
$v_{i_j}v_{i_{j+1}}$ is an edge in $G$ for $j=1,\ldots,r$. 
\end{defn}

\begin{ex}\label{ex130827a}
Let $G$ be the following tree
$$\xymatrix{
v_1\ar@{-}[r]&v_2\ar@{-}[r]&v_3\ar@{-}[r]\ar@{-}[d]&v_4\ar@{-}[r]&v_5\\
&&v_6&&
}$$
and consider the case $r=3$. Then $G$ has four distinct 3-paths, namely $v_1v_2v_3v_4$, $v_1v_2v_3v_6$, $v_2v_3v_4v_5$, and $v_6v_3v_4v_5$.
\end{ex}

The next notion is key for Theorem~\ref{intprop130725} and the rest of the paper.

\begin{defn}\label{defn130617d}
An \emph{r-path vertex cover} of $G$ is a subset $W\subseteq V$ such that for any path $v_{i_1}\dots v_{i_{r+1}}$ of length $r$ in $G$ we have $v_{i_j}\in W$ for some $j$.
In this case, we write that 
$v_{i_j}$ ``covers'' the path.

An $r$-path vertex cover of $G$ is \emph{minimal} if it is minimal with respect to containment, that is,
it does not properly contain another $r$-path vertex cover of $G$.
\end{defn}

For instance, consider the tree $G$ from Example~\ref{ex130827a} with $r=3$.
Then the singleton $\{v_3\}$ is a 3-path vertex cover, since each  3-path in $G$ is covered by $v_3$.
We represent this diagrammatically, as follows.
$$\xymatrix{
v_1\ar@{-}[r]&v_2\ar@{-}[r]&*+[F]{v_3}\ar@{-}[r]\ar@{-}[d]&v_4\ar@{-}[r]&v_5\\
&&v_6&&
}$$
Moreover, this is a minimal 3-path vertex cover of $G$ since $\emptyset$ is not a 3-path vertex cover.
On the other hand, no other singleton is a 3-path vertex cover.
(For instance, the vertex $v_1$ does not cover the path $v_6v_3v_4v_5$.)
However, the set $\{v_1,v_5\}$ is another minimal 3-path vertex cover of $G$.

For graphs represented diagrammatically, we use the diagram for a visual representation of the weight function $\omega$
by decorating each edge $v_iv_j$ with the weight $\omega(v_iv_j)$, as follows.

\begin{ex}\label{ex130827c}
A particular weight function $\omega$ on the tree $G$ from Example~\ref{ex130827a} is represented in the following diagram.
$$\xymatrix{
v_1\ar@{-}[r]^2&v_2\ar@{-}[r]^{1}&v_3\ar@{-}[r]^2\ar@{-}[d]_3&v_4\ar@{-}[r]^2&v_5\\
&&v_6&&
}$$
For instance, this means that $\omega(v_3v_6)=3$.
\end{ex}

As one may expect, the following definition provides a combinatorial description of   decompositions of ideals constructed from $G_{\omega}$.
See Section~\ref{sec130827b}.

\begin{defn}\label{defn130619b}
Set $\Lambda=\{(W,\sigma)\mid W\subseteq V \text{ and } \sigma:W\rightarrow\bbn\}$.
For each $(W,\sigma)\in\Lambda$, we set $|(W,\sigma)|=|W|$.

An \emph{$f$-weighted $r$-path  vertex cover} of a weighted graph $G_\omega$ is an ordered pair $(W,\sigma)\in\Lambda$ such that for every path 
$v_{i_1}\dots v_{i_{r+1}}$ of length $r$ in $G$, there exists an index $j$ such that $v_{i_j}\in W$ and  one of the  following holds:
\begin{enumerate}[(1)]
\item if $j=1$, then $\sigma(v_{i_j})\leq\omega(v_{i_1}v_{i_2})$;
\item if $j=r+1$, then $\sigma(v_{i_j})\leq\omega(v_{i_r}v_{i_{r+1}})$; or
\item if $1<j\leq r$, then $\sigma(v_{i_j})\leq f\{\omega(v_{i_{j-1}}v_{i_j}), \omega(v_{i_j}v_{i_{j+1}})\}.$
\end{enumerate}
(In particular, when $(W,\sigma)$ is an $f$-weighted $r$-path  vertex cover of $G_\omega$, the set $W$ is an $r$-path  vertex cover of the unweighted graph $G$.)
The number $\sigma(v_{i_j})$ is the \emph{weight} of $v_{i_j}$.  When $v_{i_j}$ satisfies one of the above conditions, we write that it \emph{covers} the path 
$v_{i_1}\dots v_{i_{r+1}}$.
When $f=\max$, we write that $(W,\sigma)$ is a  \emph{weighted $r$-path  vertex cover} of $G_\omega$.
\end{defn}

We represent $f$-weighted $r$-path vertex covers algebraically and diagrammatically, as follows.

\begin{ex}\label{ex130827d}
Consider the weighted tree $G_{\omega}$ from Example~\ref{ex130827c} with $r=3$
and with $f=\max$.
The set $\{v_3\}$ is a 3-path vertex cover of $G$,
and the function $\sigma\colon\{v_3\}\to \bbn$ given by $\sigma(v_3)=2$ yields a weighted 3-path  vertex cover of $G_{\omega}$.
We represent this algebraically and diagrammatically,
by decorating the vertex $v_3$ with the weight $\sigma(v_3)=2$, as follows.
$$
(W,\sigma)=\{v_3^2\}\qquad\qquad\text{and}\qquad\qquad
\xymatrix{
v_1\ar@{-}[r]^2&v_2\ar@{-}[r]^{1}&*+[F]{v_3^2}\ar@{-}[r]^2\ar@{-}[d]_3&v_4\ar@{-}[r]^2&v_5\\
&&v_6&&
}$$
By definition, a function $\sigma'\colon\{v_3\}\to \bbn$  yields a weighted 3-path  vertex cover of $G_{\omega}$ if and only if $\sigma'(v_3)\leq 2$.
Similarly, a decorated set  $\{v_1^{d_1},v_5^{d_5}\}$   describes a weighted 3-path  vertex cover of $G_{\omega}$ if and only if $d_1,d_5\leq 2$.
\end{ex}

\begin{defn}\label{defn130619c}
Given  $(W,\sigma),(W',\sigma')\in\Lambda$, we write $(W',\sigma')\leq(W,\sigma)$ if $W' \subseteq W$ and for all 
$v_i\in W'$ we have $\sigma(v_i)\leq\sigma'(v_i)$.
Naturally, we write $(W',\sigma')<(W,\sigma)$ whenever we have $(W',\sigma')\leq(W,\sigma)$ and $(W',\sigma')\neq(W,\sigma)$.
An $f$-weighted $r$-path  vertex cover $(W,\sigma)$ is \emph{minimal}
if it is minimal  with respect to this ordering, that is,
if there does not exist another $f$-weighted $r$-path  vertex cover $(W',\sigma')$ such that $(W',\sigma')<(W,\sigma)$. 
\end{defn}

\begin{ex}\label{ex130827e}
Consider the weighted tree $G_{\omega}$ from Example~\ref{ex130827c} with $r=3$
and with $f=\max$.
The decorated sets $\{v_3^2\}$ and $\{v_1^{2},v_5^{2}\}$ are minimal weighted 3-path  vertex covers of $G_{\omega}$.
\end{ex}

\begin{ex}\label{rmk130827a}
Given an $r$-path vertex cover $W$ of $G$, it is straightforward to show that the constant function $\sigma\colon W\to\bbn$ with $\sigma(v)=1$
provides an $f$-weighted $r$-path vertex cover $(W,\sigma)$.
\end{ex}

The next two results are for use in the proof of Theorem~\ref{intprop130725}.

\begin{lem}\label{lem130721}
Assume that for all $j\in\bbn$ we have an $f$-weighted $r$-path  vertex cover 
$(W_j,\sigma_j)=\{v_{i_1}^{a_1},\dots,v_{i_p}^{a_p},v_{i_{p+1}}^{b_j}\}$ of $G_\omega$.  If the sequence $\{b_1,b_2,\ldots\}$ is unbounded, then 
$(W,\sigma)=\{v_{i_1}^{a_1},\dots,v_{i_p}^{a_p}\}$ is also an $f$-weighted $r$-path  vertex cover of $G_\omega$.
\end{lem}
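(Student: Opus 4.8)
The plan is to argue by contradiction, using the hypothesis that $\{b_1, b_2, \ldots\}$ is unbounded to defeat any path that could possibly witness a failure of $(W,\sigma)$ being an $f$-weighted $r$-path vertex cover. First I would suppose that $(W,\sigma) = \{v_{i_1}^{a_1}, \dots, v_{i_p}^{a_p}\}$ is \emph{not} an $f$-weighted $r$-path vertex cover of $G_\omega$. Then there is some $r$-path $\pi = v_{k_1}\dots v_{k_{r+1}}$ in $G$ that is not covered by $(W,\sigma)$: for every index $j$ with $v_{k_j} \in W$, the weight $\sigma(v_{k_j})$ fails the relevant one of conditions (1)--(3) of Definition~\ref{defn130619b} for $\pi$. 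Note that $W \subseteq W_m$ for every $m$, and on $W$ the functions $\sigma$ and $\sigma_m$ agree, so the \emph{same} failure occurs for $(W_m, \sigma_m)$ at every vertex of $\pi$ lying in $W$; the only vertex of $W_m$ not in $W$ is $v_{i_{p+1}}$, with weight $b_m$.

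The key step is then to observe that since each $(W_m, \sigma_m)$ \emph{is} an $f$-weighted $r$-path vertex cover, $\pi$ must be covered by $(W_m, \sigma_m)$, and by the previous paragraph the covering vertex can only be $v_{i_{p+1}}$; in particular $v_{i_{p+1}}$ is a vertex of $\pi$, say $v_{i_{p+1}} = v_{k_{j_0}}$ for a \emph{fixed} index $j_0$ depending only on $\pi$ (not on $m$). Covering at this position means that $b_m = \sigma_m(v_{i_{p+1}})$ satisfies one of the inequalities: $b_m \leq \omega(v_{k_1}v_{k_2})$ if $j_0 = 1$, or $b_m \leq \omega(v_{k_r}v_{k_{r+1}})$ if $j_0 = r+1$, or $b_m \leq f\{\omega(v_{k_{j_0-1}}v_{k_{j_0}}), \omega(v_{k_{j_0}}v_{k_{j_0+1}})\}$ otherwise. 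In all three cases the right-hand side is a single fixed natural number determined by $\pi$, so $b_m$ is bounded above by that number for every $m \in \bbn$. This contradicts the assumption that $\{b_1, b_2, \ldots\}$ is unbounded, and the lemma follows.

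I expect the main (minor) obstacle to be bookkeeping rather than substance: one must be careful that the position $j_0$ at which $v_{i_{p+1}}$ sits in $\pi$ is genuinely independent of $m$ — it is, because $\pi$ itself is fixed once chosen and $v_{i_{p+1}}$ is a fixed vertex — and that one handles the boundary cases $j_0 = 1$ and $j_0 = r+1$ alongside the generic case $1 < j_0 \leq r$. One should also note the degenerate possibility $p = 0$, in which case $(W,\sigma)$ is the empty cover and the argument still goes through verbatim (the "same failure on $W$" clause is vacuous, so $v_{i_{p+1}}$ must cover $\pi$ for every $m$). No deeper input is needed; the earlier definitions and the unboundedness hypothesis do all the work.
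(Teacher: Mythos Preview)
Your proof is correct and follows essentially the same idea as the paper's: a sufficiently large weight on $v_{i_{p+1}}$ cannot satisfy any of the covering inequalities, so the remaining weighted vertices $v_{i_1}^{a_1},\dots,v_{i_p}^{a_p}$ must already cover every $r$-path. The only cosmetic difference is that the paper argues directly---using finiteness of $G$ to pick a single $j$ with $b_j$ exceeding every edge weight and every relevant $f$-value, so that $v_{i_{p+1}}^{b_j}$ covers nothing---whereas you phrase the same observation contrapositively by fixing an uncovered path $\pi$ and extracting a uniform bound on the $b_m$.
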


\begin{proof}
By assumption, there exists an index $j$ such that $b_j$
is greater than each
of the following numbers: 
$\omega(v_pv_{q})$ for each edge $v_pv_{q}$ in $G$,
and $f(\omega(v_{i}v_j),\omega(v_jv_k))$ for each 2-path $v_iv_jv_k$ in $G$.
It follows that the weighted vertex $v_{i_{p+1}}^{b_j}$ does not cover any $f$-weighted path in $G_\omega$.
Since $(W_j,\sigma_j)$ is an $f$-weighted $r$-path  vertex cover of $G_\omega$, 
it follows that $(W,\sigma)$ is an $f$-weighted $r$-path  vertex cover of $G_\omega$.
\end{proof}

\begin{lem}\label{prop130721}
For every $f$-weighted $r$-path  vertex cover $(W,\sigma)$ of $G_\omega$ there is a minimal $f$-weighted
$r$-path  vertex cover $(W'',\sigma'')$ of $G_\omega$ with  $(W'',\sigma'')\leq(W,\sigma)$.
\end{lem}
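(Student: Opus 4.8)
The plan is to extract a minimal cover from the given one by a descent argument, handling the two ways a cover can fail to be minimal: one can shrink the set $W$ (remove vertices) or increase some weight values $\sigma(v_i)$. First I would observe that both moves are controlled by finiteness: the underlying graph $G$ has only finitely many vertices, and—crucially—for any fixed vertex $v_i$, the weight $\sigma(v_i)$ in an $f$-weighted $r$-path vertex cover can only usefully take finitely many values. Indeed, by the estimate used in Lemma~\ref{lem130721}, if $\sigma(v_i)$ exceeds every number of the form $\omega(v_pv_q)$ (for edges) and $f\{\omega(v_iv_j),\omega(v_jv_k)\}$ (for $2$-paths through $v_i$), then the weighted vertex $v_i^{\sigma(v_i)}$ covers no $f$-weighted $r$-path at all, so $v_i$ is doing no work and can be deleted from $W$. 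Let $N$ be one more than the maximum of all these finitely many numbers; then without loss of generality every vertex in $W$ has weight at most $N$.

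With that bound in hand, I would argue as follows. Starting from $(W,\sigma)$, consider the set of all $f$-weighted $r$-path vertex covers $(W',\sigma')\leq(W,\sigma)$. This set is nonempty (it contains $(W,\sigma)$), and every member $(W',\sigma')$ satisfies $W'\subseteq W$ and $\sigma'(v_i)\leq N$ for all $v_i\in W'$ (the first because $W'\subseteq W$, the second because if some $\sigma'(v_i)>N$ then, as above, $v_i$ covers nothing and $(W'\setminus\{v_i\},\sigma'|_{W'\setminus\{v_i\}})$ is a strictly smaller $f$-weighted $r$-path vertex cover, so $(W',\sigma')$ cannot be a candidate minimal element—more precisely I would just note we may replace it by the smaller one and continue). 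Hence the set of candidates is finite: there are at most $2^{|W|}$ choices of $W'$, and for each at most $N^{|W'|}$ choices of $\sigma'$. A finite nonempty partially ordered set has a minimal element; any $(W'',\sigma'')$ minimal in this finite set with respect to the order of Definition~\ref{defn130619c} is then a minimal $f$-weighted $r$-path vertex cover of $G_\omega$ in the sense of Definition~\ref{defn130619c}, since any $(W''',\sigma''')<(W'',\sigma'')$ would again satisfy $(W''',\sigma''')\leq(W,\sigma)$ and hence lie in our finite set, contradicting minimality. This gives the desired $(W'',\sigma'')\leq(W,\sigma)$.

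Alternatively, and perhaps more in the spirit of Lemma~\ref{lem130721}, one could run an explicit two-phase descent: first repeatedly delete any vertex of $W$ whose assigned weight is too large (using Lemma~\ref{lem130721} or the observation above) until no such vertex remains; then among the finitely many $f$-weighted $r$-path vertex covers supported on subsets of the resulting set and with weights bounded by $N$, pick one that is $\leq(W,\sigma)$ and is minimal. I would present the finite-poset version as the clean argument.

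The main obstacle—though it is more a bookkeeping point than a genuine difficulty—is making precise that "increasing weights" is a bounded operation, i.e., justifying the bound $N$ and confirming that the order in Definition~\ref{defn130619c} (where $(W',\sigma')\leq(W,\sigma)$ means $W'\subseteq W$ and $\sigma(v_i)\leq\sigma'(v_i)$) is compatible with it: going down in the order means possibly shrinking the vertex set while possibly raising weights, and one must check that raising a weight past $N$ is never necessary because it kills the vertex's covering ability, so the effective search space is finite. Once that is pinned down, minimality of a minimal element of a finite poset is immediate.
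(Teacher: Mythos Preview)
Your approach is correct and differs from the paper's. The paper runs an explicit two-phase construction: first delete removable vertices one at a time (this terminates since $W$ is finite), then for each remaining vertex in turn raise its weight to the maximal value allowed, invoking Lemma~\ref{lem130721} to guarantee such a bound exists. Your argument instead packages the same boundedness observation into a finite-poset minimal-element statement; this is cleaner conceptually but non-constructive, whereas the paper's version actually exhibits $(W'',\sigma'')$. One point to tighten: the set of \emph{all} covers $(W',\sigma')\leq(W,\sigma)$ is not literally finite, since $\sigma'$-values are unbounded above; what is finite is the subset with all weights at most $N$. To conclude that a minimal element $(W'',\sigma'')$ of this finite subset is globally minimal, you must rule out a cover $(W''',\sigma''')<(W'',\sigma'')$ with some $\sigma'''(v_i)>N$. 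The fix (which you hint at) is that deleting from $(W''',\sigma''')$ every vertex whose weight exceeds $N$---each of which covers no $r$-path, as you observed---yields a cover in the finite subset that is still strictly below $(W'',\sigma'')$, a contradiction. With that sentence added, your argument is complete.
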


\begin{proof}
If $(W,\sigma)$ is a minimal $f$-weighted $r$-path   vertex cover then we are done.  If $(W,\sigma)$ is not  minimal, then either there is a $v_i\in W$ that can be removed or for some $v_i\in W$ the function $\sigma(v_i)$ can be increased.  In the first case, remove vertices from $W$ until the removal of one more vertex creates a path without a vertex to cover it.  This process must terminate in finitely many steps because $W$ is finite.  Let us denote our new $f$-weighted $r$-path  vertex cover as $(W', \sigma')$.  If no vertices are removed, then $(W,\sigma)=(W',\sigma')$.

Lemma~\ref{lem130721} shows that each vertex  $v_i\in W'$ has a bound beyond which one cannot increase the weight on $v_i$ without losing the
$f$-weighted $r$-path   vertex covering property, assuming the weights on the other vertices are held constant. 
In sequence, increase the weight of each vertex to such a bound.
Denote the new ordered pair $(W'',\sigma'')$.  Then, by construction, $(W'',\sigma'')$ is a minimal $f$-weighted $r$-path vertex cover such that $(W'',\sigma'')\leq(W,\sigma)$, and we are done.
\end{proof}

The next result uses $f=\max$.

\begin{lem}\label{lem130902az}
Every minimal weighted $r$-path vertex cover  of $G_\omega$ has cardinality at most $n-1$.
\end{lem}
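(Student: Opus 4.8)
The plan is to show that the full vertex set $V$ is never a \emph{minimal} weighted $r$-path vertex cover, from which the bound $|(W,\sigma)|\leq n-1$ follows immediately, since any minimal cover $(W,\sigma)$ with $W=V$ would contradict minimality. So assume toward a contradiction that $(V,\sigma)$ is a minimal weighted $r$-path vertex cover of $G_\omega$.

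First I would dispose of a degenerate case: if $G$ has no $r$-path at all, then every pair $(W,\sigma)$ with $W\subseteq V$ is vacuously an $r$-path vertex cover, so the minimal ones are exactly those with $W=\emptyset$; in particular $V$ is not minimal unless $n=0$, which is excluded by the standing assumption $n\geq 1$. (Actually when $n\ge 1$ and there are no $r$-paths, the only minimal cover is $(\emptyset,\sigma_\emptyset)$, which has cardinality $0\le n-1$.) So from now on assume $G$ has at least one $r$-path, hence at least one edge, hence $n\geq 2$.

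Now pick any vertex $v_i$ that is an endpoint of some edge. The key claim is that $(V\ssm\{v_i\},\sigma|_{V\ssm\{v_i\}})$ is still a weighted $r$-path vertex cover of $G_\omega$, OR that some weight $\sigma(v_j)$ can be strictly increased — either way contradicting minimality via Definition~\ref{defn130619c}. To make this clean, I would instead argue as follows: choose $v_i$ to be a vertex lying on \emph{no} $r$-path of $G$ if one exists — then removing it from $V$ clearly preserves the covering property and contradicts minimality. If no such vertex exists, then every vertex lies on some $r$-path; in that case I want to find a single vertex whose removal is still harmless, or raise a weight. The cleanest route: among all vertices, since each minimal cover's weights are ``maximal'' by Lemma~\ref{prop130721}, consider a leaf or more generally the structure of $r$-paths; but the simplest uniform argument is to invoke Lemma~\ref{lem130721}. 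Take any vertex $v_{p+1}$ on an edge, and observe that if the weight $\sigma(v_{p+1})$ in the minimal cover is already so large that $v_{p+1}^{\sigma(v_{p+1})}$ covers no $f$-weighted $r$-path (which happens once $\sigma(v_{p+1})$ exceeds all $\omega$ and all $\max\{\omega,\omega\}$ values — finitely many), then by Lemma~\ref{lem130721}, $V\ssm\{v_{p+1}\}$ with the restricted weights is a cover, contradicting minimality of $W=V$. Otherwise $\sigma(v_{p+1})$ is bounded by $\max_{e\in E}\omega(e)$, and then since it covers at least one $r$-path through it with a specific bound, the coverage structure shows some other vertex on each such $r$-path can take over — here I would use that along any $r$-path $v_{i_1}\cdots v_{i_{r+1}}$, if $v_{p+1}$ is the only covering vertex then we may raise $\sigma$ on an adjacent high-weight vertex; this needs a short case analysis on the position of $v_{p+1}$.

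I expect the main obstacle to be the second branch of the argument — when every vertex lies on an $r$-path and no weight is ``trivially too large.'' The honest and cleanest fix is to observe that in a minimal cover we can, without loss, assume all weights are at their Lemma~\ref{prop130721} maxima; then pick any edge $v_iv_j$ and note that on \emph{every} $r$-path, the interior vertices get weight bounded by a $\max$ of two edge weights while endpoints get a single edge weight, so a direct combinatorial check — essentially: remove a vertex of minimal ``usefulness'' and show the path it was covering is still covered because $r\ge 1$ forces each $r$-path to contain at least two vertices — yields that $W=V$ is redundant whenever $n\ge 2$. I would structure the final write-up around this last observation, handling $r=1$ (where each "1-path" is an edge with two vertices, one of which suffices) and $r\ge 2$ uniformly by noting each $r$-path has $r+1\ge 2$ vertices so dropping one of them still leaves a candidate cover vertex, and then checking the weight conditions transfer — this is the step that requires care but not deep ideas.
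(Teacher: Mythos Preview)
Your proposal has a genuine gap: you never identify \emph{which} vertex to remove, and without the right choice the weight check does not close. The paper's proof picks the vertex $v_n$ with the \emph{largest} value $\sigma(v_n)=e_n$ (after ordering $e_1\leq\cdots\leq e_n$). Suppose $v_n$ were the unique vertex covering some $r$-path $p$. Look at the two neighbours of $v_n$ along $p$, say with incident edge weights $a$ and $b$ at $v_n$. Because those neighbours fail to cover $p$, their $\sigma$-values strictly exceed the relevant edge bounds, giving $e_{i_{r-1}}>a$ and $e_{i_r}>b$. Because $v_n$ \emph{does} cover $p$ and $f=\max$, we get $e_n\leq\max\{a,b\}$. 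Combining, $e_n\leq\max\{a,b\}<\max\{e_{i_{r-1}},e_{i_r}\}\leq e_n$, a contradiction. The endpoint case is similar but easier.

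Your sketch picks an arbitrary vertex on an edge, then tries to argue either its weight is ``too large to cover anything'' (so Lemma~\ref{lem130721} deletes it) or else ``some other vertex on each path can take over.'' The second branch is exactly the hard part, and your suggestion to ``raise $\sigma$ on an adjacent high-weight vertex'' goes in the wrong direction: to contradict minimality of $(V,\sigma)$ you must either delete a vertex or \emph{increase} a weight while remaining a cover, and raising a weight elsewhere does not help you delete $v_{p+1}$. The observation that each $r$-path has at least two vertices is not enough on its own, because the other vertices may carry $\sigma$-values too large to satisfy the covering inequality for that particular path. The extremal choice (maximum $\sigma$-value) together with the specific inequality $\sigma(v_n)\leq\max\{a,b\}$ --- which is where the hypothesis $f=\max$ is used --- is the missing idea.
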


\begin{proof}
In the case $n\leq r$, the graph $G$ has no $r$-paths, so the empty set describes the unique minimal 
weighted $r$-path vertex cover of $G_\omega$. This has cardinality $0<n$, as desired. 
Thus, for the remainder of the proof, we assume that $n>r$.

Let $(W,\sigma)$ be a weighted $r$-path vertex cover of $G_\omega$. 
We show that, if $|W|=n$, then $(W,\sigma)$ is not minimal.

Assume that $|W|=n$, and write $(W,\sigma)=\{v_1^{e_1},\dots ,v_n^{e_n}\}$.  Reorder the $v_i$ if necessary to assume that $e_1\leq e_2\leq\cdots\leq e_n$.  
We 
show that $v_n^{e_n}$ is superfluous in the vertex cover.  

Suppose by way of contradiction that $v_n^{e_n}$ cannot be removed from $(W,\sigma)$.
This implies that one of the $r$-paths $p$ in $G$ can only be covered by the weighted vertex $v_n^{e_n}$.  
In particular, $p$ must pass through $v_n$,
so assume that $p$ uses the vertices $v_{i_1},\ldots,v_{i_r},v_n$ with $i_1,\ldots,i_r< n$.

As a special case, assume that $p$ has the following form.
$$\xymatrix{
*+[F]{v_{i_1}^{e_{i_1}}}\ar@{-}[r]&\cdots\ar@{-}[r]&*+[F]{v_{i_{r-1}}^{e_{i_{r-1}}}}\ar@{-}[r]^a&*+[F]{v_n^{e_n}}\ar@{-}[r]^b&*+[F]{v_{i_r}^{e_{i_r}}}
}$$
By assumption, 
the weighted vertices $v_{i_{r-1}}^{e_{i_{r-1}}}$ and $v_{i_{r}}^{e_{{i_r}}}$ do not cover this path, so we have $e_{i_{r-1}}>a$ and $e_{i_r}>b$.
Also, 
the weighted vertex $v_n^{e_n}$ does cover this path, so we have $e_n\leq a<e_{i_{r-1}}\leq e_n$ or $e_n\leq b<e_{i_r}\leq e_n$, a contradiction.

The general case where $v_n$ is not an endpoint of $p$ is handled similarly. The remaining case where $v_n$
is an endpoint of $p$ is similar, but easier.
\end{proof}

\begin{defn}\label{defn130619a}
A weighted graph $G_\omega$ is \emph{$r$-path unmixed with respect to $f$} if all minimal $f$-weighted $r$-path vertex covers have the same cardinality;
$G_\omega$ is \emph{$r$-path mixed with respect to $f$} is if it is not $r$-path unmixed.
We write that the unweighted graph $G$ is ``$r$-path (un)mixed'' when the trivially weighted graph (with $\omega(e)=1$ for all $e\in E$) is so.
\end{defn}

\section{Weighted Path Ideals and their Decompositions}
\label{sec130827b}

In this section, we introduce and study weighted path ideals.
In particular, we prove Theorem~\ref{intprop130725} from the introduction here.

\begin{defn}\label{defn130619f}
The \emph{$f$-weighted $r$-path ideal} associated to $G_\omega$ is the ideal $I_{r,f}(G_\omega)$ of $S$ that is ``generated by 
the weighted $r$-paths  in $G_{\omega}$".
$$
I_{r,f}(G_\omega)=\left(X_{i_1}^{e_{i_1}}\cdots X_{i_{r+1}}^{e_{i_{r+1}}}\left|
\begin{tabular}{l} $v_{i_1}\dots v_{i_{r+1}}$ is a path  in $G$ with $e_{i_1}=\omega(v_{i_1}v_{i_2})$,\! \\
$e_{i_j}=f\{\omega(v_{i_{j-1}}v_{i_j}),\omega(v_{i_j}v_{i_{j+1}})\}$ for $1<j\leq r$,  \\
and $e_{i_{r+1}}=\omega(v_{i_r}v_{i_r+1})$ \\
\end{tabular}
\right.\right)S
$$
\end{defn}

See Remark~\ref{rmk130827c} for some justification for this definition.

\begin{ex}\label{ex130827f}
Consider the weighted tree $G_{\omega}$ from Example~\ref{ex130827c} with $r=3$
and with $f=\max$.
The 3-path $v_1v_2v_3v_6$ provides one generator  of $I_{3,\max}(G_\omega)$, namely 
$$X_1^{\omega(v_1v_2)}X_2^{\max\{\omega(v_1v_2),\omega(v_2v_3)\}}X_3^{\max\{\omega(v_2v_3),\omega(v_3v_6)\}}X_6^{\omega(v_3v_6)}
=X_1^2X_2^2X_3^3X_6^3.$$
From the remaining 3-paths, we find that 
$$I_{3,\max}(G_\omega)
=(X_1^2X_2^2X_3^3X_6^3, X_1^2X_2^2X_3^2X_4^2, X_2X_3^2X_4^2X_5^2,X_3^3X_4^2X_5^2X_6^3)S.$$
\end{ex}

\begin{disc}\label{rmk130827b}
In the case $r=1$, the ideal $I_{1,f}(G_\omega)$ is the ``weighted edge ideal'' of~\cite{paulsen:eiwg}.
(Note that this is independent of the choice of $f$.)
When $\omega(e)=1$ for all $e\in E$ and $f=\max$, we recover the ``path ideal'' $I_r(G)$ of~\cite{morey:dcmppi,conca:msgililt}.
Also, the special case $f=\max$ yields the ideal $I_r(G_\omega)$ from the introduction.
\end{disc}

\begin{disc}\label{rmk130827c}
Our definition of $I_{r,f}(G_{\omega})$ probably deserves some justification.
Our purpose is to have this definition satisfy the conclusions of Remark~\ref{rmk130827b}.
In order to recover the path ideal of~\cite{morey:dcmppi,conca:msgililt}, the generators should correspond to the
$r$-paths in $G$. To recover the weighted edge ideal of~\cite{paulsen:eiwg} in the case $r=1$,
the generator corresponding to a path $\zeta=v_{i_1}\ldots v_{i_{r+1}}$ should be of the form
$X_{i_1}^{e_{i_1}}\cdots X_{i_{r+1}}^{e_{i_{r+1}}}$ where the exponent $e_{i_j}$ depends on the weights of the 
edges in $\zeta$ that are adjacent to the vertex $v_{i_j}$. For the endpoints $v_{i_1}$ and $v_{i_{r+1}}$,
it seems reasonable to simply use the weight of the only relevant edges, namely, $\omega(v_{i_1}v_{i_2})$ and $\omega(v_{i_{r}}v_{i_{r+1}})$.
However, when $1<j\leq r$, the value of $e_{i_j}$ should depend on both weights $\omega(v_{i_{j-1}}v_{i_{j}})$ and $\omega(v_{i_{j}}v_{i_{j+1}})$.
We entertained several ideas about the ``best'' way to combine these two weights to define $e_{i_j}$, including
max, min, gcd, and lcm. 

Theorem~\ref{prop130725} shows that, from the point of view of decomposing $I_{r,f}(G_{\omega})$
(e.g., computing a primary decomposition of $I_{r,f}(G_{\omega})$, determining unmixedness, etc. when $A$ is a field) 
there is no ``best'' choice for $f$. In other words, every choice for $f$ yields an ideal that we can explicitly decompose. 
(In principle, this explains our choice of condition (3) in Definition~\ref{defn130619b}. While this condition may seem a little strange,
it is the exact condition that  works for our decomposition result.)
On the other hand, our results on Cohen-Macaulayness in Sections~\ref{sec130827d} and~\ref{sec130827c}
indicate that the choice $f=\max$ is somewhat nicer than others we considered,
in that it seems more difficult to characterize Cohen-Macaulayness of $I_{r,f}(G_{\omega})$ when $f\neq\max$.
\end{disc}

In the next definition, recall the notation $\Lambda$ from~\ref{defn130619b}.

\begin{defn}\label{defn130725}
For all $(W,\sigma)\in\Lambda$ we write $P_{(W,\sigma)}=(X_i^{\sigma(v_i)}|v_i\in W)S$.
\end{defn}

One advantage for the algebraic notation from Example~\ref{ex130827d} for elements of $\Lambda$,
is that it explicitly provides generators for the ideal $P_{(W,\sigma)}$.
For instance, with
$(W,\sigma)=\{v_1^{2},v_5^{2}\}$, we have
$$P_{(W,\sigma)}=P_{\{v_1^{2},v_5^{2}\}}=(X_1^{2},X_5^{2})S.$$

\begin{disc}\label{rmk130827d}
It is straightforward to show that the ideals in $S$ of the form $P_{(W,\sigma)}$ are precisely the indecomposable elements of the set of
monomial ideals of $S$. In other words, a monomial ideal $I$ of $S$ is of the form $P_{(W,\sigma)}$ if and only if it satisfies the following:
for all monomial ideals $J_1,J_2$ such that $I=J_1\cap J_2$, one has $I=J_i$ for some $j\in\{1,2\}$.
(In the language of~\cite{rogers:mid}, these are the ``m-irreducible'' monomial ideals of $S$.)
When the coefficient ring $A$ is a field, the ideal $P_{(W,\sigma)}$ is primary with
$\rad{P_{(W,\sigma)}}=(X_i\mid v_i\in W)S$. Hence, when we are working over a field, Theorem~\ref{prop130725}\eqref{prop130725ii}
below gives an irredundant primary decomposition of $I_{r,f}(G_\omega)$.
In general, this is the ``m-irreducible decomposition'' of~\cite{rogers:mid}. 

It is straightforward to show
that every monomial ideal $I$ of $S$ admits a unique irredundant m-irreducible decomposition $I=P_{(W_1,\sigma_1)}\bigcap\cdots\bigcap P_{(W_t,\sigma_t)}$;
uniqueness here is up to reordering of the ideals in the decomposition, and ``irredundant'' means that no ideal in this decomposition is contained in
any other ideal in the decomposition.
We write that $I$ is \emph{m-unmixed} provided that all the $W_i$ in this decomposition have the same cardinality.
We write that $I$ is \emph{m-mixed} provided that it is not m-unmixed.
When we are working over a field, these are equivalent to $I$ being unmixed or mixed, respectively.
\end{disc}

The next result contains  Theorem~\ref{intprop130725} from the introduction.

\begin{thm}\label{prop130725}
\begin{enumerate}[\rm (a)]
\item\label{prop130725i} Given $(W,\sigma)\in\Lambda$, one has $I_{r,f}(G_\omega)\subseteq P_{(W,\sigma)}$ if and only if $(W,\sigma)$
is an $f$-weighted $r$-path  vertex cover of $G_\omega$.
\item\label{prop130725ii} One has decompositions
$$I_{r,f}(G_\omega)=\bigcap_{(W,\sigma)}P_{(W,\sigma)}=\bigcap_{\text{$(W,\sigma)$ min}}\!\!\!\! P_{(W,\sigma)}$$
where the first intersection is taken over all $f$-weighted $r$-path  vertex covers of $G_\omega$, and 
the second intersection is taken over all minimal $f$-weighted $r$-path  vertex covers of $G_\omega$.
Moreover, the second intersection is irredundant.
\end{enumerate}
\end{thm}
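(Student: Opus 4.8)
The plan is to prove part \eqref{prop130725i} first, and then deduce part \eqref{prop130725ii} from it together with the lemmas already established. For \eqref{prop130725i}, recall that a monomial ideal is contained in another monomial ideal if and only if every monomial generator of the first lies in the second, and that a monomial $X_{i_1}^{a_1}\cdots X_{i_s}^{a_s}$ lies in $P_{(W,\sigma)}=(X_i^{\sigma(v_i)}\mid v_i\in W)S$ if and only if some $X_{i_k}$ with $v_{i_k}\in W$ has exponent $a_k\geq\sigma(v_{i_k})$. So I would fix $(W,\sigma)\in\Lambda$ and unwind what it means for the generator of $I_{r,f}(G_\omega)$ attached to a path $v_{i_1}\dots v_{i_{r+1}}$ — namely $X_{i_1}^{e_{i_1}}\cdots X_{i_{r+1}}^{e_{i_{r+1}}}$ with the $e_{i_j}$ as in Definition~\ref{defn130619f} — to lie in $P_{(W,\sigma)}$: this says exactly that there is an index $j$ with $v_{i_j}\in W$ and $\sigma(v_{i_j})\leq e_{i_j}$. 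Matching the three cases $j=1$, $j=r+1$, $1<j\leq r$ of the exponent formula against the three cases in Definition~\ref{defn130619b} shows that this condition, quantified over all $r$-paths, is word-for-word the statement that $(W,\sigma)$ is an $f$-weighted $r$-path vertex cover. This gives \eqref{prop130725i}.

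For \eqref{prop130725ii}, the first equality is formal from \eqref{prop130725i}: the intersection $\bigcap_{(W,\sigma)}P_{(W,\sigma)}$ over all $f$-weighted $r$-path vertex covers is the intersection of all monomial ideals of the form $P_{(W,\sigma)}$ that contain $I_{r,f}(G_\omega)$, so it contains $I_{r,f}(G_\omega)$; for the reverse containment, observe that each generating monomial $m=X_{i_1}^{e_{i_1}}\cdots X_{i_{r+1}}^{e_{i_{r+1}}}$ of $I_{r,f}(G_\omega)$ itself lies in every such $P_{(W,\sigma)}$ by \eqref{prop130725i}, while a general element of $I_{r,f}(G_\omega)$ is an $S$-combination of such $m$, hence also lies in each $P_{(W,\sigma)}$ (each $P_{(W,\sigma)}$ is an ideal) — actually the cleanest argument is: $I_{r,f}(G_\omega)\subseteq P_{(W,\sigma)}$ for every $f$-weighted $r$-path vertex cover by \eqref{prop130725i}, so $I_{r,f}(G_\omega)\subseteq\bigcap_{(W,\sigma)}P_{(W,\sigma)}$, and conversely any monomial in $\bigcap_{(W,\sigma)}P_{(W,\sigma)}$ — it suffices to check monomials since all ideals in sight are monomial — that is not in $I_{r,f}(G_\omega)$ can be divided by no generating monomial, and then I exhibit an $f$-weighted $r$-path vertex cover $(W,\sigma)$ whose $P_{(W,\sigma)}$ misses it, contradiction. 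Concretely, given a monomial $u=\prod X_i^{c_i}\notin I_{r,f}(G_\omega)$, let $W$ be the set of $v_i$ with $c_i\geq 1$ and put $\sigma(v_i)=c_i+1$; because $u$ is divisible by no path generator, for every $r$-path $v_{i_1}\dots v_{i_{r+1}}$ some $e_{i_j}>c_{i_j}$, i.e.\ $c_{i_j}<e_{i_j}$, which forces $v_{i_j}\in W$ and $\sigma(v_{i_j})=c_{i_j}+1\le e_{i_j}$, so $(W,\sigma)$ is an $f$-weighted $r$-path vertex cover and $u\notin P_{(W,\sigma)}$, done.

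For the second equality, Lemma~\ref{prop130721} gives, for each $f$-weighted $r$-path vertex cover $(W,\sigma)$, a minimal one $(W'',\sigma'')$ with $(W'',\sigma'')\leq(W,\sigma)$; by Definition~\ref{defn130619c} this ordering means $W''\subseteq W$ and $\sigma(v_i)\le\sigma''(v_i)$ for $v_i\in W''$, which translates into $P_{(W'',\sigma'')}\subseteq P_{(W,\sigma)}$. Hence intersecting over just the minimal covers already captures the full intersection, giving the middle equality. Finally, for irredundancy I must show that for two distinct minimal $f$-weighted $r$-path vertex covers $(W,\sigma)\neq(W',\sigma')$ one never has $P_{(W,\sigma)}\supseteq P_{(W',\sigma')}$. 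The containment $P_{(W',\sigma')}\subseteq P_{(W,\sigma)}$ would force $W\subseteq W'$ and $\sigma'(v_i)\geq\sigma(v_i)$ for all $v_i\in W$ — this is the slightly technical point, that a containment of m-irreducible monomial ideals $P_{(W',\sigma')}\subseteq P_{(W,\sigma)}$ is equivalent to $W\subseteq W'$ together with $\sigma(v_i)\le\sigma'(v_i)$ on $W$, which one checks by a generator/variable comparison — i.e.\ $(W,\sigma)\leq(W',\sigma')$, contradicting minimality of $(W',\sigma')$ unless the two are equal. The main obstacle is this last equivalence between ideal containment and the combinatorial order, and the reduction to monomials in the first equality; both are standard facts about monomial ideals (cf.\ \cite{rogers:mid}) but should be stated carefully, since everything rests on them.
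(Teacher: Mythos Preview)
Your proof of part~\eqref{prop130725i} matches the paper's exactly: translate ``generator of $I_{r,f}(G_\omega)$ lies in $P_{(W,\sigma)}$'' into the existence of an index $j$ with $v_{i_j}\in W$ and $\sigma(v_{i_j})\leq e_{i_j}$, then match the three exponent cases to Definition~\ref{defn130619b}. For part~\eqref{prop130725ii} the paper gives essentially no argument (it cites Lemma~\ref{prop130721} and~\cite[Theorem~3.5]{paulsen:eiwg}), so your explicit write-up is more than what is there; the structure---first equality from \eqref{prop130725i}, second equality from Lemma~\ref{prop130721}, irredundancy from the order-reversing bijection between $\leq$ and ideal containment---is the intended one.

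There is, however, a genuine error in your construction for the reverse containment in the first equality. Given a monomial $u=\prod X_i^{c_i}\notin I_{r,f}(G_\omega)$ you set $W=\{v_i:c_i\geq 1\}$ and $\sigma(v_i)=c_i+1$, then claim that for each $r$-path some $c_{i_j}<e_{i_j}$ ``forces $v_{i_j}\in W$''. This fails whenever $c_{i_j}=0$: take a single edge $v_1v_2$ of weight $1$ and $u=X_1^5$; your recipe gives $W=\{v_1\}$, $\sigma(v_1)=6$, which does not cover the $1$-path $v_1v_2$. The fix is to take $W=V$ (all vertices) with $\sigma(v_i)=c_i+1$ for every $i$; then $c_{i_j}<e_{i_j}$ gives $\sigma(v_{i_j})=c_{i_j}+1\leq e_{i_j}$ with $v_{i_j}\in W$ automatically, and $u\notin P_{(W,\sigma)}$ since no $X_i^{c_i+1}$ divides $u$. (Alternatively, as the paper implicitly does, one can bypass this construction entirely by invoking the existence of an m-irreducible decomposition of any monomial ideal and then applying \eqref{prop130725i} to each factor.)

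A smaller slip: in the irredundancy step you write that $P_{(W',\sigma')}\subseteq P_{(W,\sigma)}$ forces $W\subseteq W'$ and hence $(W,\sigma)\leq(W',\sigma')$. The direction is reversed---checking generators gives $W'\subseteq W$ and $\sigma(v_i)\leq\sigma'(v_i)$ on $W'$, i.e.\ $(W',\sigma')\leq(W,\sigma)$---but since both covers are assumed minimal the contradiction survives with the labels swapped.
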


\begin{proof}
\eqref{prop130725i} First assume that $(W,\sigma)$ is an
$f$-weighted $r$-path  vertex cover of $G_\omega$, and let $v_{i_1}\cdots v_{i_{r+1}}$ be an $r$-path in $G$.  By definition, there exists a $j\in\{1,\dots,r+1\}$ such that $v_{i_j}\in W$ and one of the following holds:
\begin{description}
\item[$j=1$] we have $\sigma(v_{i_1})\leq\omega(v_{i_1}v_{i_2})=e_{i_1}$;
\item[$j=r+1$] we have $\sigma(v_{i_{r+1}})\leq\omega(v_{i_r}v_{i_{r+1}})=e_{i_{r+1}}$; or
\item[$1<j\leq r$] we have $\sigma(v_{i_j})\leq f\{\omega(v_{i_{j-1}}v_{i_j}),\omega(v_{i_j}v_{i_{j+1}})\}=e_{i_j}.$
\end{description}
In each case we have  $v_{i_j}\in W$ and $\sigma(v_{i_j})\leq e_{i_j}$.  Thus,  $X_{i_j}^{\sigma(v_{i_j})}$ divides $X_{i_j}^{e_{i_j}}$, 
and hence the generator $X_{i_1}^{e_{i_1}}\cdots X_{i_{r+1}}^{e_{i_{r+1}}}$ of $I_{r,f}(G_\omega)$ is in $P_{(W,\sigma)}$.
Since this is true for each $r$-path in $G$, we conclude that $I_{r,f}(G_\omega)\subseteq P_{(W,\sigma)}$.

Conversely, assume that $I_{r,f}(G_\omega)\subseteq P_{(W,\sigma)}$ and let $v_{i_1}\cdots v_{i_{r+1}}$ be an $r$-path in $G$.  By assumption we have $X_{i_1}^{e_{i_1}}\cdots X_{i_{r+1}}^{e_{i_{r+1}}}\in I_{r,f}(G_\omega)\subseteq P_{(W,\sigma)}=(X_i^{\sigma(v_i)}|v_i\in W)$.  Hence there exists an $i$ such that $v_i$ is in $W$ 
and the associated generator $X_i^{\sigma(v_i)}$ divides $X_{i_1}^{e_{i_1}}\cdots X_{i_{r+1}}^{e_{i_{r+1}}}$.  Since $\sigma(v_i)\geq 1$, there exists a $j$ such that $i_j=i$ and $\sigma(v_i)\leq e_{i_j}$.    That is, there exists a $j$ such that $v_{i_j}=v_i\in W$ and $\sigma(v_{i_j})\leq e_{i_j}$.  Since this is true for each $r$-path in $G$,  we conclude that $(W,\sigma)$ is an $f$-weighted $r$-path  vertex cover of $G_\omega$.

\eqref{prop130725ii} 
This follows from Lemma~\ref{prop130721} and part~\eqref{prop130725i},
as in~\cite[Theorem 3.5]{paulsen:eiwg}.
\end{proof}

\begin{cor}\label{prop130728bz}
We have
$\depth(S/I_{r,f}(G_\omega))\geq 1$.
\end{cor}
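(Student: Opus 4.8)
The plan is to exhibit a single prime (or at least a nonzerodivisor modulo $I_{r,f}(G_\omega)$) in order to bound the depth below by $1$; equivalently, to show that the maximal ideal $\fm=(X_1,\dots,X_n)S$ (or rather its image) is not an associated prime of $S/I_{r,f}(G_\omega)$, so that depth is at least $1$. The cleanest route uses Theorem \ref{prop130725}\eqref{prop130725ii}: we have the decomposition $I_{r,f}(G_\omega)=\bigcap_{(W,\sigma)\text{ min}}P_{(W,\sigma)}$, and by Lemma \ref{lem130902az} every minimal weighted $r$-path vertex cover $(W,\sigma)$ has $|W|\leq n-1$, so each $P_{(W,\sigma)}=(X_i^{\sigma(v_i)}\mid v_i\in W)S$ omits at least one variable.

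First I would pick any variable $X_k$ that does not divide (any generator of) at least one $P_{(W,\sigma)}$ in the irredundant decomposition; more robustly, I would argue directly on associated primes. Over a general commutative ring $A$ this requires a little care since "depth" and "associated prime" need a base-change-stable formulation, so I would instead prove the statement by producing a nonzerodivisor explicitly, namely I claim $X_1\cdots X_n + I_{r,f}(G_\omega)$, or more simply a suitable single variable, is a nonzerodivisor on $S/I_{r,f}(G_\omega)$. Concretely: since $I_{r,f}(G_\omega)$ is a monomial ideal, $S/I_{r,f}(G_\omega)$ is $\bbz^n$-graded, and its associated primes (when $A$ is Noetherian; in general one argues with zerodivisors directly on monomials) are monomial primes contained in the $P_{(W,\sigma)}$'s. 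If $z\in S$ is a nonzero homogeneous element with $z\cdot X_k\in I_{r,f}(G_\omega)$ for every $k$, then (choosing a monomial appearing in $z$ and working modulo the ideal) the monomial $z$ times each $X_k$ lies in some $P_{(W_k,\sigma_k)}$; using $|W_k|\leq n-1$ for each $k$ and a counting/pigeonhole argument across the finitely many minimal covers, one derives that $z$ itself already lies in $I_{r,f}(G_\omega)$, a contradiction.

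Alternatively — and this is probably the shortest write-up — I would reduce to the field case and a known fact: when $A$ is a field, $\rad{P_{(W,\sigma)}}=(X_i\mid v_i\in W)S$ by Remark \ref{rmk130827d}, so $\rad{I_{r,f}(G_\omega)}=\bigcap_{(W,\sigma)\text{ min}}(X_i\mid v_i\in W)S$, and each such ideal has height $|W|\leq n-1<n$ by Lemma \ref{lem130902az}; hence $\fm$ is not a minimal prime, and since the $P_{(W,\sigma)}$ are primary, $\fm$ is not an associated prime either, giving $\depth(S/I_{r,f}(G_\omega))\geq 1$. For general $A$, I would invoke that depth of a monomial quotient over $A[X_1,\dots,X_n]$ can be computed after passing to a residue field of $A$, or simply localize: the element $X_k$ for any $k$ not in the union... no, that union can be everything, so the pigeonhole/counting step is genuinely needed.

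The main obstacle is handling the arbitrary (possibly non-Noetherian, non-field) coefficient ring $A$: "depth $\geq 1$" must be interpreted correctly, and the cleanest fix is to show that some $A$-regular, $S/I_{r,f}(G_\omega)$-regular element exists. I expect the actual argument in the paper is just: the decomposition of Theorem \ref{prop130725}\eqref{prop130725ii} together with Lemma \ref{lem130902az} shows $I_{r,f}(G_\omega)$ has no embedded component equal to $\fm$, and in particular $\fm\notin\ass(S/I_{r,f}(G_\omega))$, so $\depth\geq 1$; the only real content is Lemma \ref{lem130902az}, which is already proved.
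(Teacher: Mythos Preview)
Your final paragraph is exactly the paper's proof: combine Theorem~\ref{prop130725}\eqref{prop130725ii} with Lemma~\ref{lem130902az} to see that no minimal $f$-weighted $r$-path vertex cover has cardinality $n$, so $\fm=(X_1,\dots,X_n)S$ is not associated to $I_{r,f}(G_\omega)$, giving $\depth\geq 1$. The exploratory detours (explicit nonzerodivisors, pigeonhole arguments, base change to a field) are unnecessary, and the paper does not address the general-$A$ subtlety you raise either.
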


\begin{proof}
Lemma~\ref{lem130902az} and Theorem~\ref{prop130725}
imply that the maximal ideal $(X_1,\ldots,X_n)S$ is not associated to $I_{2,\max}(K^n_\omega)$, 
hence the desired conclusion.
\end{proof}

\begin{disc}\label{rmk130806}
Remark~\ref{rmk130827d} and Theorem~\ref{prop130725}
imply that $I_{r,f}(G_\omega)$ is m-unmixed if and only if  $G_\omega$ is $r$-path unmixed.
In particular, the $r$-path ideal $I_r(G)$ of~\cite{morey:dcmppi,conca:msgililt} is m-unmixed
if and only if the unweighted graph $G$ is $r$-path unmixed.
\end{disc}

\begin{ex}\label{ex130831a}
Consider the weighted tree $G_{\omega}$ from Example~\ref{ex130827c} with $r=3$
and with $f=\max$.
The ideal $I_{3,\max}(G_\omega)$, computed in Example~\ref{ex130827f}, decomposes irredundantly as follows:
\begin{align*}
I_{3,\max}(G_\omega)=
&(X_3^2)S\bigcap
(X_1^2,X_4^2)S\bigcap
(X_1^2,X_5^2)S\bigcap
(X_2^2,X_4^2)S\bigcap
(X_2^2,X_5^2)S\\
&\bigcap
(X_3^3,X_4^2)S\bigcap
(X_4^2,X_6^3)S\bigcap
(X_2,X_3^3)S\bigcap
(X_2,X_6^3)S.
\end{align*}
If one computes this algebraically (as we did), one can identify all of the minimal weighted $r$-path vertex covers of $G_{\omega}$.
(For instance,  the minimal weighted $r$-path vertex covers
$\{v_3^2\}$ and $\{v_1^{2},v_5^{2}\}$ from Example~\ref{ex130827e}
are visible via the ideals $(X_3^2)S$ and $(X_1^2,X_5^2)S$ in the  decomposition.)
On the other hand, if one is combinatorially inclined, one can first identify all of the minimal $f$-weighted $r$-path vertex covers of $G_{\omega}$,
and then obtain the decomposition from Theorem~\ref{prop130725}. 
\end{ex}

The next lemma is for use in the proof of Theorem~\ref{intthm130831a}.

\begin{lem}\label{lem130725}
If $I_{r,f}(G_\omega)$ is m-unmixed, then $I_r(G)$ is also m-unmixed.
\end{lem}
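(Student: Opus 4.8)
The plan is to show the contrapositive fails in the wrong direction, or rather, to transfer m-unmixedness from the weighted setting down to the unweighted one by relating the minimal $r$-path vertex covers of $G$ to minimal $f$-weighted $r$-path vertex covers of $G_\omega$. The key observation is the one recorded in Example~\ref{rmk130827a}: if $W$ is any $r$-path vertex cover of $G$, then the constant function $\sigma\equiv 1$ on $W$ gives an $f$-weighted $r$-path vertex cover $(W,\sigma)$ of $G_\omega$, and its cardinality as an element of $\Lambda$ is exactly $|W|$.

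First I would take a minimal $r$-path vertex cover $W$ of the unweighted graph $G$ and form $(W,\mathbf{1})$ as above. By Lemma~\ref{prop130721}, there is a minimal $f$-weighted $r$-path vertex cover $(W'',\sigma'')\leq(W,\mathbf{1})$; in particular $W''\subseteq W$. Since $(W'',\sigma'')$ is an $f$-weighted $r$-path vertex cover, the parenthetical remark in Definition~\ref{defn130619b} says $W''$ is an $r$-path vertex cover of the unweighted graph $G$. But $W$ was a \emph{minimal} such cover and $W''\subseteq W$, so $W''=W$. Thus for every minimal $r$-path vertex cover $W$ of $G$, the pair $(W,\sigma'')$ is a minimal $f$-weighted $r$-path vertex cover of $G_\omega$ of the same cardinality $|W|$.

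Now suppose $I_{r,f}(G_\omega)$ is m-unmixed. By Remark~\ref{rmk130806}, this means $G_\omega$ is $r$-path unmixed with respect to $f$, i.e.\ all minimal $f$-weighted $r$-path vertex covers of $G_\omega$ have the same cardinality, say $c$. By the previous paragraph, every minimal $r$-path vertex cover $W$ of $G$ arises as the underlying set of a minimal $f$-weighted $r$-path vertex cover, so $|W|=c$ for every minimal $r$-path vertex cover $W$ of $G$. Hence $G$ is $r$-path unmixed, and by Remark~\ref{rmk130806} again (applied to the trivially weighted graph $G=G_1$, whose $r$-path ideal is $I_r(G)$ by Remark~\ref{rmk130827b}), the ideal $I_r(G)$ is m-unmixed.

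The main obstacle — really the only subtlety — is making sure that the minimal $f$-weighted $r$-path vertex cover $(W'',\sigma'')$ produced by Lemma~\ref{prop130721} does not shrink the underlying set strictly below $W$; this is where the minimality of $W$ as an unweighted $r$-path vertex cover, combined with the parenthetical in Definition~\ref{defn130619b} that forces $W''$ to still be an unweighted $r$-path vertex cover, does exactly the required work. Everything else is bookkeeping with the cardinality function $|(W,\sigma)|=|W|$ and the translation in Remark~\ref{rmk130806} between m-unmixedness of the ideal and $r$-path unmixedness of the graph.
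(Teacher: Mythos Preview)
Your proof is correct and follows essentially the same approach as the paper: start with a minimal unweighted $r$-path vertex cover $W$, upgrade it to $(W,\mathbf 1)$, push down to a minimal $f$-weighted cover via Lemma~\ref{prop130721}, and use minimality of $W$ together with the parenthetical in Definition~\ref{defn130619b} to force $W''=W$. The only differences are cosmetic---you route through Remark~\ref{rmk130806} to translate between m-unmixedness and graph-theoretic unmixedness, while the paper works directly with the common cardinality $k$.
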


\begin{proof}
Assume that $I_{r,f}(G_\omega)$ is m-unmixed.  Then there exists an integer $k$ such that  every minimal $f$-weighted $r$-path  
vertex cover $(W,\sigma)$ of $G_\omega$ has $|W|=k$.  Let $W'$ be a minimal $r$-path vertex cover of $G$.  We show that $|W'|=k$.

As we observed in Example~\ref{rmk130827a}, the constant function $\sigma':W'\rightarrow\bbn$ given by $\sigma'(v_i)=1$
yields an $f$-weighted $r$-path  vertex cover $(W',\sigma')$ of $G_\omega$.  Lemma~\ref{prop130721} implies that there exists a minimal $f$-weighted $r$-path vertex cover $(W'',\sigma'')$ of $G_\omega$ such that $(W'',\sigma'')\leq(W',\sigma')$.  By assumption, we have $|W''|=k$.  
By the minimality of $W'$, we have $W''= W'$, hence $|W'|=|W''|=k$.  
\end{proof}

We conclude this section with two lemmas used in the proof of Theorem~\ref{inthm130728}.

\begin{lem}\label{lem130728}
Let $G'_{\omega'}$ denote the weighted subgraph of $G$  induced by $V\ssm\{v_n\}$.  
Set $S'=A[X_1,\ldots,X_{n-1}]$.
Then the natural isomorphism $S/(X_n)S\cong S'$ induces an isomorphism
$$S/(I_{r,\max}(G_\omega)+(X_n)S)\cong S'/I_{r,\max}(G'_{\omega'}). $$
\end{lem}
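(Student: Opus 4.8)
The plan is to identify both sides as quotients of $S'$ by the same monomial ideal, using the standard fact that for a monomial ideal $J$ of $S$ with $X_n \notin J$ the image of $J$ under the natural map $S \onto S/(X_n)S \cong S'$ is the monomial ideal generated by the images of a monomial generating set of $J$, where each generator involving $X_n$ is sent to $0$ and each generator not involving $X_n$ is sent to itself. Applying the third isomorphism theorem, one has $S/(I_{r,\max}(G_\omega)+(X_n)S) \cong S'/\overline{I_{r,\max}(G_\omega)}$, where $\overline{(-)}$ denotes the image in $S'$. So the crux is to show $\overline{I_{r,\max}(G_\omega)} = I_{r,\max}(G'_{\omega'})$ as ideals of $S'$.

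First I would describe the monomial generators of $I_{r,\max}(G_\omega)$ via Definition~\ref{defn130619f}: they are indexed by the $r$-paths $v_{i_1}\cdots v_{i_{r+1}}$ of $G$, with the generator being $X_{i_1}^{e_{i_1}}\cdots X_{i_{r+1}}^{e_{i_{r+1}}}$. Under the map to $S'$, such a generator survives (maps to a nonzero monomial) if and only if none of the $i_j$ equals $n$, i.e., if and only if $v_n$ is not a vertex of the path; otherwise it maps to $0$. Now $r$-paths of $G$ not passing through $v_n$ are exactly the $r$-paths of the induced subgraph $G' = G[V \ssm \{v_n\}]$, since a sequence of distinct vertices in $V \ssm \{v_n\}$ is a path in $G$ if and only if it is a path in $G'$ (induced subgraph). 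Moreover, for such a path all the relevant edges $v_{i_{j-1}}v_{i_j}$ lie in $G'$, so $\omega'$ agrees with $\omega$ on them and the exponents $e_{i_j}$ computed for $G_\omega$ coincide with those computed for $G'_{\omega'}$. Hence the nonzero images of the generators of $I_{r,\max}(G_\omega)$ are precisely the generators of $I_{r,\max}(G'_{\omega'})$, giving the claimed equality of ideals in $S'$.

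Finally I would assemble the chain of isomorphisms: $S/(I_{r,\max}(G_\omega)+(X_n)S) \cong (S/(X_n)S)/(I_{r,\max}(G_\omega)\cdot(S/(X_n)S)) \cong S'/\overline{I_{r,\max}(G_\omega)} = S'/I_{r,\max}(G'_{\omega'})$, and check that this composite is induced by the natural isomorphism $S/(X_n)S \cong S'$, which is immediate by construction. I do not expect any serious obstacle here; the only point requiring a little care is the bookkeeping that the exponents and the notion of $r$-path are genuinely unchanged upon passing to the induced subgraph $G'$ (so that one must invoke that $G'$ is \emph{induced}, not merely a subgraph), and that the image of a monomial ideal under a coordinate projection killing one variable is again monomial with the expected generators.
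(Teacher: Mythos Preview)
Your proposal is correct and follows essentially the same approach as the paper: both arguments reduce to the observation that the generators of $I_{r,\max}(G_\omega)$ not involving $X_n$ correspond exactly to the $r$-paths of $G$ avoiding $v_n$, which are precisely the $r$-paths of the induced subgraph $G'$, with the same exponents since $\omega'=\omega|_{E(G')}$. The only cosmetic difference is that the paper phrases this via the kernel of the composite $S/(X_n)S\cong S'\twoheadrightarrow S'/I_{r,\max}(G'_{\omega'})$, while you phrase it via the image ideal $\overline{I_{r,\max}(G_\omega)}\subseteq S'$; these are equivalent formulations of the same computation.
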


\begin{proof}
Let $\tau\colon S/(X_n)S\to S'/I_{r,\max}(G'_{\omega'})$ denote the composition of the natural maps $S/(X_n)S\xra\cong S'\onto S'/I_{r,\max}(G'_{\omega'})$.
To show that $\tau$ induces a well-defined epimorphism $\pi\colon S/(I_{r,\max}(G_\omega)+(X_n)S)\onto S'/I_{r,\max}(G'_{\omega'})$,
it suffices to show that each generator of $I_{r,\max}(G_\omega)(S/(X_n)S)$ is in  $\Ker(\tau)$.
Note that the generators of $I_{r,\max}(G_\omega)(S/(X_n)S)$ correspond to the $r$-paths  in $G$ that do not pass through $v_n$.
That is, they correspond to the $r$-paths in $G'$. Since $\omega'(e)=\omega(e)$ for each edge in $G'$, it follows that the
generators of $I_{r,\max}(G_\omega)(S/(X_n)S)$ and $I_{r,\max}(G'_{\omega'})$ corresponding to such a path are equal.
This gives the desired result about $\Ker(\tau)$. A similar argument shows that $\Ker(\tau)=I_{r,\max}(G_\omega)(S/(X_n)S)$, so the induced map $\pi$ is an isomorphism.
\end{proof}

\begin{lem}\label{lem140801a}
The ideal $I_{r,f}(G_\omega)$ can be written as 
$$I_{r,f}(G_\omega)=\sum I_{r,f}(G'_{\omega'})S$$
where the sum is taken over all weighted subgraphs $G'_{\omega'}$ of $G_\omega$ induced by $r+1$ vertices.
(If $G'_{\omega'}$ is induced by $v_{i_1},\ldots,v_{i_{r+1}}$ with $i_1<\cdots<i_{r+1}$, then we consider
$I_{r,f}(G'_{\omega'})$ in the polynomial subring $A[X_{i_1},\ldots,X_{i_{r+1}}]\subseteq S$.)
\end{lem}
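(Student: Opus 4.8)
The plan is to prove equality of the two monomial ideals by showing that they have the same monomial generators, and in fact it is cleanest to show the stronger statement that the minimal monomial generating set of $I_{r,f}(G_\omega)$ coincides exactly with the union, over all $(r+1)$-subsets, of the minimal generators of the ideals $I_{r,f}(G'_{\omega'})S$. First I would unwind the definition of $I_{r,f}(G_\omega)$ from Definition~\ref{defn130619f}: its generators are indexed by the $r$-paths $\zeta = v_{i_1}\cdots v_{i_{r+1}}$ in $G$, with the generator $m_\zeta = X_{i_1}^{e_{i_1}}\cdots X_{i_{r+1}}^{e_{i_{r+1}}}$ determined by the weights $\omega(v_{i_j}v_{i_{j+1}})$ of the edges of $\zeta$ via the endpoint rule and the $f$-rule for interior vertices.

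The key observation is that an $r$-path $\zeta$ in $G$ uses exactly $r+1$ distinct vertices, say $v_{i_1},\ldots,v_{i_{r+1}}$, and $\zeta$ is also an $r$-path in the weighted subgraph $G'_{\omega'}$ of $G_\omega$ induced by precisely those $r+1$ vertices. Conversely, any $r$-path in an induced subgraph on $r+1$ vertices is an $r$-path in $G$. Since $G'_{\omega'}$ is an \emph{induced} subgraph, $\omega'$ agrees with $\omega$ on every edge of $G'$, and in particular on every edge of $\zeta$; hence the exponents $e_{i_1},\ldots,e_{i_{r+1}}$ computed for $\zeta$ as an $r$-path of $G_\omega$ are literally the same as those computed for $\zeta$ as an $r$-path of $G'_{\omega'}$. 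Therefore $m_\zeta$ is a generator of $I_{r,f}(G_\omega)$ if and only if it is a generator of $I_{r,f}(G'_{\omega'})$ for the $G'_{\omega'}$ induced by the vertex set of $\zeta$. This gives the containment $I_{r,f}(G_\omega) \subseteq \sum I_{r,f}(G'_{\omega'})S$ (every generator of the left side appears on the right) and, for the reverse containment, one checks that every generator of each $I_{r,f}(G'_{\omega'})$ is of the form $m_\zeta$ for some $r$-path $\zeta$ of $G'$, which is then an $r$-path of $G$ with the same exponents, so it lies in $I_{r,f}(G_\omega)$.

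There is really only one point requiring care, which I regard as the main (minor) obstacle: one must check the edge cases $n \leq r$ and $n = r+1$. If $n < r+1$ there are no $r$-paths in $G$ and no induced subgraphs on $r+1$ vertices, so both sides are the zero ideal and the statement holds vacuously; if $n = r+1$ the only induced subgraph on $r+1$ vertices is $G_\omega$ itself, and the identity is trivial. For $n > r+1$ the argument above applies verbatim. I would also remark parenthetically that the matching of polynomial rings is harmless: $I_{r,f}(G'_{\omega'})$ computed inside $A[X_{i_1},\ldots,X_{i_{r+1}}]$ has the same monomial generators as its extension $I_{r,f}(G'_{\omega'})S$ to $S$, since extension of a monomial ideal along a polynomial-ring inclusion just adjoins the remaining variables without collapsing any generators. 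Assembling these observations yields the claimed equality.
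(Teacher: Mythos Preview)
Your proof is correct and follows essentially the same approach as the paper's: both arguments match up the defining generators of the two sides by observing that an $r$-path in $G$ lies in the induced weighted subgraph on its $r+1$ vertices (and conversely), with identical edge weights and hence identical exponents. Your treatment is in fact more thorough than the paper's, which omits the edge cases $n\leq r+1$ and the remark about extension along the polynomial-ring inclusion.
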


\begin{proof}
For the containment $\supseteq$, note that each generator $g$ of $I_{r,f}(G'_{\omega'})S$ is determined by an $r$-path in
$G'$, which is an $r$-path in $G$ with the same weights; hence $g$ is also a generator of $I_{r,f}(G_\omega)$.
For the reverse containment, note that each generator $h$ of $I_{r,f}(G_\omega)$ comes from an $r$-path in $G_\omega$,
and this $r$-path lives in a (unique) induced weighted subgraph $G'_{\omega'}$ of $G_\omega$ on $r+1$ vertices;
thus, $h$ is also a generator of $I_{r,f}(G'_{\omega'})S$.
\end{proof}

\section{Cohen-Macaulay Weighted Trees}
\label{sec130827d}

\begin{assumption*}
Throughout this section, $A$ is a field.
\end{assumption*}

The point of this section is to prove Theorem~\ref{intthm130831a} from the introduction characterizing Cohen-Macaulayness of trees in the
context of weighted path ideals for the function $f=\max$.

\begin{defn}\label{defn130726}
Assume that $v_i$ is a vertex of degree 1 in $G$ that is not a part of any $r$-path in $G$.
We write that $v_i$ is an \emph{$r$-pathless leaf} of $G_{\omega}$.
Let $H_{\lambda}$ be the  weighted subgraph of $G_{\omega}$ induced by the vertex subset $V\ssm\{v_i\}$.
We write that $H_{\lambda}$ is
obtained by \emph{pruning an $r$-pathless leaf} from $G_{\omega}$.
A weighted subgraph $\Gamma_{\mu}$ of $G_{\omega}$ is obtained by \emph{pruning a sequence of $r$-pathless leaves} from $G_\omega$ if there exists a sequence of weighted graphs $G_\omega=G_{\omega^{(0)}}^{(0)},G_{\omega^{(1)}}^{(1)},\dots,G_{\omega^{(l)}}^{(l)}=\Gamma_{\mu}$ such that each $G_{\omega^{(i+1)}}^{(i+1)}$ is obtained by pruning an $r$-pathless leaf from $G_{\omega^{(i)}}^{(i)}$.
\end{defn}

\begin{ex}\label{ex130804}
In the weighted tree $G_{\omega}$ from Example~\ref{ex130827c}, the vertex $v_6$ is a 4-pathless leaf.
Pruning this leaf yields the following weighted path $H_{\lambda}$.
$$\xymatrix{
v_1\ar@{-}[r]^2&v_2\ar@{-}[r]^{1}&v_3\ar@{-}[r]^2&v_4\ar@{-}[r]^2&v_5
}$$
\end{ex}

Next, we  state some consequences of the existence of an $r$-pathless leaf in $G_{\omega}$.

\begin{lem}\label{lem130726x}
Let $H_{\lambda}$ be a weighted graph obtained by pruning a single $r$-pathless leaf $v_i$ from $G_{\omega}$.
\begin{enumerate}[\rm(a)]
\item \label{lem130726x1}
The set of $r$-paths in $G$ is the same as the set of $r$-paths in $H$.
\item \label{lem130726x2}
Assume that $(W,\sigma)$ is an $f$-weighted $r$-path vertex cover of $G_{\omega}$ such that $v_i\in W$.
Set $W'=W\ssm\{v_i\}$ and $\sigma'=\sigma|_{W'}$. Then $(W',\sigma')$ is an $f$-weighted $r$-path vertex cover of $G_{\omega}$.
\item \label{lem130726x3}
The minimal $f$-weighted $r$-path vertex covers of $G_{\omega}$ are the same as the 
minimal $f$-weighted $r$-path vertex covers of $H_{\lambda}$, so $G_\omega$ is $r$-path unmixed with respect to $f$ if and only if $H_{\lambda}$ is so.
\end{enumerate}
\end{lem}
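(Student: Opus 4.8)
The plan is to handle the three parts in sequence, each time exploiting the one structural feature supplied by the hypothesis: the vertex $v_i$ has degree $1$ in $G$ and lies on no $r$-path of $G$. For part~\eqref{lem130726x1}, I would note that $H$ is obtained from $G$ by deleting $v_i$ together with its unique incident edge, and deleting a vertex or an edge can never create a new $r$-path; so every $r$-path of $H$ is an $r$-path of $G$. Conversely, an $r$-path of $G$ through $v_i$ is forbidden by the hypothesis, so each $r$-path of $G$ avoids both $v_i$ and its incident edge and hence survives in $H$. This equality of $r$-path sets is the workhorse for the other two parts.

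For part~\eqref{lem130726x2}, the key remark is that, since $v_i$ lies on no $r$-path of $G$, the vertex $v_i$ can never be the covering vertex of any $r$-path of $G_\omega$ in the sense of Definition~\ref{defn130619b}. Thus, for an $f$-weighted $r$-path vertex cover $(W,\sigma)$ with $v_i\in W$ and for any $r$-path $v_{i_1}\dots v_{i_{r+1}}$ of $G$, the covering index $j$ has $v_{i_j}\neq v_i$, so $v_{i_j}\in W'=W\ssm\{v_i\}$ and $\sigma'(v_{i_j})=\sigma(v_{i_j})$ still verifies the required inequality; hence $(W',\sigma')$ covers every $r$-path of $G_\omega$.

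For part~\eqref{lem130726x3}, I would first note that the edges of $H$ are exactly those of $G$ other than the one at $v_i$, with $\lambda=\omega$ on them, so together with~\eqref{lem130726x1} an element $(W,\sigma)\in\Lambda$ with $v_i\notin W$ is an $f$-weighted $r$-path vertex cover of $G_\omega$ precisely when it is one of $H_\lambda$. If $(W,\sigma)$ is minimal for $G_\omega$, then $v_i\notin W$: otherwise part~\eqref{lem130726x2} would produce the strictly smaller cover $(W\ssm\{v_i\},\sigma|_{W\ssm\{v_i\}})<(W,\sigma)$ in the order of Definition~\ref{defn130619c}. Hence $(W,\sigma)$ is a cover of $H_\lambda$, and it is minimal there since any strictly smaller cover of $H_\lambda$ is also a cover of $G_\omega$. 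Conversely, given a minimal cover $(W,\sigma)$ of $H_\lambda$ and a hypothetical strictly smaller cover $(W'',\sigma'')<(W,\sigma)$ of $G_\omega$, I would delete $v_i$ from $(W'',\sigma'')$ using~\eqref{lem130726x2} to obtain a cover $(W''',\sigma''')$ of $G_\omega$ with $v_i\notin W'''$ and $(W''',\sigma''')\leq(W'',\sigma'')$; then $(W''',\sigma''')<(W,\sigma)$ and $(W''',\sigma''')$ is a cover of $H_\lambda$, contradicting minimality. So the minimal $f$-weighted $r$-path vertex covers of $G_\omega$ and of $H_\lambda$ coincide, and the assertion about $r$-path unmixedness with respect to $f$ (Definition~\ref{defn130619a}) follows at once by comparing cardinalities.

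The only point that needs a moment of care is the last contradiction in~\eqref{lem130726x3}: to pass from $(W''',\sigma''')\leq(W'',\sigma'')<(W,\sigma)$ to $(W''',\sigma''')<(W,\sigma)$ one uses that the relation of Definition~\ref{defn130619c} is a genuine partial order, in particular antisymmetric, which is routine. Beyond that, every step is an immediate consequence of the fact that an $r$-pathless leaf participates neither in any $r$-path nor in any covering condition, so I do not anticipate a real obstacle here.
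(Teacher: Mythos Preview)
Your proposal is correct and follows the same approach as the paper's proof, which is quite terse: the paper establishes part~\eqref{lem130726x3} by noting that the $f$-weighted $r$-path vertex covers of $H_\lambda$ are exactly those of $G_\omega$ avoiding $v_i$, and then says the minimality claim ``now follows''---you have simply spelled out that deduction in full. One minor redundancy: in your converse direction for~\eqref{lem130726x3}, since $(W,\sigma)$ is a cover of $H_\lambda$ you already have $v_i\notin W$, and $W''\subseteq W$ forces $v_i\notin W''$, so the deletion step via~\eqref{lem130726x2} is vacuous and $(W'',\sigma'')$ is itself a cover of $H_\lambda$.
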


\begin{proof}
\eqref{lem130726x1}
This follows by definition of $H$ since no $r$-paths in $G$ pass through $v_i$.

\eqref{lem130726x2}
Since no $r$-paths pass through $v_i$, this vertex does not cover any $r$-paths, so it can be removed.

\eqref{lem130726x3}
Combining parts~\eqref{lem130726x1}
and~\eqref{lem130726x2}, we conclude that the $f$-weighted $r$-path vertex covers of $H_{\lambda}$
are exactly the $f$-weighted $r$-path vertex covers $(W,\sigma)$ of $G_{\omega}$
such that $v_i\notin W$.
The desired conclusion about minimal elements now follows.
\end{proof}

The next definition is key for Theorem~\ref{intthm130831a}.

\begin{defn}\label{defn130721}
Let $\Gamma_{\mu}$ be a weighted graph. 
The \emph{$r$-path suspension} of the unweighted graph $\Gamma$ is the graph obtained by adding a new path of length $r$ to each vertex of $\Gamma$.
The new $r$-paths are called \emph{$r$-whiskers}.
A weighted graph $H_{\lambda}$ is a \emph{weighted $r$-path suspension} of $\Gamma_{\mu}$ provided that the unweighted graph $H$
is an $r$-path suspension of  $\Gamma$.
\end{defn}

\begin{ex}\label{ex130831b}
The weighted tree $G_{\omega}$ from Example~\ref{ex130827c} is a weighted $1$-path suspension
of the following weighted path. 
$$
\xymatrix{v_2\ar@{-}[r]^{1}&v_3\ar@{-}[r]^2&v_4
}$$
Examples of weighted $r$-path suspensions  of $G_{\omega}$ itself
are given by the following, where the edges of $G$ are drawn double for emphasis.
\begin{equation*}
\xymatrix@C=7.5mm@R=8mm{&&y_{1,1}\ar@{-}[d]_4&y_{2,1}\ar@{-}[d]_3&y_{3,1}\ar@{-}[d]_3&y_{4,1}\ar@{-}[d]_4&y_{5,1}\ar@{-}[d]_2\\
r=1&&v_1\ar@{=}[r]^2&v_2\ar@{=}[r]^{1}&v_3\ar@{=}[r]^2\ar@{=}[d]_3&v_4\ar@{=}[r]^2&v_5&&(G'_{\omega'})\\
&&&y_{6,1}\ar@{-}[r]^{2}&v_6&&\\
&&y_{1,2}\ar@{-}[d]_3&y_{2,2}\ar@{-}[d]_3&y_{3,2}\ar@{-}[d]_5&y_{4,2}\ar@{-}[d]_4&y_{5,2}\ar@{-}[d]_2& \\
&&y_{1,1}\ar@{-}[d]_4&y_{2,1}\ar@{-}[d]_3&y_{3,1}\ar@{-}[d]_3&y_{4,1}\ar@{-}[d]_4&y_{5,1}\ar@{-}[d]_2& \\
r=2&&v_1\ar@{=}[r]^2&v_2\ar@{=}[r]^{1}&v_3\ar@{=}[r]^2\ar@{=}[d]_3&v_4\ar@{=}[r]^2&v_5&&(G''_{\omega''})\\
&&y_{6,2}\ar@{-}[r]^3&y_{6,1}\ar@{-}[r]^{200}&v_6&&
}
\end{equation*}
\end{ex}

\begin{disc}\label{disc130831a}
A weighted graph $H_{\lambda}$ 
is an $r$-path suspension of another weighted graph $\Gamma_{\mu}$ if and only if $H$ has a sequence of pair-wise disjoint paths $p_1,p_2,\dots,p_{\beta}$  of length $r$ 
such that (after appropriately renaming the vertices of $H$) the 
the vertices of each $p_i$ can be ordered as $v_i,y_{i,1},\dots,y_{i,r}$ where $\deg(y_{i,k})=2$ for  $k=1,\ldots,r-1$, and $\deg(y_{i,r})=1$,
such that $V(H)=\{v_1,y_{1,1},\dots,y_{1,r},\ldots,v_\beta,y_{\beta,1},\dots,y_{\beta,r}\}$.
In this case, $\Gamma$ is the induced subgraph of $H$  associated to the subset $\{v_1,\ldots,v_{\beta}\}\subseteq V$.
When this is the case, we write  $S=A[X_1,Y_{1,1},\dots,Y_{1,r},\ldots,X_\beta,Y_{\beta,1},\dots,Y_{\beta,r}]$
instead of $A[X_1,\ldots,X_n]$ for the polynomial ring containing $I_{r,\max}(H_{\lambda})$.
\end{disc}

The following proposition contains one implication of Theorem~\ref{intthm130831a}.

\begin{prop}\label{lem130726}
Let $H_{\lambda}$ be an $r$-path suspension of the weighted graph $\Gamma_\mu$, with notation as in Remark~\ref{disc130831a},
such that for all $v_iv_j\in E(\Gamma)$ we have $\omega(v_iv_j)\leq \min\{\omega(v_iy_{i,1}),\omega(v_jy_{j,1})\}$.  Then  $I_{r,\max}(H_{\lambda})$ is 
Cohen-Macaulay.
\end{prop}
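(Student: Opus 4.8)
The plan is to show directly that $S/I_{r,\max}(H_\lambda)$ is Cohen-Macaulay by identifying an explicit maximal regular sequence on it, or better, by exhibiting it as a polynomial extension of a ring we can control. First I would set up the notation from Remark~\ref{disc130831a}: write $S=A[X_1,Y_{1,1},\dots,Y_{1,r},\dots,X_\beta,Y_{\beta,1},\dots,Y_{\beta,r}]$, so $\dim S = \beta(r+1)$. The key observation is that $I_{r,\max}(H_\lambda)$ is generated by the monomials coming from $r$-paths in $H$, and these split into two types: the $r$-whiskers $v_i y_{i,1}\cdots y_{i,r}$ (one for each $i\in\{1,\dots,\beta\}$), and the $r$-paths that use an edge of $\Gamma$ and hence enter some whisker only partway. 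Using the hypothesis $\omega(v_iv_j)\leq\min\{\omega(v_iy_{i,1}),\omega(v_jy_{j,1})\}$, I would show that every generator of the second type is divisible by the $X_{i}$-power appearing in the whisker generator at $v_i$ (or $v_j$); more precisely, the exponent of $X_i$ in the whisker generator is $\omega(v_iy_{i,1})$, while the exponent of $X_i$ in a $\Gamma$-edge path through $v_i$ is at most $\max\{\omega(v_iv_j),\omega(v_iy_{i,1})\}=\omega(v_iy_{i,1})$ — wait, this needs the inequality in the other direction, so I would instead argue that such a path's generator is divisible by the whisker monomial $Y_{i,1}^{c_1}\cdots Y_{i,r}^{c_r}$ times a suitable $X_i$-power, making it redundant modulo the whisker generators after a change of perspective.

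The cleaner route, which I expect to be the actual proof, is to compute the m-irreducible (primary) decomposition via Theorem~\ref{prop130725} and check that all minimal weighted $r$-path vertex covers have the same cardinality $\beta$ AND that $S/I$ is in fact Cohen-Macaulay, not merely unmixed. For the dimension count: Lemma~\ref{lem130726x} (iterated) reduces $r$-path vertex covers of $H$ to a combinatorial problem on the whiskers, and one shows the minimal weighted $r$-path vertex covers of $H_\lambda$ are exactly the sets obtained by choosing, for each $i$, one vertex on the $i$-th whisker $\{v_i,y_{i,1},\dots,y_{i,r}\}$ with an appropriate weight — this uses the min-hypothesis crucially to guarantee that picking the $v_i$'s with weight $\omega(v_iy_{i,1})$ already covers every $\Gamma$-edge path, so no $\Gamma$-edge path imposes an extra constraint that could force a smaller or larger cover. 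Hence every minimal cover has cardinality exactly $\beta$, giving $\dim S/I_{r,\max}(H_\lambda)=\beta(r+1)-\beta = \beta r$.

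To upgrade unmixedness to Cohen-Macaulayness I would induct on $\beta$, peeling off one whisker at a time. Let $z=y_{\beta,r}$, the degree-one end of the last whisker. I would use the short exact sequence $0\to S/(I:X_\beta^{?}) \to \cdots$ — more practically, use the element $Y_{\beta,r}$ or a linear form: consider reducing modulo $Y_{\beta,r}$ and modulo the whisker generator, set up a Mayer–Vietoris / colon-ideal short exact sequence $0\to S/(I_{r,\max}(H_\lambda):m)\xrightarrow{\cdot m} S/I_{r,\max}(H_\lambda)\to S/(I_{r,\max}(H_\lambda)+(m))\to 0$ where $m$ is the whisker monomial at the last whisker, show $(I:m)$ and $I+(m)$ are each (up to polynomial extension) weighted $r$-path ideals of graphs with fewer whiskers (pruning $r$-pathless leaves, via Lemmas~\ref{lem130726x},~\ref{lem130728},~\ref{lem140801a}), apply the inductive hypothesis to both, and conclude via the depth lemma that the middle term has depth equal to its dimension. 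The main obstacle is bookkeeping the colon ideal $(I_{r,\max}(H_\lambda):m)$: one must verify it is again of the required form (a weighted $r$-path ideal of an $r$-path suspension satisfying the min-condition, on $\beta-1$ whiskers, tensored with a polynomial ring in the freed variables), and this is where the hypothesis $\omega(v_iv_j)\leq\min\{\omega(v_iy_{i,1}),\omega(v_jy_{j,1})\}$ must be used to see that removing the last whisker does not create new path-generators with awkward exponents. I expect the base case $\beta=1$ (a single $r$-whisker, so $I_{r,\max}(H_\lambda)$ is a principal monomial ideal in $r+1$ variables) to be immediate since $S/(f)$ is a complete intersection, hence Cohen-Macaulay.
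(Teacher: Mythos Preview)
Your inductive strategy has a concrete gap at the short exact sequence step. You propose
\[
0\to S/(I_{r,\max}(H_\lambda):m)\xrightarrow{\cdot m} S/I_{r,\max}(H_\lambda)\to S/(I_{r,\max}(H_\lambda)+(m))\to 0
\]
with $m$ the whisker monomial at the $\beta$-th whisker. But that monomial is one of the defining generators of $I_{r,\max}(H_\lambda)$, so $(I_{r,\max}(H_\lambda):m)=S$ and $I_{r,\max}(H_\lambda)+(m)=I_{r,\max}(H_\lambda)$; the sequence degenerates and gives no information. The alternative you float, taking $m=Y_{\beta,r}$, does give a nontrivial sequence, but then $S/(I+(Y_{\beta,r}))$ is the $r$-path ideal of $H\smallsetminus\{y_{\beta,r}\}$, in which the $\beta$-th whisker has length $r-1$; this graph is \emph{not} an $r$-path suspension, so your inductive hypothesis does not apply. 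You would need a different element (something like $X_\beta$ is more promising) together with a careful analysis of both $I+(m)$ and $(I:m)$, and you have not supplied this. Separately, your description of the minimal weighted $r$-path vertex covers as ``choose one vertex from each whisker'' is not literally correct: choosing the leaves $y_{1,r},\dots,y_{\beta,r}$ fails to cover any $r$-path that stays inside $\Gamma$ (or, when $r=1$, any edge of $\Gamma$). The cardinality claim is right, but the stated reason is not.

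The paper takes a completely different and much shorter route: polarization. One polarizes $I:=I_{r,\max}(H_\lambda)$ to a squarefree ideal $\widetilde I$ in a larger polynomial ring $\widetilde S$, and then observes that $\widetilde I$ is \emph{also} the polarization of a second ideal $J$ in a small polynomial ring $T=A[X_{1,1},\dots,X_{\beta,1}]$ on only $\beta$ variables, obtained by collapsing all the $Y_{i,j}$-variables on the $i$-th whisker into $X_{i,1}$. The hypothesis $\omega(v_iv_j)\leq\min\{\omega(v_iy_{i,1}),\omega(v_jy_{j,1})\}$ is exactly what guarantees that this collapse is a legitimate depolarization (the needed inequalities on exponents hold). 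Because each whisker contributes a pure power $X_{i,1}^{N_i}$ to $J$, the quotient $T/J$ is artinian, hence Cohen--Macaulay; since $S/I$ and $T/J$ are both graded specializations of $\widetilde S/\widetilde I$, Cohen--Macaulayness transfers back to $S/I$. No induction, no colon ideals.
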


\begin{proof}
As in the proof of~\cite[Lemma 5.3]{paulsen:eiwg}, we polarize the ideal $I:=I_{r,\max}(H_{\lambda})$ to obtain a new ideal $\wti I$ in a new polynomial ring $\wti S$.
We then show that $\wti I$ is the polarization of another monomial ideal $J$ in another polynomial ring $T$ such that $T/J$ is 
artinian. In particular, $T/J$ is Cohen-Macualay. Since $T/J$ and $S/I$ are graded specializations of $\wti S/\wti I$, it follows that
$\wti S/\wti I$ and $S/I$ are also Cohen-Macaulay.

In preparation, we set some notation 
\begin{align*}
a_i:=&\omega(v_iy_{i,1}) && \mbox{ for } i=1,\dots,\beta \\
a_{i,1}:=&\max\{\omega(v_iy_{i,1}),\omega(y_{i,1}y_{i,2})\} && \mbox{ for } i=1,\dots,\beta \\
a_{i,j}:=&\max\{\omega(y_{i,j-1}y_{i,j}),\omega(y_{i,j}y_{i,j+1})\} && \mbox{ for } i=1,\dots,\beta \mbox{ and } j=2,\dots,r-1 \\
a_{i,r}:=&\omega(y_{i,r-1}y_{i,r}) && \mbox{ for } i=1,\dots,\beta \\
t_{i,j}=&\omega(y_{i,j-1}y_{i,j})  && \mbox{ for } i=1,\dots,\beta \mbox{ and } j=2,\dots,r-1 \\
b_{p,q,r}=&\max\{\omega(v_pv_q),\omega(v_qv_r)\} && \mbox{ for all $2$-paths } v_pv_qv_r  \mbox{ in $\Gamma$} \\
c_{i,j}=&\omega(v_iv_j) && \mbox{ for all edges } v_iv_j  \mbox{ in $\Gamma$}
\end{align*}
The polynomial ring $\wti S$ has coefficients in $A$ with the following list of variables.
\begin{align*}
&X_{1,1},\ldots, X_{1,a_1},Y_{1,1,1},\ldots, Y_{1,1,a_{1,1}},Y_{1,2,1},\ldots, Y_{1,2,a_{1,2}},\ldots, Y_{1,r,1},\ldots, Y_{1,r,a_{1,r}},\\
&X_{2,1},\ldots, X_{2,a_2},Y_{2,1,1},\ldots, Y_{2,1,a_{2,1}},Y_{2,2,1},\ldots, Y_{2,2,a_{2,2}},\ldots, Y_{2,r,1}\ldots Y_{2,r,a_{2,r}},\ldots, \\
&X_{\beta,1},\ldots, X_{\beta,a_\beta},Y_{\beta,1,1},\ldots, Y_{\beta,1,a_{\beta,1}},Y_{\beta,2,1},\ldots, Y_{\beta,2,a_{\beta,2}},\ldots, Y_{\beta,r,1}\ldots Y_{\beta,r,a_{\beta,r}}
\end{align*}

To polarize the ideal $I$, we need to polarize the generators, which correspond to the $r$-paths in $H$.
There are four types of $r$-paths in $H$: paths completely contained in an $r$-whisker (that is,  exactly  an $r$-whisker); paths partially in a $r$-whisker and partially in $\Gamma$; paths that start in a $r$-whisker, run through part of $\Gamma$, then end in another $r$-whisker; and paths that are completely in $\Gamma$.

First, consider an $r$-whisker $v_iy_{i,1}\dots y_{i,r}$.  
The generator associated to this path in $I$ is $X_i^{a_i}Y_{i,1}^{a_{i,1}}Y_{i,2}^{a_{i,2}}\cdots Y_{i,r}^{a_{i,r}}$.
When we polarize this generator, we obtain the following generator of $\wti I$.
\begin{equation}\label{eq130901a}
X_{i,1}\cdots X_{i,a_i}Y_{i,1,1}\cdots Y_{i,1,a_{i,1}}Y_{i,2,1}\cdots Y_{i,2,a_{i,2}}\cdots Y_{i,r,1}\cdots Y_{i,r,a_{i,r}}
\end{equation}

Next, consider an $r$-path $v_{i_1}v_{i_2}\cdots v_{i_p}v_jy_{j,1}\cdots y_{j,k}$ that starts in $\Gamma$ and ends in an $r$-whisker.  
Note that here we have $p+k=r$.  
The generator of $I$ associated to this path is $X_{i_1}^{c_{i_1,i_2}}X_{i_2}^{b_{i_{1},i_2,i_3}}\cdots X_{i_p}^{b_{i_{p-1},i_p,j}}X_j^{a_j}Y_{j,1}^{a_{j,1}}\cdots Y_{j,k-1}^{a_{j,k-1}}Y_{j,k}^{t_{j,k}}$.
When we polarize this generator for $I$, we obtain the next generator for $\wti I$.
\begin{multline}\label{eq130901b}
X_{i_1,1}\cdots X_{i_1,c_{i_1,i_2}}X_{i_2,1}\cdots X_{i_2,b_{i_{1},i_2,i_3}}\cdots X_{i_p,1}\cdots X_{i_p,b_{i_{p-1},i_p,j}}\\
\cdot X_{j,1}\cdots X_{j,a_j}Y_{j,1,1}\cdots  Y_{j,1,a_{j,1}}\cdots Y_{j,k-1,1}\cdots Y_{j,k-1,a_{j,k-1}}Y_{j,k,1}\cdots Y_{j,k,t_{j,k}}
\end{multline}
Observe that the assumption  $\omega(v_iv_j)\leq \min\{\omega(v_iy_{i,1}),\omega(v_jy_{j,1})\}$ for all $v_iv_j\in E(\Gamma)$ implies that we have 
$c_{i_1,i_2}\leq a_{i_1}$. Similarly, we have
$b_{i_{1},i_2,i_3}\leq a_{i_2}$, and the inequality $t_{j,k}\leq a_{j,k}$ is by construction.
Thus, the generator~\eqref{eq130901b} is in $\wti S$.

Next, consider an $r$-path $y_{j,q}\dots y_{j,1}v_jv_{m_1}\dots v_{m_l}v_iy_{i,1}\dots y_{i,p}$ that starts in an $r$-whisker, runs through part of $\Gamma$, and  ends in another $r$-whisker.  Note that we have $l\geq 0$ and  $q+l+p+1=r$. The generator in $I$ associated to this type of path is
the following.
$$Y^{t_{j,q}}_{j,q}Y_{j,q-1}^{a_{j,q-1}}\cdots Y_{j,1}^{a_{j,1}}X_j^{a_j}X_{m_1}^{b_{j,m_1,m_2}}\cdots X_{m_l}^{b_{m_{l-1},m_l,i}}X_i^{a_i}Y_{i,1}^{a_{i,1}}\cdots Y_{i,p-1}^{a_{i,p-1}}Y_{i,p}^{t_{i,p}}$$  
When we polarize this generator we obtain the next generator for $\wti I$.
\begin{multline}\label{eq130901c}
Y_{j,q,1}\cdots Y_{j,q,t_{j,q}}Y_{j,q-1,1}\cdots Y_{j,q-1,a_{j,q-1}}\cdots Y_{j,1,1}\cdots Y_{j,1,a_{j,1}}\\
\cdot X_{j,1}\cdots X_{j,a_j} X_{m_1,1}\cdots X_{m_1,b_{j,m_1,m_2}}\cdots X_{m_l,1}\cdots X_{m_l,b_{m_{l-1},m_l,i}}X_{i,1}\cdots X_{i,a_i}\\
\cdot Y_{i,1,1}\cdots Y_{i,1,a_{i,1}}\cdots Y_{i,p-1,1}\cdots Y_{i,p-1,a_{i,p-1}}Y_{i,p,1}\cdots Y_{i,p,t_{i,p}}
\end{multline}

For the last type of generator, consider an $r$-path $v_{i_1}\dots v_{i_{r+1}}$ entirely in $\Gamma$.  The generator in $I$ associated to this path is 
the following.
$$X_{i_1}^{c_{i_1,i_2}}X_{i_2}^{b_{i_1,i_2,i_3}}\cdots X_{i_r}^{b_{i_{r-1},i_r,i_{r+1}}}X_{i_{r+1}}^{c_{i_r,i_{r+1}}}$$
When we polarize this generator we obtain the next generator for $\wti I$.
\begin{equation}\label{eq130901d}
X_{i_1,1}\cdots X_{i_1,c_{i_1,i_2}}X_{i_2,1}\cdots X_{i_2,b_{i_1,i_2,i_3}}\cdots X_{i_{r+1},1}\cdots X_{i_{r+1},c_{i_r,i_{r+1}}}
\end{equation}

Set $T=A[X_{1,1},\ldots,X_{\beta,1}]$, and let $J$ be the monomial ideal of $T$ with the following generators.
For each $r$-whisker $v_iy_{i,1}\dots y_{i,r}$, include the following generator.
\begin{equation}\label{eq130901e}
X_{i,1}^{a_i+a_{i,1}+\cdots+a_{i,r}}
\end{equation}
For each $r$-path $v_{i_1}v_{i_2}\cdots v_{i_p}v_jy_{j,1}\cdots y_{j,k}$ that starts in $\Gamma$ and ends in an $r$-whisker, include the next generator. 
\begin{equation}\label{eq130901f}
X_{i_1,1}^{c_{i_1,i_2}}X_{i_2,1}^{b_{i_{1},i_2,i_3}}\cdots X_{i_p,1}^{b_{i_{p-1},i_p,j}}X_{j,1}^{a_j+a_{j,1}+\cdots +a_{j,k-1}+t_{j,k}}
\end{equation}
For each $r$-path $y_{j,q}\dots y_{j,1}v_jv_{m_1}\dots v_{m_l}v_iy_{i,1}\dots y_{i,p}$ that starts in an $r$-whisker, runs through part of $\Gamma$, and  ends in another $r$-whisker, include the next generator.
\begin{equation}\label{eq130901g}
X_{j,1}^{t_{j,q}+a_{j,q-1}+\cdots +a_{j,1}+a_j}
X_{m_1,1}^{b_{j,m_1,m_2}}\cdots X_{m_l,1}^{b_{m_{l-1},m_l,i}}X_{i,1}^{a_i+a_{i,1}+\cdots +a_{i,p-1}+t_{i,p}}
\end{equation}
For each $r$-path $v_{i_1}\dots v_{i_{r+1}}$ entirely in $\Gamma$, include the next generator.
\begin{equation}\label{eq130901h}
X_{i_1,1}^{c_{i_1,i_2}}X_{i_2,1}^{b_{i_1,i_2,i_3}}\cdots X_{i_{r+1},1}^{c_{i_r,i_{r+1}}}
\end{equation}
It is straightforward to show that the polarization of $J$ is exactly $\wti I$: for $n=1,2,3,4$, the polarization of the generator (\ref{lem130726}.$n+4$) of $J$ is exactly the generator 
(\ref{lem130726}.$n$) of $\wti I$.
Since $J$ contains a power of each of the variables in $T$, namely~\eqref{eq130901e}, we conclude that $T/J$ is artinian.
Thus, the first paragraph of this proof implies that $S/I$ is Cohen-Macaulay.
\end{proof}

\begin{ex}\label{ex130831d}
For the weighted graph $G_{\omega}$ in Example~\ref{ex130827c}, Proposition~\ref{lem130726} shows that $I_{1,\max}(G_{\omega})$ is Cohen-Macaulay,
and similarly for $I_{2,\max}(G''_{\omega''})$
in Example~\ref{ex130831b}. 
See also Examples~\ref{ex130831e} and~\ref{ex130901a}.
\end{ex}

Note that the ideals $I_{r,\max}(G_\omega)$ and $I_{r,\max}(H_{\lambda})$ in the next result live in different polynomial rings.

\begin{lem}\label{lem130726a}
Let $H_{\lambda}$ be a weighted graph obtained by pruning a sequence of $r$-pathless leaves from $G_\omega$.
\begin{enumerate}[\rm(a)]
\item \label{lem130726a1}
The ideals $I_{r,\max}(G_\omega)$ and $I_{r,\max}(H_{\lambda})$ have the same generators. 
\item \label{lem130726a2}
The ideal $I_{r,\max}(G_\omega)$ is m-unmixed if and only if $I_{r,\max}(H_{\lambda})$ is so.
\item \label{lem130726a3}
The ideal $I_{r,\max}(G_\omega)$ is Cohen-Macaulay if and only if $I_{r,\max}(H_{\lambda})$ is so.
\end{enumerate}
\end{lem}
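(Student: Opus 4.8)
The plan is to reduce at once to the case of pruning a single $r$-pathless leaf and then dispatch the three parts using Lemma~\ref{lem130726x} for the combinatorial input. Since $H_\lambda$ arises from $G_\omega$ through a finite chain $G_\omega = G^{(0)}_{\omega^{(0)}}, \dots, G^{(l)}_{\omega^{(l)}} = H_\lambda$ of single $r$-pathless-leaf prunings, each of the three assertions follows by induction on $l$ once the case $l=1$ is settled. So I take $H_\lambda$ to be obtained from $G_\omega$ by pruning one $r$-pathless leaf $v_i$, and I write $S = S'[X_i]$, where $S' = A[X_1,\dots,X_{i-1},X_{i+1},\dots,X_n]$ is the polynomial ring containing $I_{r,\max}(H_\lambda)$.

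For part (a), Lemma~\ref{lem130726x}\eqref{lem130726x1} gives that $G$ and $H$ have exactly the same $r$-paths, and (by definition of an $r$-pathless leaf) none of them meets $v_i$. Since $\lambda$ agrees with $\omega$ on the common edge set of $H$, the monomial attached to a fixed $r$-path by the recipe in Definition~\ref{defn130619f} is the same whether it is computed in $G_\omega$ or in $H_\lambda$. Hence $I_{r,\max}(G_\omega)$ and $I_{r,\max}(H_\lambda)$ have literally the same monomial generators, and $X_i$ divides none of them; in particular $I_{r,\max}(G_\omega) = I_{r,\max}(H_\lambda)S$.

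For part (b), Remark~\ref{rmk130806} says that m-unmixedness of $I_{r,\max}(G_\omega)$ is equivalent to $G_\omega$ being $r$-path unmixed with respect to $\max$, and likewise for $I_{r,\max}(H_\lambda)$ and $H_\lambda$; and Lemma~\ref{lem130726x}\eqref{lem130726x3} asserts that $G_\omega$ is $r$-path unmixed with respect to $\max$ if and only if $H_\lambda$ is. Chaining these three equivalences proves the claim.

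For part (c), part (a) furnishes a graded ring isomorphism $S/I_{r,\max}(G_\omega) \cong \big(S'/I_{r,\max}(H_\lambda)\big)[X_i]$, so the assertion reduces to the standard fact that a finitely generated graded algebra $R$ over the field $A$ is Cohen-Macaulay if and only if the polynomial ring $R[X]$ is: the variable $X$ is a nonzerodivisor on $R[X]$ lying in its homogeneous maximal ideal $\mathfrak m$ with $R[X]/(X)\cong R$, whence $\depth R[X] = \depth R + 1$ and $\dim R[X] = \dim R + 1$. (One could instead simply cite the analogous fact for Noetherian rings.) I expect no genuine obstacle here: the substantive point — that pruning an $r$-pathless leaf disturbs neither the generators nor the covering combinatorics — is already isolated in Lemma~\ref{lem130726x}, and part (c) is then just the passage to a one-variable polynomial extension; the only thing needing a little care is the bookkeeping across the change of ambient polynomial ring, i.e.\ spelling out that the equality of generating sets from part (a) is precisely what exhibits $S/I_{r,\max}(G_\omega)$ as a polynomial ring in $X_i$ over $S'/I_{r,\max}(H_\lambda)$.
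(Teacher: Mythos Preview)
Your proof is correct and follows essentially the same route as the paper: reduce by induction to a single pruning, use Lemma~\ref{lem130726x}\eqref{lem130726x1} for part~(a), Lemma~\ref{lem130726x}\eqref{lem130726x3} (the paper cites Theorem~\ref{prop130725}\eqref{prop130725ii} directly rather than via Remark~\ref{rmk130806}, but that is immaterial) for part~(b), and the polynomial-extension isomorphism $(S'/I_{r,\max}(H_\lambda))[X_i]\cong S/I_{r,\max}(G_\omega)$ for part~(c).
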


\begin{proof}
Arguing by induction on the number of $r$-pathless leaves being pruned from $G_{\omega}$, we assume that 
$H_{\lambda}$ is
obtained by pruning a single $r$-pathless leaf $v_i$ from $G_{\omega}$.

\eqref{lem130726a1}
By Lemma~\ref{lem130726x}\eqref{lem130726x1}, the set of $r$-paths in $G$ is the same as the set of $r$-paths in $H$, and 
$\lambda(e)=\omega(e)$ for each edge $e\in E(H)\subseteq E(G)$.
The claim about the generators now follows directly.  

\eqref{lem130726a2}
This follows from Theorem~\ref{prop130725}\eqref{prop130725ii} and Lemma~\ref{lem130726x}\eqref{lem130726x3}.

\eqref{lem130726a3}
Part~\eqref{lem130726a1} implies that
$(S'/I_{r,\max}(H_{\lambda}))[X]\cong S/I_{r,\max}(G_\omega)$, 
where $S':=A[X_1,\ldots,X_{i-1},X_{i+1},\ldots,X_n]$.
It follows 
that $S/I_{r,\max}(G_\omega)$ is Cohen-Macaulay if and only if $S'/I_{r,\max}(H_{\lambda})$ is Cohen-Macaulay,
as desired.  
\end{proof}

The next result compares directly to Theorem~\ref{intthm130831a} from the introduction, though it does not assume that $G$ is a tree.

\begin{prop}\label{prop130725b}
Assume that $H_\lambda$ is obtained by pruning a sequence of $r$-pathless leaves from $G_\omega$ 
and that $H_\lambda$ is an $r$-path suspension of a weighted graph $\Gamma_\mu$.  
With notation as in Remark~\ref{disc130831a}, the following conditions are equivalent:
\begin{enumerate}[\rm (i)]
\item\label{prop130725bi} $I_{r,\max}(G_\omega)$ is Cohen-Macaulay;
\item\label{prop130725bii} $I_{r,\max}(G_\omega)$ is m-unmixed; and
\item\label{prop130725biii} for all $v_iv_j\in E(\Gamma_\mu)$ we have $\omega(v_iv_j)\leq\min\{\omega(v_iy_{i,1}),\omega(v_jy_{j,1})\}$.
\end{enumerate}
\end{prop}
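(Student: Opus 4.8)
The plan is to prove the implications $\eqref{prop130725biii}\Rightarrow\eqref{prop130725bi}\Rightarrow\eqref{prop130725bii}\Rightarrow\eqref{prop130725biii}$. Condition~\eqref{prop130725biii} is exactly the hypothesis of Proposition~\ref{lem130726}, so it yields that $I_{r,\max}(H_\lambda)$ is Cohen-Macaulay, and then Lemma~\ref{lem130726a}\eqref{lem130726a3} transfers this to $I_{r,\max}(G_\omega)$; this gives $\eqref{prop130725biii}\Rightarrow\eqref{prop130725bi}$. For $\eqref{prop130725bi}\Rightarrow\eqref{prop130725bii}$, a Cohen-Macaulay quotient of a polynomial ring over a field is unmixed, hence $I_{r,\max}(G_\omega)$ is m-unmixed by Remark~\ref{rmk130827d}; alternatively, by Theorem~\ref{prop130725}\eqref{prop130725ii} the associated primes of $I_{r,\max}(G_\omega)$ are the ideals $(X_k\mid v_k\in W)S$ with $(W,\sigma)$ ranging over the minimal weighted $r$-path vertex covers of $G_\omega$, each of dimension $n-|W|$, so the Cohen-Macaulay condition forces all the cardinalities $|W|$ to coincide.

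The content is in $\eqref{prop130725bii}\Rightarrow\eqref{prop130725biii}$, which I would prove by contraposition. Assume~\eqref{prop130725biii} fails and fix $v_iv_j\in E(\Gamma)$ with $\omega(v_iv_j)>\min\{\omega(v_iy_{i,1}),\omega(v_jy_{j,1})\}$; after exchanging the roles of $i$ and $j$ we may assume $\omega(v_iy_{i,1})\le\omega(v_jy_{j,1})$, so that $\omega(v_iy_{i,1})<\omega(v_iv_j)$. By Lemma~\ref{lem130726a}\eqref{lem130726a2} together with Remark~\ref{rmk130806}, it suffices to exhibit two minimal weighted $r$-path vertex covers of $H_\lambda$ of different cardinalities. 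The first is obtained by applying Lemma~\ref{prop130721} to the constant-weight-$1$ pair on $V(\Gamma)=\{v_1,\dots,v_\beta\}$, which is a weighted $r$-path vertex cover of $H_\lambda$ by Example~\ref{rmk130827a}: indeed every $r$-path of $H$ meets $V(\Gamma)$, since the vertices of $H$ outside $V(\Gamma)$ are the whisker vertices $y_{k,\ell}$ and no $r+1$ of them span a path in $H$. The minimal cover it dominates still has underlying set $V(\Gamma)$, because each $v_k$ is the only vertex of $V(\Gamma)$ on the whisker path $v_ky_{k,1}\cdots y_{k,r}$ and so cannot be dropped; hence this first cover has cardinality $\beta$.

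For the second cover I would start from the underlying set $\bigl(V(\Gamma)\setminus\{v_j\}\bigr)\cup\{y_{i,r},y_{j,r}\}$, which has cardinality $\beta+1$, and maximize the weights using Lemma~\ref{prop130721}. The point is that, with $v_j$ removed from the cover and $\sigma(y_{i,r}),\sigma(y_{j,r})$ necessarily small (they are endpoints of their whiskers), every $r$-path that uses the edge $v_iv_j$ and otherwise lies in the whisker at $v_i$ is covered, among the vertices of the cover, only by $v_i$, and at $v_i$ such a path has weight bound $\omega(v_iv_j)$; re-maximizing therefore pushes $\sigma(v_i)$ up to a value strictly larger than $\omega(v_iy_{i,1})$. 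Consequently $v_i$ no longer covers the whisker path $v_iy_{i,1}\cdots y_{i,r}$, which is then covered only by $y_{i,r}$; likewise the whisker path $v_jy_{j,1}\cdots y_{j,r}$ is covered only by $y_{j,r}$; and no $v_k$ can be removed for the reason given above. So the resulting cover is minimal of cardinality $\beta+1$, whence $H_\lambda$ is $r$-path mixed and $I_{r,\max}(G_\omega)$ is m-mixed, as required.

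The hard part is the verification that this second cover really is minimal --- specifically, that after re-maximization the weight $\sigma(v_i)$ is genuinely forced to exceed $\omega(v_iy_{i,1})$, so that $y_{i,r}$ is not redundant. This requires checking, against all four kinds of $r$-paths in $H$ catalogued in the proof of Proposition~\ref{lem130726} (paths inside a single whisker, paths running from $\Gamma$ into a whisker, paths joining two whiskers through $\Gamma$, and paths lying entirely in $\Gamma$), that no path forces $\sigma(v_i)$ back down to $\omega(v_iy_{i,1})$ or below; the delicate situation is when $v_i$ has a $\Gamma$-neighbor $v_k$ with $\omega(v_iv_k)$ small, and here one may need to choose the failing edge $v_iv_j$, or the vertex $v_i$ to be promoted, more carefully (for instance so that $\omega(v_iy_{i,1})$ is as small as possible among all vertices) in order to run the argument uniformly. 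This is precisely the place where the strict inequality $\omega(v_iy_{i,1})<\omega(v_iv_j)$ is used.
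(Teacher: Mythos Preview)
Your overall strategy matches the paper's exactly: the implications $\eqref{prop130725biii}\Rightarrow\eqref{prop130725bi}\Rightarrow\eqref{prop130725bii}$ go as you describe, and for $\eqref{prop130725bii}\Rightarrow\eqref{prop130725biii}$ the paper also exhibits two minimal weighted $r$-path vertex covers of $H_\lambda$ of sizes $\beta$ and $\beta+1$, the second one supported on exactly the set $W=(V(\Gamma)\setminus\{v_j\})\cup\{y_{i,r},y_{j,r}\}$ that you write down. However, the difficulty you flag in your last paragraph is a genuine gap, and your proposed remedy (an extremal choice of the edge $v_iv_j$ or of $v_i$) does not close it. If you initialize all weights on $W$ to $1$ and invoke Lemma~\ref{prop130721}, then $v_i^1$ already covers its own whisker $v_iy_{i,1}\cdots y_{i,r}$, so $y_{i,r}$ can be discarded in the vertex-removal phase of that lemma and you are left with a minimal cover of size~$\beta$. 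Re-maximizing weights first does not help either: if some $\Gamma$-neighbour $v_k$ of $v_i$ has $\omega(v_kv_i)\le a$ and $\omega(v_ky_{k,1})>\omega(v_kv_i)$ (which is compatible with~\eqref{prop130725biii} holding at the edge $v_kv_i$), then after $\sigma(v_k)$ is pushed above $\omega(v_kv_i)$ the path $v_kv_iy_{i,1}\cdots y_{i,r-1}$ pins $\sigma(v_i)$ at or below~$a$.

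The paper's resolution is simpler than any extremal choice: do not start with weight~$1$ on $v_i$, but with $\sigma(v_i)=b:=\omega(v_iv_j)$ from the outset (keeping $\sigma(v_k)=1$ for $k\neq i,j$ and giving $y_{i,r},y_{j,r}$ their endpoint edge-weights). One checks directly that this $(W,\sigma)$ is still a weighted $r$-path vertex cover: every $r$-path not already handled by some $v_k^1$ with $k\neq i,j$ is confined to $\{v_i,v_j\}$ together with the two whiskers, and each such path either meets $y_{i,r}$ or $y_{j,r}$ at an endpoint, or meets $v_i$ with weight $\max\{a,b\}=b$. Now any minimal $(W',\sigma')\le(W,\sigma)$ automatically has $\sigma'(v_i)\ge b>a$, so $v_i$ cannot cover its own whisker and $y_{i,r}$ is forced into $W'$; the remaining memberships ($y_{j,r}$, each $v_k$ with $k\neq i,j$, and $v_i$ itself via the path $v_jv_iy_{i,1}\cdots y_{i,r-1}$) follow exactly as in your outline. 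Pre-loading $\sigma(v_i)=b$ is the one missing idea.
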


\begin{proof}
The case $r=1$ is handled in~\cite[Theorem 5.7]{paulsen:eiwg}, so we assume that $r\geq 2$ for the remainder of the proof.
The implication~\eqref{prop130725bi}$\implies$\eqref{prop130725bii} always holds.  

\eqref{prop130725bii}$\implies$\eqref{prop130725biii}  Assume that $I_{r,\max}(G_\omega)$ is m-unmixed.
It follows from Lemma~\ref{lem130726a}\eqref{lem130726a2} that $I_{r,\max}(H_\lambda)$ is also unmixed.  
From an analysis of the $r$-paths of $H$ as in the proof of Proposition~\ref{lem130726},
it is straightforward to show that $V(\Gamma_\mu)$ is a minimal $r$-path vertex cover of $H$.
(It covers all the paths, and the $r$-whiskers show that it is minimal.)
Let $\tau\colon V(\Gamma_\mu)\to\bbn$ be the constant function $\tau(v_i)=1$.
Lemma~\ref{prop130721} implies that there is a minimal weighted $r$-path vertex cover $(W'',\sigma'')$ of $H_{\lambda}$
such that $(W'',\sigma'')\leq (V(\Gamma_\mu),\tau)$. The minimality of $V(\Gamma_\mu)$ implies that
$W''=V(\Gamma_\mu)$, so $(V(\Gamma_\mu),\sigma'')$ is a minimal weighted $r$-path vertex cover  of $H_{\lambda}$.
The unmixedness condition implies that every minimal weighted $r$-path vertex cover of $H_{\lambda}$ has cardinality $|V(\Gamma_\mu)|$.

We proceed by by contradiction.
Suppose that there is an edge $v_iv_j\in E(\Gamma_\mu)$ such that $\omega(v_iv_j)>\min\{\omega(v_iy_{i,1}),\omega(v_jy_{j,1})\}$.  
We produce a contradiction by showing that there exists a minimal weighted $r$-path vertex cover $(W,\sigma)$ of $H_\lambda$ such that $|W|>|V(\Gamma_\mu)|$.  
Assume by symmetry that 
$$a:=\omega(v_iy_{i,1})=\min\{\omega(v_iy_{i,1}),\omega(v_jy_{j,1})\}<\omega(v_iv_j)=:b.$$
Set $c=\omega(v_jy_{j,1})$ and $a':=\omega(y_{i,r-1}y_{i,r})$ and $c':=\omega(y_{j,r-1}y_{j,r})$.
The following digram 
(where the column represents $\Gamma$, and the rows represent the $r$-whiskers in $H$)
is our guide for constructing an approximation of $(W,\sigma)$.
$$\xymatrix{
*+[F]{v_i^b}\ar@{-}[d]_b\ar@{-}[r]^a&y_{i,1}\ar@{-}[r]&\cdots\ar@{-}[r]^{a'}
&*+[F]{y_{i,r}^{a'}}\\
v_j\ar@{-}[d]\ar@{-}[r]^c&y_{j,1}\ar@{-}[r]&\cdots\ar@{-}[r]^{c'}&*+[F]{y_{j,r}^{c'}}\\
\vdots\ar@{-}[d]&&&\\
*+[F]{v_k^{1}}\ar@{-}[r]&y_{k,1}\ar@{-}[r]&\cdots\ar@{-}[r]&y_{k,r}
}$$
Set $W=\{v_k|k\neq j\}\cup\{y_{i,r},y_{j,r}\}$ and define $\sigma:W\rightarrow\bbn$ by 
\begin{align*}
\sigma(v_k)&=\begin{cases} 1\makebox{ if $k\neq i$}\\b \makebox{ if $k=i$}\end{cases}\\
\sigma(y_{i,r})&=a'\\
\sigma(y_{j,r})&=c'.
\end{align*}
It is straightforward to show that $(W,\sigma)$ is a weighted $r$-path vertex cover of $H_\lambda$.
Lemma~\ref{prop130721} provides a minimal weighted
$r$-path  vertex cover $(W',\sigma')$ of $G_\omega$ such that $(W',\sigma')\leq(W,\sigma)$.

We claim that $W'=W$. 
(This then yields the promised contradiction, completing the proof of this implication.)
To this end, first note that we have $W'\subseteq W$, by assumption.
So, we need to show that $W'\supseteq W$.
We 
cannot remove the vertex $y_{j,r}$ from $W$,
since that would leave the $r$-path $v_jy_{j,1}\dots y_{j,r}$ uncovered.
Thus, we have $y_{j,r}\in W'$.
Similarly, for $k\neq i,j$ the vertex $v_k$ cannot be removed,
so $v_k\in W'$.
If we remove the vertex $v_i$, the $r$-path $v_jv_iv_{i,1}\dots v_{i,r-1}$ is not covered,
so $v_i\in W'$.
Since $\sigma(v_i)=b>a$, the vertex $v_i$ does not cover the $r$-path $v_iy_{i,1}\dots y_{i,r}$.
It follows that the vertex $y_{i,r}$ cannot be removed.
Thus, we have
$y_{i,r}\in W'$, and it follows that $W'=W$, as claimed.

\eqref{prop130725biii}$\implies$\eqref{prop130725bi} Assuming condition~\eqref{prop130725biii}, Proposition~\ref{lem130726} 
implies that $I_{r,\max}(H_\lambda)$ is Cohen-Macaulay,
so Lemma~\ref{lem130726a}\eqref{lem130726a3}  implies that $I_{r,\max}(G_\omega)$ is as well.
\end{proof}

The next result contains Theorem~\ref{intthm130831a} from the introduction.

\begin{thm}\label{thm130831a}
Assume that $G_{\omega}$ is a weighted tree. Then the following conditions are equivalent:
\begin{enumerate}[\rm (i)]
\item\label{thm130831a1} $I_{r,\max}(G_\omega)$ is Cohen-Macaulay;
\item\label{thm130831a2} $I_{r,\max}(G_\omega)$ is m-unmixed; and
\item\label{thm130831a3} 
there is a weighted tree $\Gamma_\mu$ and an $r$-path suspension $H_\lambda$ of $\Gamma_\mu$
such that $H_\lambda$ is obtained by pruning a sequence of $r$-pathless leaves from $G_\omega$ and
for all $v_iv_j\in E(\Gamma_\mu)$ we have $\omega(v_iv_j)\leq\min\{\omega(v_iy_{i,1}),\omega(v_jy_{j,1})\}$.
\end{enumerate}
When  $G_{\omega}$ satisfies the above equivalent conditions,
the graph $H$ can be constructed by pruning $r$-pathless leaves from $G$ until no more $r$-pathless leaves remain.
\end{thm}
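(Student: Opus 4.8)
The plan is to prove Theorem~\ref{thm130831a} by combining the (more general) Proposition~\ref{prop130725b} with the one special feature of trees, namely that pruning $r$-pathless leaves until none remain produces a graph that is automatically an $r$-path suspension. First I would dispose of the degenerate case: if $G$ has no $r$-paths at all (e.g., if $n\leq r$ for a tree), then $I_{r,\max}(G_\omega)=(0)$, so $S/I_{r,\max}(G_\omega)=S$ is Cohen-Macaulay and m-unmixed (its only minimal vertex cover is $\emptyset$), and the empty graph $\Gamma_\mu=H_\lambda$ obtained by pruning everything satisfies~\eqref{thm130831a3} vacuously. So assume $G$ has at least one $r$-path.

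The core of the argument is the following claim: if $G$ is a tree with at least one $r$-path, and we prune $r$-pathless leaves repeatedly until we reach a weighted tree $H_\lambda$ with no $r$-pathless leaves, then $H_\lambda$ is an $r$-path suspension of a weighted tree $\Gamma_\mu$. To see this, note first that $H$ is again a tree (pruning a leaf from a tree yields a tree), and it still has an $r$-path by Lemma~\ref{lem130726x}\eqref{lem130726x1}. Every leaf $v$ of $H$ lies on some $r$-path (else it would be $r$-pathless), and in a tree there is a unique path from $v$ of any given length; following it inward, the first $r$ edges give an $r$-whisker $v\,y_1\cdots$ wait — I must orient it the other way: the $r$-whisker is $w\,y_{1}\cdots y_{r}$ where $y_r$ is the leaf. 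The precise statement to verify is that $H$ has a collection of pairwise vertex-disjoint $r$-whiskers whose vertex sets exhaust $V(H)$, with $\Gamma$ the induced subgraph on the remaining ``base'' vertices; this is exactly the characterization in Remark~\ref{disc130831a}. One argues: take any leaf $y_{i,r}$ of $H$; the unique path of length $r$ ending at it has the form $v_i\,y_{i,1}\cdots y_{i,r}$ with $\deg(y_{i,k})=2$ for $1\le k\le r-1$ (if some internal $y_{i,k}$ had degree $\ge 3$, a subtree hanging off it would contain a leaf not on any $r$-path, contradicting that no $r$-pathless leaves remain — one needs to check carefully that such a hanging leaf is indeed $r$-pathless, using that the tree has bounded branches near the end). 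Removing this $r$-whisker leaves a smaller tree (or forest) and one inducts. The vertices $v_i$ that serve as whisker-bases form $\Gamma$, and $\Gamma_\mu$ is the induced weighted subgraph.

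Once the claim is established, the theorem follows quickly. For \eqref{thm130831a1}$\Rightarrow$\eqref{thm130831a2}: always true. For \eqref{thm130831a2}$\Rightarrow$\eqref{thm130831a3}: given the $H_\lambda$ and $\Gamma_\mu$ produced by the claim (which exist and satisfy the pruning and suspension hypotheses), Proposition~\ref{prop130725b}, applied with this $H_\lambda$ and $\Gamma_\mu$, shows that m-unmixedness of $I_{r,\max}(G_\omega)$ forces condition~\eqref{prop130725biii}, which is precisely the weight inequality in~\eqref{thm130831a3}. For \eqref{thm130831a3}$\Rightarrow$\eqref{thm130831a1}: given \emph{any} $\Gamma_\mu$, $H_\lambda$ as in~\eqref{thm130831a3}, Proposition~\ref{prop130725b}\eqref{prop130725biii}$\Rightarrow$\eqref{prop130725bi} gives Cohen-Macaulayness directly. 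The final sentence of the theorem (``$H$ can be constructed by pruning $r$-pathless leaves until none remain'') is then exactly the content of the claim, together with the observation that the resulting $H$ does not depend on the order of pruning — which follows because $r$-pathlessness of a leaf is an intrinsic property preserved under pruning other $r$-pathless leaves (Lemma~\ref{lem130726x}\eqref{lem130726x1} shows the $r$-path set is unchanged), so the terminal graph is the induced subgraph on the union of all vertices lying on some $r$-path of $G$.

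I expect the main obstacle to be the combinatorial claim that a tree with no $r$-pathless leaves decomposes as an $r$-path suspension — specifically, verifying that the internal whisker vertices $y_{i,1},\dots,y_{i,r-1}$ all have degree exactly $2$ and that the whiskers are disjoint and cover $V(H)$. The subtlety is ruling out ``short branches'': one must show that if every leaf of the tree lies on an $r$-path, then near each leaf the tree really is a bare path of length $r$ with no side-branches before reaching a vertex of $\Gamma$, and that two such whiskers cannot overlap. This requires a careful induction on the size of the tree (or on the number of leaves), peeling off one whisker at a time and checking that the ``no $r$-pathless leaf'' property is inherited by the smaller tree after removing a whisker. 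The rest of the proof is bookkeeping with results already in hand.
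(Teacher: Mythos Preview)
Your overall architecture (prune, then apply Proposition~\ref{prop130725b}) matches the paper, but your ``core claim'' is false, and this is a genuine gap. It is \emph{not} true that a tree with no $r$-pathless leaves must be an $r$-path suspension. For $r=2$, take the bare path $v_1v_2v_3v_4v_5$: both leaves lie on $2$-paths, so nothing can be pruned, yet $|V(H)|=5$ is not a multiple of $r+1=3$, so $H$ cannot be an $r$-path suspension in the sense of Remark~\ref{disc130831a}. (Even simpler: a path on $3$ vertices with $r=1$.) Your proposed induction, peeling whiskers off leaves, breaks exactly here: the two length-$2$ whiskers growing in from $v_1$ and $v_5$ both want $v_3$ as their base vertex, so they overlap, and you cannot cover $V(H)$ by disjoint whiskers. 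The ``no short branches'' issue you flagged is not the real obstruction; overlapping whiskers is.

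The paper's proof of \eqref{thm130831a2}$\Rightarrow$\eqref{thm130831a3} does not attempt your combinatorial claim. Instead it uses the m-unmixedness hypothesis in an essential way: after pruning to $H_\lambda$, Lemma~\ref{lem130726a}\eqref{lem130726a2} transfers m-unmixedness to $I_{r,\max}(H_\lambda)$, then Lemma~\ref{lem130725} passes m-unmixedness down to the \emph{unweighted} ideal $I_r(H)$, and only then does an external result (\cite[Theorem~3.8 and Remark~3.9]{morey:dcmppi}) supply the conclusion that $H$ is an $r$-path suspension of a tree. In the $5$-path example above, $I_2(H)$ is mixed (it has minimal $2$-path vertex covers $\{v_3\}$ and $\{v_2,v_4\}$), so this route correctly excludes it. Once $H$ is known to be a suspension, Proposition~\ref{prop130725b} finishes as you described. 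So to repair your argument, replace the combinatorial claim by the chain Lemma~\ref{lem130726a}\eqref{lem130726a2} $\to$ Lemma~\ref{lem130725} $\to$ \cite{morey:dcmppi}.
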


\begin{proof}
The implications $\eqref{thm130831a3}\implies\eqref{thm130831a1}\implies\eqref{thm130831a2}$ are from
Proposition~\ref{prop130725b}.
For the implication $\eqref{thm130831a2}\implies\eqref{thm130831a3}$, assume that $I_{r,\max}(G_\omega)$ is m-unmixed.
Since $G$ is finite, prune a sequence of $r$-pathless leaves from $G_{\omega}$ to obtain a weighted subgraph $H_{\lambda}$
that has no $r$-pathless leaves.
Lemma~\ref{lem130726a}\eqref{lem130726a2} implies that $I_{r,\max}(H_{\lambda})$ is m-unmixed,
so Lemma~\ref{lem130725} implies that $I_{r}(H)$ is m-unmixed.
Thus, $H$ is an  $r$-path suspension of a  tree $\Gamma$ by~\cite[Theorem 3.8 and Remark 3.9]{morey:dcmppi}.
Finally, Proposition~\ref{prop130725b} implies that 
$\omega(v_iv_j)\leq\min\{\omega(v_iy_{i,1}),\omega(v_jy_{j,1})\}$ for all $v_iv_j\in E(\Gamma_\mu)$.
\end{proof}

\begin{ex}\label{ex130831e}
Consider the weighted graph $G_{\omega}$ 
in Example~\ref{ex130827c}.
Then $I_{r,\max}(G_{\omega})$ is Cohen-Macaulay if and only if $r\neq 2,3$, as follows.
Example~\ref{ex130831d} deals with the case $r=1$.

For $r>5$, the ideal $I_{r,\max}(G_{\omega})$ is trivially Cohen-Macaulay since $G$ has no $r$-paths.
(One can also deduce this from Lemma~\ref{lem130726a} since every leaf is $r$-pathless.)

This graph has a single 4-path, so $S/I_{4,\max}(G_{\omega})$ is a hypersurface, hence Cohen-Macaulay.
One can also deduce this from Theorem~\ref{thm130831a} by pruning the 4-pathless leaf $v_6$
to obtain the weighted 4-path $H_{\lambda}$ in Example~\ref{ex130804}. 
Since $H_{\lambda}$ is a 4-path suspension of the trivial
graph $v_1$, the desired conclusion follows from Theorem~\ref{thm130831a}.

For $r=2,3$, the ideal $I_{r,\max}(G_{\omega})$ is not Cohen-Macaulay by Theorem~\ref{thm130831a}. To see this, observe that
$G$ does not have any $r$-pathless leaves and is not an $r$-suspension for $r=2,3$.
\end{ex}

\begin{ex}\label{ex130901a}
Arguing as in Example~\ref{ex130831e}, we have the following for the weighted graphs $G'_{\omega'}$ and $G''_{\omega''}$
of Example~\ref{ex130831b}.
The ideal $I_{r,\max}(G'_{\omega'})$ is Cohen-Macaulay if and only if $r\geq 6$, and $I_{r,\max}(G''_{\omega''})$ is Cohen-Macaulay
 if and only if $r\neq 1,3,4,5,6,7$.
\end{ex}

\section{Cohen-Macaulay Weighted Complete Graphs when $r=2$}
\label{sec130827c}

\begin{assumption*}
Throughout this section, $K^n_{\omega}$ is a weighted $n$-clique, and $A$ is a field.
\end{assumption*}

In this section, we prove Theorem~\ref{inthm130728} from the introduction characterizing Cohen-Macaulayness of $n$-cliques in the
context of weighted path ideals for the function $f=\max$ with $r=2$.
We begin with two results about arbitrary $f$ and $r$.
Note that the assumption $r< n$ causes no loss of information since, when $r\geq n$, we have $I_{r,f}(G_{\omega})=0$.

\begin{lem}\label{lem130728a}
If $(W,\sigma)$ is an $f$-weighted $r$-path vertex cover for $K^n_\omega$ where $r< n$, then $|W|\geq n-r.$
\end{lem}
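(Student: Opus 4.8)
The plan is to reduce immediately to the unweighted situation and then exploit completeness of $K^n$. By the parenthetical remark in Definition~\ref{defn130619b}, the hypothesis that $(W,\sigma)$ is an $f$-weighted $r$-path vertex cover of $K^n_\omega$ implies that $W$ is an $r$-path vertex cover of the unweighted graph $K^n$ in the sense of Definition~\ref{defn130617d}; in particular, the function $\sigma$ and the choice of $f$ are irrelevant, and it suffices to bound $|W|$ for an arbitrary $r$-path vertex cover $W$ of $K^n$.

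Next I would argue by contradiction. Suppose $|W| < n-r$, so that $|V \ssm W| \geq r+1$. Choose $r+1$ distinct vertices $v_{i_1},\dots,v_{i_{r+1}}$ lying in $V \ssm W$; this is possible because $r < n$ guarantees $r+1 \leq n$ (which is also why $K^n$ genuinely has $r$-paths needing to be covered). Since $K^n$ is complete, each consecutive pair $v_{i_j}v_{i_{j+1}}$ is an edge, so $v_{i_1}\dots v_{i_{r+1}}$ is an $r$-path in $K^n$. None of its vertices lies in $W$, contradicting the fact that $W$ is an $r$-path vertex cover. Therefore $|W| \geq n-r$, as claimed.

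The argument is purely combinatorial and uses nothing beyond completeness of $K^n$, so there is no real obstacle here; the only point that needs a moment's care is invoking the parenthetical in Definition~\ref{defn130619b} at the outset to strip away the weight data and the dependence on $f$.
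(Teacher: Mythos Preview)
Your proof is correct and takes essentially the same approach as the paper: both argue by contradiction, choosing $r+1$ vertices outside $W$ and observing that they form an uncovered $r$-path in $K^n$. Your version is slightly more explicit in first reducing to the unweighted setting via the parenthetical in Definition~\ref{defn130619b}, but the underlying argument is identical.
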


\begin{proof}
Suppose that $|W|<n-r$ and assume  that  $v_{i_1},\dots,v_{i_{r+1}}\notin W$.  Then the path $v_{i_1}\dots v_{i_{r+1}}$ in $K^n_\omega$ is not covered by $(W,\sigma)$.
\end{proof}

\begin{lem}\label{lem130728z}
Assume that $r< n$, and consider an arbitrary subset $W\subseteq V$ with 
$|W|=n-r$. Then there is a function $\sigma''\colon W\to \bbn$
such that $(W,\sigma'')$ is a minimal $f$-weighted $r$-path vertex cover for $K^n_\omega$.
\end{lem}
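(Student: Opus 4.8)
The statement asks, given any subset $W\subseteq V$ with $|W|=n-r$, to produce a weight function $\sigma''$ making $(W,\sigma'')$ a \emph{minimal} $f$-weighted $r$-path vertex cover of $K^n_\omega$. The natural approach is two-stage: first show $W$ (with \emph{some} weight function) is an $f$-weighted $r$-path vertex cover at all, then invoke Lemma~\ref{prop130721} to push down to a minimal one sitting below it, and finally check that the minimal cover obtained still has underlying vertex set exactly $W$ (not a proper subset), using Lemma~\ref{lem130728a}.

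\textbf{Step 1: $W$ supports an $f$-weighted $r$-path vertex cover.} By Example~\ref{rmk130827a}, it suffices to check that $W$ is an $r$-path vertex cover of the unweighted $K^n$: then the constant function $\sigma(v)=1$ yields an $f$-weighted $r$-path vertex cover $(W,\sigma)$. An $r$-path in $K^n$ uses $r+1$ distinct vertices; since $|V\ssm W| = n-(n-r) = r < r+1$, every $(r+1)$-subset of $V$ must meet $W$, so every $r$-path is covered. Hence $W$ is an $r$-path vertex cover of $K^n$, and $(W,\sigma)$ with $\sigma\equiv 1$ is an $f$-weighted $r$-path vertex cover of $K^n_\omega$.

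\textbf{Step 2: descend to a minimal cover below $(W,\sigma)$.} Apply Lemma~\ref{prop130721} to $(W,\sigma)$: there is a minimal $f$-weighted $r$-path vertex cover $(W'',\sigma'')$ of $K^n_\omega$ with $(W'',\sigma'')\leq (W,\sigma)$. By Definition~\ref{defn130619c} this gives $W''\subseteq W$, so $|W''|\leq n-r$. On the other hand, Lemma~\ref{lem130728a} (applicable since $r<n$) forces $|W''|\geq n-r$. Therefore $|W''|=n-r=|W|$, and combined with $W''\subseteq W$ we get $W''=W$. Thus $(W,\sigma'')$ is the desired minimal $f$-weighted $r$-path vertex cover for $K^n_\omega$.

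\textbf{Main obstacle.} There is essentially no obstacle here: the content is entirely in the counting argument of Step~1 (every $(r+1)$-set meets $W$ because $|V\ssm W|=r$) and in chaining Lemmas~\ref{prop130721} and~\ref{lem130728a} to pin down the cardinality of the descended cover. The only point requiring a moment's care is that Lemma~\ref{prop130721} is stated as producing \emph{some} minimal cover $\leq (W,\sigma)$ — one must then use the cardinality bound of Lemma~\ref{lem130728a} to rule out the vertex set shrinking strictly, which is exactly what Step~2 does.
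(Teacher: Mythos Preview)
Your proof is correct and follows essentially the same approach as the paper: establish that $W$ with constant weight $1$ is an $f$-weighted $r$-path vertex cover, descend via Lemma~\ref{prop130721}, and use the cardinality bound of Lemma~\ref{lem130728a} to conclude $W''=W$. The only cosmetic difference is that the paper phrases the Step~1 counting as ``inclusion-exclusion'' while you give the direct pigeonhole argument that $|V\ssm W|=r<r+1$; your phrasing is if anything cleaner.
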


\begin{proof}
Using the inclusion-exclusion principal, it is straightforward to show that $W$ is an $r$-path vertex cover of $K^n$.
The trivial weight $\sigma\colon W\to\bbn$ with $\sigma(v)=1$ for all $v\in W$ makes $(W,\sigma)$ into an $f$-weighted $r$-path vertex cover of $K^n_\omega$.
Lemma~\ref{prop130721} yields a minimal $f$-weighted
$r$-path  vertex cover $(W'',\sigma'')$ of $G_\omega$ such that $(W'',\sigma'')\leq(W,\sigma)$.
Lemma~\ref{lem130728a} shows that $|W''|\geq n-r=|W|$. Since $W''\subseteq W$, we must have $W=W''$, as desired.
\end{proof}

For the remainder of this section, we focus on the case $f=\max$.

\begin{prop}\label{prop130728a}
If $r\leq n$, then $\dim(S/I_{r,\max}(K^n_\omega))=r$.
\end{prop}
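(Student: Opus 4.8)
The plan is to compute $\dim(S/I_{r,\max}(K^n_\omega))$ by using the combinatorial description of its irredundant decomposition furnished by Theorem~\ref{prop130725}\eqref{prop130725ii}. Over a field, each component $P_{(W,\sigma)}$ in the minimal decomposition is primary with radical $(X_i\mid v_i\in W)S$, so $\dim(S/P_{(W,\sigma)})=n-|W|$ and hence
$$\dim(S/I_{r,\max}(K^n_\omega))=\max_{(W,\sigma)\text{ min}}\bigl(n-|W|\bigr)=n-\min_{(W,\sigma)\text{ min}}|W|.$$
Thus the statement $\dim(S/I_{r,\max}(K^n_\omega))=r$ is equivalent to showing that the smallest cardinality of a minimal weighted $r$-path vertex cover of $K^n_\omega$ is exactly $n-r$.

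First I would dispose of the degenerate case $r=n$, where $K^n$ has no $r$-paths (an $r$-path needs $r+1=n+1$ distinct vertices), so $I_{r,\max}(K^n_\omega)=0$ and $\dim(S/I_{r,\max}(K^n_\omega))=n=r$; so assume $r<n$. Next, the lower bound $\min|W|\geq n-r$ over \emph{all} weighted $r$-path vertex covers (in particular the minimal ones) is exactly Lemma~\ref{lem130728a}. For the matching upper bound, Lemma~\ref{lem130728z} (applied with $f=\max$) says that for any subset $W\subseteq V$ with $|W|=n-r$ there is a weight function $\sigma''$ making $(W,\sigma'')$ a \emph{minimal} weighted $r$-path vertex cover of $K^n_\omega$; such a minimal cover of cardinality exactly $n-r$ therefore exists, so $\min|W|=n-r$. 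Combining, $\dim(S/I_{r,\max}(K^n_\omega))=n-(n-r)=r$.

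There is essentially no obstacle here: the proposition is a clean corollary of Theorem~\ref{prop130725} together with Lemmas~\ref{lem130728a} and~\ref{lem130728z}, plus the standard fact (recorded in Remark~\ref{rmk130827d}) that over a field $P_{(W,\sigma)}$ is primary with radical generated by the variables indexed by $W$. The only point requiring a word of care is that we take the dimension of the quotient from the \emph{irredundant} primary decomposition, so that the minimum is genuinely attained by a component actually appearing in the decomposition — this is guaranteed because Lemma~\ref{lem130728z} produces a bona fide minimal cover, which by Theorem~\ref{prop130725}\eqref{prop130725ii} contributes a component to the irredundant decomposition. Hence $\dim(S/I_{r,\max}(K^n_\omega))=r$, as claimed.
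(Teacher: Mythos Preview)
Your proposal is correct and follows essentially the same route as the paper: handle the degenerate case $r=n$ separately, then for $r<n$ invoke Lemma~\ref{lem130728a} for the lower bound $|W|\geq n-r$ and Lemma~\ref{lem130728z} for the existence of a minimal cover of size exactly $n-r$, concluding via Theorem~\ref{prop130725}\eqref{prop130725ii}. Your write-up is slightly more explicit than the paper's in spelling out the formula $\dim(S/I_{r,\max}(K^n_\omega))=n-\min_{(W,\sigma)\text{ min}}|W|$ via Remark~\ref{rmk130827d}, but the argument is otherwise identical.
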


\begin{proof}
If $r=n$, then $I_{r,\max}(K^n_\omega)=0$ and therefore $\dim(S/I_{r,\max}(K^n_\omega))=\dim(S)=n=r$, as claimed. Assume 
for the rest of the proof that $r<n$.  Lemma~\ref{lem130728a} implies that 
for every weighted $r$-path vertex cover $(W,\sigma)$ we have $|W|\geq n-r$.  Furthermore, Lemma~\ref{lem130728z} implies that 
there is a minimal weighted $r$-path vertex cover $(W,\sigma)$ with $|W|= n-r$.
Thus, the desired conclusion follows from  Theorem~\ref{prop130725}\eqref{prop130725ii}.
\end{proof}

For the rest of the section, we focus on the case $r=2$.

The next result characterizes the weighted $3$-cliques $K^3_\omega$ such that $I_{2,\max}(K^3_{\omega})$ is Cohen-Macaulay.
Note that these cliques are key for the characterization of Cohen-Macaulayness of larger $n$-cliques in Theorem~\ref{inthm130728}.
Also, smaller $n$-cliques are very small trees that always give Cohen-Macaulay ideals;
argue as in Example~\ref{ex130831e}.

\begin{prop}\label{prop130729}
Consider a weighted 3-clique $K^3_\omega$, which we assume
by symmetry to be of the following form
$$\xymatrix{
u\ar@{-}[r]^{a}\ar@{-}[d]_{c}&v\ar@{-}[dl]^{b}\\
w
}$$
with weights $a,b$, and $c$ such that $a\leq b\leq c$.
Then the following conditions are equivalent:
\begin{enumerate}[\rm(i)]
\item \label{prop130729a} The ideal $I_{2,\max}(K^3_\omega)$ is Cohen-Macaulay;
\item \label{prop130729b} The ideal $I_{2,\max}(K^3_\omega)$ is unmixed; and
\item \label{prop130729c} We have $a=b$, that is,  $a=b\leq c$.
\end{enumerate}
\end{prop}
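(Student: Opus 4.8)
The plan is to make everything explicit in terms of $I:=I_{2,\max}(K^3_\omega)$ and its weighted $2$-path vertex covers. First I would list the three $2$-paths of $K^3$ and, via Definition~\ref{defn130619f} with $f=\max$ and the normalization $a\le b\le c$, record the three monomial generators of $I$: the $2$-path through $u,v,w$ with midpoint $v$ gives $X_u^aX_v^bX_w^b$, the one with midpoint $w$ gives $X_u^cX_v^bX_w^c$, and the one with midpoint $u$ gives $X_u^cX_v^aX_w^c$. Since $a\le b$, the generator $X_u^cX_v^aX_w^c$ divides $X_u^cX_v^bX_w^c$, so the latter is redundant and
$$I=(X_u^aX_v^bX_w^b,\ X_u^cX_v^aX_w^c)S.$$
By Proposition~\ref{prop130728a} one has $\dim(S/I)=2$, and the implication \eqref{prop130729a}$\implies$\eqref{prop130729b} is automatic since Cohen--Macaulay quotients of the polynomial ring $S$ are unmixed. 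Thus it remains to prove \eqref{prop130729c}$\implies$\eqref{prop130729a} and \eqref{prop130729b}$\implies$\eqref{prop130729c}.

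For \eqref{prop130729c}$\implies$\eqref{prop130729a}, suppose $a=b$. Then $X_u^aX_v^bX_w^b=X_u^aX_v^aX_w^a$, and since $a\le c$ this monomial divides $X_u^cX_v^aX_w^c$; hence $I=(X_u^aX_v^aX_w^a)S$ is principal, generated by a nonzero element of the domain $S$, and so by a nonzerodivisor. Therefore $S/I$ is a hypersurface ring, which is Cohen--Macaulay.

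For \eqref{prop130729b}$\implies$\eqref{prop130729c} I would argue the contrapositive. Assume $a\neq b$; since $a\le b\le c$ this forces $a<b$ and $a<c$. By Remarks~\ref{rmk130806} and~\ref{rmk130827d} together with Theorem~\ref{prop130725}, over the field $A$ the ideal $I$ is unmixed if and only if all minimal weighted $2$-path vertex covers of $K^3_\omega$ have the same cardinality, so it suffices to exhibit one of cardinality $1$ and one of cardinality $2$. For cardinality $1$: one checks directly that $(\{u\},\sigma)$ with $\sigma(u)=a$ covers each of the three $2$-paths — using the bound $\omega(uv)=a$ on the path where $u$ lies on the weight-$a$ edge, and the inequality $a<c$ for the bounds $c$ arising on the other two paths — and that it is minimal. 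For cardinality $2$: the pair $(\{u,v\},\tau)$ with $\tau(u)=c$ and $\tau(v)=b$ is a weighted $2$-path vertex cover, since on each $2$-path it is covered by $u$ (where the relevant bound for $u$ is $\omega(uw)=c$ or $\max\{\omega(vu),\omega(uw)\}=c$) or by $v$ (where the relevant bound for $v$ is $\max\{\omega(uv),\omega(vw)\}=b$ or $\omega(wv)=b$). Now Lemma~\ref{prop130721} yields a minimal weighted $2$-path vertex cover $(W'',\sigma'')$ with $(W'',\sigma'')\le(\{u,v\},\tau)$, so $W''\subseteq\{u,v\}$ and $\sigma''$ dominates $\tau$ on $W''$; but $\emptyset$ is not a cover, a weighted $2$-path vertex cover supported only at $u$ is forced to carry weight $\le a<c=\tau(u)$, and one supported only at $v$ is forced to carry weight $\le a<b=\tau(v)$, so necessarily $W''=\{u,v\}$. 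This exhibits minimal weighted $2$-path vertex covers of $K^3_\omega$ of cardinalities $1$ and $2$, whence $I$ is not unmixed.

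I expect the last implication to be the main obstacle: one has to guess the correct cardinality-$2$ cover $(\{u,v\},(c,b))$ and then show it cannot be shrunk to a singleton. What keeps the argument short is the combination of Lemma~\ref{prop130721} — so that no maximality-of-weights computation is needed — with the elementary fact that any singleton weighted $2$-path vertex cover of $K^3_\omega$ supported at $u$ or at $v$ must carry weight at most $a$, which is precisely the obstruction that vanishes once $a=b$.
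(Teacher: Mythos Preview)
Your proof is correct and follows essentially the same approach as the paper: compute the two-generator form of $I$, observe that \eqref{prop130729c}$\implies$\eqref{prop130729a} because $I$ becomes principal, and for \eqref{prop130729b}$\implies$\eqref{prop130729c} show mixedness when $a<b$ via the minimal weighted $2$-path vertex covers. The only tactical difference is that the paper writes down the full irredundant m-irreducible decomposition (splitting into the cases $a<b=c$ and $a<b<c$) and reads mixedness off from it, whereas you more economically exhibit just two minimal covers of different sizes --- $\{u^a\}$ and one supported on $\{u,v\}$ --- and invoke Lemma~\ref{prop130721} to certify minimality of the second; this avoids the case split and the unstated verification of irredundancy, at the cost of not recording the complete decomposition.
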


\begin{proof}
First, we note that
\begin{align*}
I_{2,\max}(K^3_\omega)
&=(X^aY^bZ^b,X^cY^bZ^c,X^cY^aZ^c)S
=(X^aY^bZ^b,X^cY^aZ^c)S.
\end{align*}

The implication
$\eqref{prop130729a}\implies\eqref{prop130729b}$ is standard.

$\eqref{prop130729b}\implies\eqref{prop130729c}$
We argue by contrapositive. Assume that 
$a<b$.
If $a<b=c$, then it is straightforward to show that the weighted $2$-path ideal decomposes irredundantly as follows.
\begin{align*}
I_{2,\max}(K^3_\omega)
&=(X^aY^bZ^b,X^bY^aZ^b)S
=(X^a)S\bigcap(Y^a)S\bigcap(Z^b)S\bigcap(X^b,Y^b)
\end{align*}
In particular, this ideal is mixed.  When $a<b<c$, the weighted $2$-path ideal is also mixed because of the following irredundant decomposition.
\begin{align*}
I_{2,\max}(K^3_\omega)
&=(X^aY^bZ^b,X^cY^aZ^c)S\\
&=(X^a)S\bigcap(Y^a)S\bigcap(Z^b)S\bigcap(X^c,Y^b)S\bigcap(Y^b,Z^c)S
\end{align*}

$\eqref{prop130729c}\implies\eqref{prop130729a}$
If $a=b$, then we have
\begin{equation}\label{eq140801a}
I_{2,\max}(K^3_\omega)=(X^aY^aZ^a)S
\end{equation}
which is generated by a regular element and is therefore Cohen-Macaulay. 
\end{proof}

\begin{disc}\label{disc140802z}
The first display in the
proof of Proposition~\ref{prop130729} shows that the generating sequence used to define $I_{r,f}(G_\omega)$ can be redundant, i.e., non-minimal.
\end{disc}

Our next result uses the following information about colon ideals.

\begin{disc}\label{disc140802a}
Let $I$ be a monomial ideal in $S$, that is an ideal of $S$ generated by a list $g_1,\ldots,g_t$ of monomials in the variables $X_1,\ldots,X_n$.
Given another monomial $h\in S$, it is straightforward to show that
the colon ideal $(I:_Sh)$ is generated by the following list of monomials:
$g_1/\gcd(g_1,h),\ldots,g_t/\gcd(g_t,h)$.
\end{disc}

The next result contains one implication of Theorem~\ref{inthm130728} from the introduction.
Note that the 2-path Cohen-Macaulay weighted 3-cliques are characterized in Proposition~\ref{prop130729}.

\begin{thm}\label{thm130728}
Let $n\geq 3$.
Assume that  every induced weighted sub-3-clique $K^3_{\omega'}$ of $K^n_\omega$ has $I_{2,\max}(K^{3}_{\omega'})$ Cohen-Macaulay.
Then  $I_{2,\max}(K^n_\omega)$ is also Cohen-Macaulay.
\end{thm}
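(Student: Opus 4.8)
The plan is to induct on $n\ge 3$. The case $n=3$ is Proposition~\ref{prop130729}: there the hypothesis reduces to ``$I_{2,\max}(K^3_\omega)$ is Cohen--Macaulay'' because a $3$-clique is its own only induced sub-$3$-clique, so there is nothing to prove. Assume $n\ge 4$, set $I=I_{2,\max}(K^n_\omega)$, and pick any vertex $v_n$. By Proposition~\ref{prop130728a}, $\dim(S/I)=2$, so it suffices to prove $\depth(S/I)\ge 2$. The engine is the short exact sequence of $S$-modules
$$0\longrightarrow S/(I:_SX_n)\xrightarrow{\ \cdot X_n\ }S/I\longrightarrow S/(I+X_nS)\longrightarrow 0 .$$
For the cokernel, Lemma~\ref{lem130728} gives $S/(I+X_nS)\cong S'/I_{2,\max}(K^{n-1}_{\omega'})$, where $K^{n-1}_{\omega'}$ is the sub-clique induced on $V\ssm\{v_n\}$; since every induced sub-$3$-clique of $K^{n-1}_{\omega'}$ is one of $K^n_\omega$, the inductive hypothesis makes this Cohen--Macaulay of dimension $2$, hence of depth $2$ over $S$. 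By the depth inequality $\depth B\ge\min\{\depth A,\depth C\}$ for a short exact sequence $0\to A\to B\to C\to 0$, it remains to show $\depth(S/(I:_SX_n))\ge 2$.

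To handle this I would first record the structural consequence of the hypothesis at $v_n$: applying Proposition~\ref{prop130729} to the induced sub-$3$-clique on $\{v_i,v_j,v_n\}$ forces, for $i,j\ne n$, that $\omega(v_iv_j)=\min\{\omega(v_iv_n),\omega(v_jv_n)\}$ if $\omega(v_iv_n)\ne\omega(v_jv_n)$, and $\omega(v_iv_j)\ge\omega(v_iv_n)=\omega(v_jv_n)$ if they are equal; in particular $\omega(v_iv_j)\ge m_{ij}:=\min\{\omega(v_iv_n),\omega(v_jv_n)\}$ always. Now compute $(I:_SX_n)$ with Remark~\ref{disc140802a}. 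Generators of $I$ from $2$-paths missing $v_n$ are fixed by the colon and are exactly the generators of $I_{2,\max}(K^{n-1}_{\omega'})S$. The $2$-paths through $v_n$ on a vertex set $\{v_i,v_j,v_n\}$ (one with $v_n$ internal, two with $v_n$ an endpoint) yield, after using the structural fact to discard multiples, the single monomial $X_n^{m_{ij}-1}(X_iX_j)^{m_{ij}}$ for each pair $\{i,j\}\subseteq\{1,\dots,n-1\}$. Hence
$$(I:_SX_n)=I_{2,\max}(K^{n-1}_{\omega'})S+\bigl(X_n^{m_{ij}-1}(X_iX_j)^{m_{ij}}\ :\ 1\le i<j\le n-1\bigr)S .$$
Choosing, via Lemma~\ref{lem130728z}, a minimal weighted $2$-path vertex cover $(W,\sigma)$ of $K^n_\omega$ with $v_n\notin W$ and $|W|=n-2$ shows $(I:_SX_n)\subseteq P_{(W,\sigma)}$, so $\dim(S/(I:_SX_n))=2$.

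The crux, and the step I expect to cost the most, is proving $S/(I:_SX_n)$ is Cohen--Macaulay of dimension $2$. I would do this by polarizing $(I:_SX_n)$, as in Proposition~\ref{lem130726} and~\cite{paulsen:eiwg}, and showing the polarized squarefree monomial ideal is the Stanley--Reisner ideal of a shellable simplicial complex, with a shelling that on the blocks of variables coming from $v_n$ respects the weighted edge ideal $I_1(K^{n-1}_\mu)$ with $\mu(v_iv_j)=m_{ij}$. The point of that edge ideal is that the same bookkeeping gives $(I:_SX_n^{\infty})=I_1(K^{n-1}_\mu)S$, and edge ideals of cliques are Cohen--Macaulay (unmixed of dimension $1$), so $S/(I:_SX_n^{\infty})=(S'/I_1(K^{n-1}_\mu))[X_n]$ is a Cohen--Macaulay object of dimension $2$ to compare against---e.g.\ by running the filtration $I\subseteq(I:_SX_n)\subseteq(I:_SX_n^2)\subseteq\cdots\subseteq(I:_SX_n^{\infty})$ and checking each successive quotient has depth $\ge 2$. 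Once $\depth(S/(I:_SX_n))\ge 2$ is established, the depth inequality applied to the displayed short exact sequence gives $\depth(S/I)\ge 2=\dim(S/I)$, so $I_{2,\max}(K^n_\omega)$ is Cohen--Macaulay and the induction closes. The two genuine obstacles are the monomial bookkeeping behind the formula for $(I:_SX_n)$ and the shellability (or comparable) argument for its polarization.
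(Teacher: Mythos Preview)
Your overall architecture---induction on $n$, a short exact sequence from multiplication by a variable power, and Lemma~\ref{lem130728} for the cokernel---matches the paper. The genuine gap is precisely where you flag it: you do not prove $\depth(S/(I:_SX_n))\ge 2$. The shellability route is bare assertion, and the filtration route, while plausible, is not carried out; verifying that each quotient $(I:_SX_n^{k+1})/(I:_SX_n^k)$ has depth $\ge 2$ is itself a nontrivial computation depending on the combinatorics of the $m_{ij}$. If instead you iterate the multiplication-by-$X_n$ sequence, the intermediate cokernels $S/((I:_SX_n^k)+X_nS)$ need not be Cohen--Macaulay: for $n=4$ with $\omega(v_1v_4)=\omega(v_1v_2)=\omega(v_1v_3)=1$ and $\omega(v_2v_4)=\omega(v_3v_4)=\omega(v_2v_3)=2$ (all sub-$3$-cliques Cohen--Macaulay), one gets $(I:_SX_4)=(X_1X_2,X_1X_3,X_2^2X_3^2X_4)S$ and $S/((I:_SX_4)+X_4S)\cong A[X_1,X_2,X_3]/(X_1X_2,X_1X_3)$, which is mixed of depth~$1$, so the depth lemma yields only $\depth\ge 1$ there.

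The paper avoids the filtration entirely by making two sharper choices. First, it sets $a=\min_e\omega(e)$, picks $v_1$ on an edge of weight $a$, and multiplies by $X_1^a$ rather than by a single $X_n$; the cokernel $R/X_1^aR\cong R'[T]/(T^a)$ is Cohen--Macaulay by induction. Second, for the kernel $X_1^aR\cong S/(I:_SX_1^a)$ it splits into cases. In Case~1 (every edge at $v_1$ has weight $a$) one computes directly that $(I:_SX_1^a)=((X_iX_j)^a:1<i<j)S$, the constant-weight edge ideal of $K^{n-1}$ extended by $X_1$, which is Cohen--Macaulay of dimension~$2$. Case~2 is the idea your outline lacks: the paper builds a maximal vertex set $W\ni v_1$ whose internal edges all have weight $>a$, shows (using the sub-$3$-clique hypothesis) that every edge from $W$ to $V\ssm W$ has weight exactly $a$, defines a new weight $\lambda$ equal to $\omega$ inside $W$ and to $a$ elsewhere, and proves $(I:_SX_1^a)=(J:_SX_1^a)$ for $J=I_{2,\max}(K^n_\lambda)$. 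Since $K^n_\lambda$ falls under Case~1 at the vertex $v_2\notin W$, the ring $S/J$ is Cohen--Macaulay, and one more depth-lemma step gives $\depth_S(X_1^aR)=\depth_S(S/(J:_SX_1^a))=2$. This comparison-to-$\lambda$ device is what replaces your unexecuted filtration/shellability argument.
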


\begin{proof}
Set $I:=I_{2,\max}(K^n_\omega)$.
Note that our hypothesis on the induced weighted sub-3-cliques of $K^n_\omega$ imply that $I$ is generated by the following set of monomials.
$$
\{X_i^{a_{i,j,k}}X_j^{a_{i,j,k}}X_k^{a_{i,j,k}}\mid\text{$i<j<k$ and $a_{i,j,k}=\min(\omega(e_ie_j),\omega(e_ie_k),\omega(e_je_k)$}\}
$$
Indeed, this follows from Lemma~\ref{lem140801a} and the description of $I_{2,\max}(K^3_{\omega'})$ from 
equation~\eqref{eq140801a} in the proof of Proposition~\ref{prop130729}.
In particular, the generators of $I$ are determined by the induced weighted sub-3-cliques of $K^n_\omega$.

We proceed by induction on $n$.  
The base case $n=3$ is trivial.

For the inductive step, assume that $n\geq 4$ and the following: for every weighted $(n-1)$-clique $K^{n-1}_{\mu}$,
if every induced weighted sub-3-clique $K^3_{\mu'}$ of $K^{n-1}_\mu$ has $I_{2,\max}(K^{3}_{\mu'})$ Cohen-Macaulay, 
then $I_{2,\max}(K^{n-1}_\mu)$ is also Cohen-Macaulay.
Set $R:=S/I_{2,\max}(K^{n}_{\omega})$ and
$a:=\min\{\omega(v_iv_j) \mbox{ over all $i$ and $j$}\}$. Assume by symmetry that $\omega(v_1v_2)=a$.  
Let $K^{n-1}_{\omega'}$ denote the weighted sub-clique of $K^n_\omega$  induced by $V\ssm\{v_1\}$.  
Set $S'=A[X_2,\ldots,X_{n}]$.
Lemma~\ref{lem130728} implies that $R':=R/(X_1)R\cong S'/I_{2,\max}(K^{n-1}_{\omega'})$.  Since $K^{n-1}_{\omega'}$ has the same condition on the 
induced weighted sub-3-cliques, $R'$ is Cohen-Macaulay  by the inductive hypothesis.
Note that Proposition~\ref{prop130728a} says that $\dim(R')=2$.
We consider the following short exact sequence.
\begin{equation}\label{eq140801b}
0\rightarrow X_1^aR\rightarrow R\rightarrow R/X_1^aR\rightarrow 0
\end{equation}
Since $a$ is the smallest edge weight on $K^n_\omega$, we have
$R/X_1^aR\cong R'[T_1]/(T_1^a)$, which is Cohen-Macaulay of dimension 2.
As $\dim(R)=2$, in order to show that $R$ is Cohen-Macaulay, it suffices to show that $\depth(R)\geq 2$.
Applying the Depth Lemma  to the sequence~\eqref{eq140801b}, we see that
it suffices to show that $\depth_S(X_1^aR)=2$.

Case 1: Assume that $\omega(v_1v_i)=a$ for all $i=2,\dots,n$.  

Claim 1: $(I:_SX_1^a)=(X_i^aX_j^a\mid 1<i< j\leq n)S$.
For the containment $\supseteq$, let $1<i< j\leq n$.
Our assumptions on $a$ imply that the generator of $I$ corresponding to the sub-clique induced by $v_1,v_i,v_j$ is $X_1^aX_i^aX_j^a$.
It follows that the element $X_i^aX_j^a$ is in $(I:_SX_1^a)$, as desired.
For the reverse containment,
note that the generators for $I$ are of the form $X_p^\alpha X_q^\alpha X_r^\alpha$ such that $p<q<r$ and $\alpha\geq a$.  The corresponding generator of 
$(I:_SX_1^a)$ when $p=1$ is  $X_1^{\alpha-a}X_q^\alpha X_r^\alpha\in(X_q^aX_r^a)$.  When $p\neq1$ we have $X_p^\alpha X_q^\alpha X_r^\alpha\in(X_q^aX_r^a)$.  Therefore the claim holds.

Also, we have
$$X_1^aR\cong R/\ann_R(X_1^a)\cong S/(I:_SX_1^a)\cong(S'/I_{1,\max}(K^{n-1}_a))[X_1]$$
where the graph $K^{n-1}_a$ has constant weight $a$ on each edge; this is by Claim 1.  The proof of~\cite[Proposition 5.2]{paulsen:eiwg} shows that 
$S'/I_{1,\max}(K^{n-1}_a)$ is Cohen-Macaulay of dimension 1.  Therefore $X_1^aR\cong (S'/I_{1,\max}(K^{n-1}_a))[X_1]$ is Cohen-Macaulay of dimension 2.

Case 2: Assume that $\omega(v_1v_2)=a<\omega(v_1v_i)$ for some $i>2$.  
This assumption implies that there exists a subset $W\subseteq V$ such that $v_1,v_i\in W$ and for each $v_j,v_k\in W$ we have $\omega(v_jv_k)>a$.  By the finiteness of the graph $K^n$, there exists a maximal such set $W$.  Note that $|W|\geq 2$. 

Claim 2: for all $v_p\in V\ssm W$ and all $v_j\in W$, we have $\omega(v_jv_p)=a$.
Suppose by way of contradiction that $\omega(v_jv_p)>a$. 
Let $v_k\in W$ such that $v_k\neq v_j$. By assumption, we have $\omega(v_jv_k)>a$ and $\omega(v_jv_p)>a$.
Let $K^3_{\omega'}$ be the weighted sub-3-clique of $K^n_\omega$ induced by $v_j,v_k,v_p$.
By assumption, the ideal $I_{2,\max}(K^3_{\omega'})$ is Cohen-Macaulay, so Proposition~\ref{prop130729} implies that either
$\omega(v_kv_p)\geq\omega(v_jv_p)>a$ or $\omega(v_kv_p)\geq\omega(v_jv_k)>a$.
Since $v_k$ was chosen arbitrarily,  the set $W\cup\{v_p\}$ satisfies the condition for $W$, contradicting the maximality of $W$.  

Let $\lambda$ be a new weight on $K^n$ such that
$$\lambda(v_\alpha v_\beta)=\begin{cases} \omega(v_\alpha v_\beta) \mbox{ if } v_\alpha, v_\beta\in W\\ a \mbox{ if } v_\alpha\not\in W \mbox{ or } v_\beta\not\in W.\end{cases}$$
Observe that this implies for $v_j,v_k\in W$ and $v_p,v_q\not\in W$ we have
\begin{align*}
\lambda(v_jv_k)&=\omega(v_jv_k)\\
\lambda(v_jv_p)&=a=\omega(v_jv_p)\\
\lambda(v_pv_q)& \mbox{ may be different from } \omega(v_pv_q).
\end{align*}
Hence the graph $K^n_\lambda$ satisfies the induced weighted sub-3-clique assumption. 
(The four types of induced weighted sub-3-cliques are displayed next, with $v_j,v_k,v_l\in W$
and $v_p,v_q,v_r\notin W$.)
$$\xymatrix@C=12mm{
v_j\ar@{-}[r]^{\omega(v_jv_k)>a}\ar@{-}[d]_{\omega(v_jv_l)>a}&v_k\ar@{-}[dl]^{\omega(v_kv_l)>a}&&v_j\ar@{-}[r]^{\omega(v_jv_k)>a}\ar@{-}[d]_a&v_k\ar@{-}[dl]^{a}\\
v_l&&&v_p\\
v_j\ar@{-}[r]^a\ar@{-}[d]_a&v_p\ar@{-}[dl]^{a}&&v_p\ar@{-}[r]^a\ar@{-}[d]_a&v_q\ar@{-}[dl]^{a}\\
v_q&&&v_r
}$$

Since $\omega(v_1v_2)=a$, we have $v_2\not\in W$.  Thus $\lambda(v_2v_l)=a$ for all $l\neq 2$.  Hence the ideal 
$J:=I_{2,\max}(K^n_\lambda)$ is Cohen-Macaulay by Case 1.
Note that the condition $\omega(e)\geq\lambda(e)$ for each edge $e$ implies that $I\subseteq J$.

Claim 3: We have the equality $(I:_SX_1^a)=(J:_SX_1^a)$.  The containment $\subseteq$ follows from the fact that $I\subseteq J$.
For the reverse containment, recall that the generators for the ideals $I$ and $J$ are determined by the induced sub-3-cliques of $K^n$.
For the first three sub-3-cliques displayed above, the corresponding generators of $I$ and $J$ are the same.
Therefore, the generators in the colon ideals produced by these generators are the same; see Remark~\ref{disc140802a}.  
In the case of the fourth induced  sub-3-clique,  the 
associated generator for $J$ is $X_p^aX_q^aX_r^a$.
Since $p,q,r\neq 1$, the associated generator for $(J:_SX_1^a)$ is $X_p^aX_q^aX_r^a\in(X_p^aX_q^a)S\subseteq(I:_SX_1^a)$;
the last containment is explained as follows. The existence of  distinct elements $v_p,v_q,v_r\in V\ssm W$
provides a sub-3-clique induced by $v_1,v_p,v_q$, which is of the third type, with corresponding generator for the colon ideals being $X_p^aX_q^a$.
This establishes Claim~3.

Lastly, 
Case 1 shows that $\depth_S((X_1^a)S/J)=2$.
Claim 3 implies that 
$$(X_1^a)S/J\cong S/(J:_SX_1^a)=S/(I:_SX_1^a)\cong (X_1^a)S/I=X_1^aR.$$  
Therefore $\depth_S(X_1^aR)=2$, as desired.
\end{proof}

The converse  of Theorem~\ref{thm130728} is more complicated.  
We break the proof into (hopefully) manageable pieces, culminating in Theorem~\ref{thm140730}.

\begin{prop}\label{prop140729}
Let $n\geq 3$ and assume that $K^n_\omega$ contains an induced weighted sub-3-clique of the form
$$\xymatrix{
v_i\ar@{-}[r]^{a}\ar@{-}[d]_{c}&v_j\ar@{-}[dl]^{b}\\
v_k
}$$
with weights $a,b$, and $c$ such that $a< b< c$.  Then $I_{2,\max}(K^n_\omega)$ is mixed. In particular, $I_{2,\max}(K^n_\omega)$ is not Cohen-Macaulay.
\end{prop}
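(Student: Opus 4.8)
The plan is to show that $K^n_\omega$ is $2$-path mixed with respect to $\max$; then $I_{2,\max}(K^n_\omega)$ is m-mixed by Remark~\ref{rmk130806}, and since $A$ is a field this forces $S/I_{2,\max}(K^n_\omega)$ not to be Cohen-Macaulay, as a Cohen-Macaulay quotient of a polynomial ring over a field is unmixed. By Lemma~\ref{lem130728z} there is already a minimal weighted $2$-path vertex cover of $K^n_\omega$ of cardinality exactly $n-2$, so it suffices to exhibit a minimal weighted $2$-path vertex cover of cardinality $n-1$.

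Labeling the displayed sub-$3$-clique so that $\omega(v_iv_j)=a$, $\omega(v_jv_k)=b$, and $\omega(v_iv_k)=c$ with $a<b<c$, I would take $W=V\ssm\{v_i\}$ and define $\sigma\colon W\to\bbn$ by $\sigma(v_j)=b$, $\sigma(v_k)=c$, and $\sigma(v_m)=1$ for every other $v_m\in W$. The first point to verify is that $(W,\sigma)$ is a weighted $2$-path vertex cover: every $2$-path through a weight-$1$ vertex is covered by that vertex (edge weights are $\geq 1$), so the only $2$-paths left to check are the three $2$-paths supported on $\{v_i,v_j,v_k\}$, and a direct check — essentially the computation behind the $3$-clique decomposition in the proof of Proposition~\ref{prop130729} — shows $\{v_j^{\,b},v_k^{\,c}\}$ covers them.

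The crux is to show that no vertex can be deleted from $(W,\sigma)$, i.e., for each $v\in W$ the pair $(W\ssm\{v\},\sigma|_{W\ssm\{v\}})$ fails to be a cover, which I would do by exhibiting a $2$-path covered by $v$ and by no other vertex of $W$. The path $v_iv_jv_k$ is covered only by its midpoint $v_j$ (since $\sigma(v_j)=b=\max\{a,b\}$, whereas $v_k$ as an endpoint would need $c=\sigma(v_k)\leq\omega(v_jv_k)=b$); the path $v_jv_iv_k$ is covered only by the endpoint $v_k$ (since $\sigma(v_k)=c\leq\omega(v_iv_k)=c$, whereas $v_j$ as an endpoint would need $b\leq\omega(v_iv_j)=a$); and for any other $v_m\in W$ the path $v_mv_iv_j$ is covered only by the endpoint $v_m$ (since $\sigma(v_m)=1\leq\omega(v_mv_i)$, whereas $v_j$ as an endpoint would again need $b\leq\omega(v_iv_j)=a$). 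With this in hand, Lemma~\ref{prop130721} yields a minimal weighted $2$-path vertex cover $(W'',\sigma'')\leq(W,\sigma)$; shrinking weights preserves the covering property, so $(W'',\sigma|_{W''})$ is a cover, and enlarging a cover by extra vertices keeps it one, so if $W''\subsetneq W$ then $(W\ssm\{v\},\sigma|_{W\ssm\{v\}})$ would be a cover for any $v\in W\ssm W''$ — contradicting the previous step. Hence $W''=W$ and $(W'',\sigma'')$ is a minimal weighted $2$-path vertex cover of cardinality $n-1$, completing the proof.

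The step that needs the most care is the non-removability of the auxiliary vertices $v_m$ with $m\neq i,j,k$, since the weights of the edges at $v_m$ are arbitrary: the key observation is that the $2$-path $v_mv_iv_j$ meets the particular edge $v_iv_j$ of weight $a$ at its $v_j$-end, so the inequality $a<b=\sigma(v_j)$ alone prevents $v_j$ from covering it once $v_m$ is gone — exactly as the inequality $b<c$ handles the $v_j$- and $v_k$-cases.
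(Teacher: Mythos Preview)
Your proof is correct and follows essentially the same route as the paper's own argument: you construct the very same weighted $2$-path vertex cover $\{v_j^{\,b},v_k^{\,c}\}\cup\{v_m^{\,1}:m\neq i,j,k\}$ on $V\ssm\{v_i\}$, verify it using the same three witness paths $v_iv_jv_k$, $v_jv_iv_k$, and $v_mv_iv_j$, and then pass to a minimal cover of the same cardinality via Lemma~\ref{prop130721}. The only cosmetic difference is that you invoke Lemma~\ref{lem130728z} for the existence of a minimal cover of size $n-2$, whereas the paper cites Proposition~\ref{prop130728a}; both are immediate.
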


\begin{proof}
By symmetry, assume without loss of generality that $i=1$, $j=2$, and $k=3$.
By Theorem~\ref{prop130725}\eqref{prop130725ii}, it suffices to exhibit two minimal weighted 2-path vertex covers 
for $K^n_\omega$ whose cardinalities are not equal.
Since $\dim(S/I_{2,\max}(K^n_\omega))=2$ by Proposition~\ref{prop130728a},
we know that $K^n_\omega$ has a minimal weighted 2-path vertex cover of size $n-2$.
Thus, it suffices to find a minimal weighted 2-path vertex cover of size $n-1$.

Consider the weighted set $\{v_2^b,v_3^c,v_4^{1},\dots,v_n^{1}\}$.  
In light of the assumptions on $a$, $b$, and $c$, it is straightforward to show that this is a weighted 2-path  vertex cover
for $K^n_\omega$. We show that it gives rise to a minimal one of the form $\{v_2^b,v_3^c,v_4^{r_4},\dots,v_n^{r_n}\}$.
Since $c>b$, the weighted path 
$$\xymatrix{*+[F]{v_3^c}\ar@{-}[r]^b
&*+[F]{v_2^b}\ar@{-}[r]^a
&v_1}$$
is covered only by the weighted vertex $v_2^b$.
If the weight $b$ on this vertex were increased, then this weighted path would no longer be covered.   
Thus, the vertex $v_2$ cannot be removed from the cover, and its weight cannot be increased.
Similarly, the weighted path $v_3v_1v_2$ shows that the vertex $v_3$ cannot be removed from the cover,  and its weight cannot be increased.
Lastly, for $j\geq 4$ the weighted path $v_2v_1v_j$ is only covered by $v_j^{1}$.
Thus, the vertex $v_j$ cannot be removed from the cover; however, its weight can be increased.
\end{proof}

\begin{disc}\label{rmk140802a}
The weighted 2-path vertex cover $\{v_2^b,v_3^c,v_4^{1},\dots,v_n^{1}\}$ in the previous proof is not incredibly mysterious.
Indeed, the induced weighted sub-3-clique
$$\xymatrix{
v_1\ar@{-}[r]^{a}\ar@{-}[d]_{c}&v_2\ar@{-}[dl]^{b}\\
v_3
}$$
has $\{v_2^b,v_3^c\}$ as a minimal weighted 2-path vertex cover.
(This can be checked readily as in the previous proof.
Alternately, it follows from the proof of Proposition~\ref{prop130729}; see the discussion in Example~\ref{ex130831a}.)
The given cover for $K^n_\omega$ is built from this one.

When $a<b=c$, one might guess that the vertex cover $\{v_1^b,v_2^b,v_4^{1},\dots,v_n^{1}\}$
can be used to show that $I_{2,\max}(K^n_{\omega})$ is mixed in this case as well. However, the next example shows that this is not the case.
\end{disc}

\begin{ex}\label{ex140802a}
Consider the following weighted 4-clique.
$$\xymatrix{
v_1\ar@{-}@/_/[ddr]_2 \ar@{-}@/^/[drr]^2 \ar@{-}[dr]^{1}\\
&v_2\ar@{-}[r]^2\ar@{-}[d]^2&v_3\ar@{-}[ld]^2\\
&v_4
}$$
It is straightforward to show that we have the following.
\begin{align*}
I_{2,\max}(K^4_\omega)
&=(X_1^{}X_2^2X_3^2, X_1^2X_2^2X_3^{},X_1^{}X_3^2X_4^2,X_1^2X_3^{}X_4^2,X_1^2X_2^2X_4^2,X_2^2X_3^2X_4^2)S\\
&=(X_1^{},X_2^2)S\bigcap(X_1^2,X_3^2)S\bigcap(X_1^{},X_4^2)S\\
&\qquad\bigcap(X_2^2,X_3^{})S\bigcap(X_2^2,X_4^2)S\bigcap(X_3^{},X_4^2)S
\end{align*}
The decomposition here shows that $I_{2,\max}(K^4_\omega)$ is unmixed. However, Theorem~\ref{thm140730} below shows that it is not Cohen-Macaulay
because the weighted sub-3-clique induced by $v_1,v_2,v_3$ is not Cohen-Macaulay; see
Proposition~\ref{prop130729}.
\end{ex}

\begin{prop}\label{prop140730}
Assume that $I_{2,\max}(K^n_\omega)$ is unmixed, and that $K^n_\omega$ has an induced
weighted sub-3-clique $K^3_{\omega'}$ such that $I_{2,\max}(K^3_{\omega'})$ is not Cohen-Macaulay.
Then $K^n_\omega$ has an induced
weighted sub-4-clique of the form
$$\xymatrix{
v_i\ar@{-}@/_/[ddr]_b \ar@{-}@/^/[drr]^b \ar@{-}[dr]^a\\
&v_j\ar@{-}[r]^b\ar@{-}[d]^b&v_k\ar@{-}[ld]^e\\
&v_l
}$$
such that $a<b$.
\end{prop}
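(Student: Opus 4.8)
The plan is to pin down the required $4$-clique from a single, explicitly built weighted $2$-path vertex cover of cardinality $n-1$, using unmixedness to force it to be non-minimal.

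First I would set up the triangle. Since $I_{2,\max}(K^3_{\omega'})$ is not Cohen-Macaulay, Proposition~\ref{prop130729} shows that the weights of $K^3_{\omega'}$, written in increasing order $a\le b\le c$, satisfy $a<b$; and since $I_{2,\max}(K^n_\omega)$ is unmixed, hence not mixed, Proposition~\ref{prop140729} forces $b=c$. So $K^n_\omega$ has distinct vertices $v_i,v_j,v_k$ with $\omega(v_iv_j)=a<b=\omega(v_iv_k)=\omega(v_jv_k)$. Say a vertex $v\notin\{v_i,v_j\}$ \emph{balances} $v_i,v_j$ with \emph{height} $h$ if $\omega(v_iv)=\omega(v_jv)=h$; thus $v_k$ balances $v_i,v_j$ with height $b>a$. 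I would then choose, among all vertices balancing $v_i,v_j$, a vertex $v_l$ with maximal height $M:=\omega(v_iv_l)=\omega(v_jv_l)$, so $M\ge b>a$. Let $(W,\sigma)$ be given by $W=V\ssm\{v_l\}$, $\sigma(v_i)=\sigma(v_j)=M$, and $\sigma(v_m)=1$ for all $m$ with $v_m\notin\{v_i,v_j,v_l\}$. A routine verification (using the identities $\omega(v_iv_l)=\omega(v_jv_l)=M$, and that every other vertex has weight $1$) shows that $(W,\sigma)$ is a weighted $2$-path vertex cover of $K^n_\omega$, of cardinality $n-1$.

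Next I would prune. By Lemmas~\ref{lem130728a} and~\ref{lem130728z}, $K^n_\omega$ has a minimal weighted $2$-path vertex cover of cardinality $n-2$, so unmixedness makes every minimal one of cardinality $n-2$; hence $(W,\sigma)$, of cardinality $n-1$, is not minimal, and Lemma~\ref{prop130721} provides a minimal cover $(W'',\sigma'')\le(W,\sigma)$ with $W''\subsetneq W$. For any $v_m\in W\ssm W''$ the pair $(W\ssm\{v_m\},\sigma|_{W\ssm\{v_m\}})$ is again a cover, since every path covered by a vertex $v\in W''$ is covered by that same $v$ using the smaller weight $\sigma(v)\le\sigma''(v)$. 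Taking $v_m=v_i$ or $v_m=v_j$ is impossible: the path $v_lv_iv_j$, resp.\ $v_lv_jv_i$, would then be covered only by its weight-$M$ endpoint $v_j$, resp.\ $v_i$, which fails because $M>a=\omega(v_iv_j)$. Hence $v_i,v_j\in W''$, so every $v_m\in W\ssm W''$ has $v_m\notin\{v_i,v_j,v_l\}$. Fixing such a $v_m$, the paths $v_lv_mv_i$ and $v_lv_mv_j$ in the cover $W\ssm\{v_m\}$ have $v_i$, resp.\ $v_j$, as their only surviving vertex, occurring as an endpoint of an edge of weight $\omega(v_iv_m)$, resp.\ $\omega(v_jv_m)$; coverage forces $\omega(v_iv_m)\ge M$ and $\omega(v_jv_m)\ge M$.

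Finally I would conclude. We have $\omega(v_iv_j)=a<M\le\omega(v_iv_m),\omega(v_jv_m)$. If $\omega(v_iv_m)\ne\omega(v_jv_m)$, then the induced weighted sub-$3$-clique on $\{v_i,v_j,v_m\}$ has three distinct weights, the smallest being $a$, and Proposition~\ref{prop140729} makes $I_{2,\max}(K^n_\omega)$ mixed, a contradiction. So $\omega(v_iv_m)=\omega(v_jv_m)\ge M$, i.e.\ $v_m$ balances $v_i,v_j$ with height $\ge M$, whence height exactly $M$ by maximality of $M$. Thus $v_l\ne v_m$ are two vertices each joined to both $v_i$ and $v_j$ by an edge of weight $M$, and $\omega(v_iv_j)=a<M$, so the induced weighted sub-$4$-clique on $\{v_i,v_j,v_l,v_m\}$ has exactly the required shape, with the roles of $b$ and $e$ in the statement played by $M$ and $\omega(v_lv_m)$. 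The main obstacle is precisely this last step: the construction only naturally yields a fourth vertex joined to $v_i,v_j$ by weight \emph{at least} $b$, and it is the extremal choice of $v_l$ together with Proposition~\ref{prop140729} that forces two such vertices to share a \emph{common} weight, so that all four ``cross edges'' of the $4$-clique coincide.
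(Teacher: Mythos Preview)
Your proof is correct and follows essentially the same strategy as the paper's: start from a non-Cohen-Macaulay triangle, make an extremal choice, build an explicit weighted $2$-path vertex cover of size $n-1$, use unmixedness to force a removable vertex, and then use Proposition~\ref{prop140729} together with the extremal choice to pin down the edge weights of the resulting $4$-clique. The only difference is cosmetic: the paper picks the triangle so that the repeated weight $b$ is maximal among all non-Cohen-Macaulay sub-$3$-cliques, whereas you keep $v_i,v_j$ fixed and instead maximize the height of a balancing vertex; both extremal choices serve the same purpose of forcing the new cross-edges to have weight exactly $b$ (your $M$) rather than merely $\ge b$.
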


\begin{proof}
Without loss of generality, assume that the non-Cohen-Macaulay induced weighted sub-3-clique is on the vertices $v_1,v_2,v_3$ 
as follows
$$\xymatrix{
v_1\ar@{-}[dr]_{a}\ar@{-}[r]^{b}&v_3\ar@{-}[d]^{b}\\
&v_2
}$$
with $a<b$. Note that it must have this form by Propositions~\ref{prop130729} and~\ref{prop140729},
because of our unmixedness assumption.
Assume without loss of generality that $b$ is maximal among all weights occurring in a non-Cohen-Macaulay
induced sub-3-clique.

It is readily shown that the set $\{v_1^b,v_2^b,v_4^{1},\dots, v_n^{1}\}$ is a weighted 2-path vertex cover.  
As in the proof of Proposition~\ref{prop140729}, the path $v_3v_1v_2$ shows that
the vertex $v_1^b$ cannot be removed from this cover, and its weight cannot be increased.
Similarly, the path $v_1v_2v_3$ shows that
the vertex $v_2^b$ cannot be removed from this cover, and its weight cannot be increased.
Because of our unmixedness assumption, Theorem~\ref{prop130725}\eqref{prop130725ii}
and Proposition~\ref{prop130728a} imply that every minimal weighted 2-path vertex cover of $K^n_\omega$ has cardinality $n-2$.
Since the given cover has size $n-1$, one of the vertices $v_4$ through $v_n$ can be removed to create a  weighted 2-path vertex cover.  
Reorder the vertices if necessary so that $v_4$ is the vertex that can be removed.
Lemma~\ref{prop130721} shows that this gives rise to a minimal weighted 2-path vertex cover of the form
$\{v_1^b,v_2^b,v_5^{r_5},\dots, v_n^{r_n}\}$.

Label the induced weighted subgraph with  vertices $v_1,v_2,v_3,v_4$ as follows.
$$\xymatrix{
*+[F]{v_1^b}\ar@{-}@/_/[ddr]_c \ar@{-}@/^/[drr]^b \ar@{-}[dr]^a\\
&*+[F]{v_2^b}\ar@{-}[r]^b\ar@{-}[d]^d&v_3\ar@{-}[ld]^e\\
&v_4
}$$
Since $a<b$, the path $v_1v_2v_4$ must be covered by $v_2^b$.  Thus $b\leq d$.  Similarly, the vertex $v_1^b$ must cover the path $v_2v_1v_4$,
so $b\leq c$.  
Thus, we have $a<b\leq c,d$, so the  weighted sub-3-clique induced by $v_1,v_2,v_4$ is not Cohen-Macaulay.
Proposition~\ref{prop140729}  implies that $c=d$, and the maximality of $b$ implies that $c\leq b$, that is $c=b$. 
Thus, the above sub-4-clique has the desired form.
\end{proof}

The next result contains the remainder of Theorem~\ref{inthm130728} from the introduction.

\begin{thm}\label{thm140730}
Assume that $K^n_\omega$ contains at least one induced weighted sub-3-clique
$K^3_{\omega'}$ such that $I_{2,\max}(K^3_{\omega'})$ is not Cohen-Macaulay.
Then $I_{2,\max}(K^n_{\omega})$ is not Cohen-Macaulay.
\end{thm}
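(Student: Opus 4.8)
The plan is to split along the unmixed/mixed dichotomy, dispose of the mixed case immediately, and in the unmixed case run the machinery of the preceding results in reverse via an induction on $n$.

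\emph{Setup.} If $I_{2,\max}(K^n_\omega)$ is m-mixed then it is mixed over the field $A$, hence not Cohen--Macaulay (a Cohen--Macaulay quotient of $S$ is m-unmixed, as in the implication $\eqref{prop130729a}\implies\eqref{prop130729b}$), and we are done; so assume $I_{2,\max}(K^n_\omega)$ is m-unmixed. By hypothesis $K^n_\omega$ has an induced weighted sub-$3$-clique $K^3_{\omega'}$ with $I_{2,\max}(K^3_{\omega'})$ not Cohen--Macaulay, which by Proposition~\ref{prop130729} has weights $a\leq b\leq c$ with $a<b$. If $b<c$, then Proposition~\ref{prop140729} makes $I_{2,\max}(K^n_\omega)$ m-mixed, contradicting our assumption; hence $a<b=c$, and Proposition~\ref{prop140730} then produces an induced weighted sub-$4$-clique $\Delta$ of the displayed special form with ``$a<b$''. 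After relabeling, $\Delta$ is on $v_1,v_2,v_3,v_4$, with $\omega(v_1v_2)$ the small weight, each of the four edges joining $\{v_1,v_2\}$ to $\{v_3,v_4\}$ of the strictly larger weight $b$, and $\omega(v_3v_4)$ arbitrary. It remains to show that the presence of such a $\Delta$ inside the m-unmixed clique forces $\depth\bigl(S/I_{2,\max}(K^n_\omega)\bigr)\leq 1<2=\dim\bigl(S/I_{2,\max}(K^n_\omega)\bigr)$, the dimension being Proposition~\ref{prop130728a}; this gives non-Cohen--Macaulayness.

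\emph{The induction.} I would induct on $n\geq 3$. The base case $n=3$ is Proposition~\ref{prop130729} (a ``bad'' $K^3_\omega$ has $a<b=c$, hence $I_{2,\max}$ not Cohen--Macaulay). For $n\geq 4$, write $I=I_{2,\max}(K^n_\omega)$, $R=S/I$, let $a$ be the smallest edge weight of $K^n_\omega$, and choose a vertex $v_t$ whose deletion leaves an $(n-1)$-clique $K^{n-1}_{\omega'}$ still containing a non-Cohen--Macaulay induced sub-$3$-clique: for $n\geq 5$ take any $v_t\notin\Delta$ (so $\Delta$, and with it the bad sub-$3$-clique on $\{v_1,v_2,v_3\}$, survives), and for $n=4$ take $v_t=v_3$, leaving the bad sub-$3$-clique on $\{v_1,v_2,v_4\}$. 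Since every exponent occurring in a generator of $I$ is at least $a$, every generator of $I$ involving $X_t$ lies in $(X_t^{a})S$; hence, as in Lemma~\ref{lem130728}, $I+(X_t^{a})S=I_{2,\max}(K^{n-1}_{\omega'})S+(X_t^{a})S$ and therefore $R/X_t^{a}R\cong R'[X_t]/(X_t^{a})$, where $R'=S'/I_{2,\max}(K^{n-1}_{\omega'})$. By the inductive hypothesis $R'$ is not Cohen--Macaulay, while $\dim R'=2$ by Proposition~\ref{prop130728a} and $\depth R'\geq 1$ by Corollary~\ref{prop130728bz}; so $\depth R'=1$, and hence $\depth\bigl(R/X_t^{a}R\bigr)=1$ as well (the ring $R'[X_t]/(X_t^{a})$ is $R'$-free of rank $a$ and $X_t$ is nilpotent on it).

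\emph{The crux.} Now consider the short exact sequence $0\to X_t^{a}R\to R\to R/X_t^{a}R\to 0$ with $X_t^{a}R\cong S/(I:_SX_t^{a})$, which has dimension $2$. A depth-lemma computation shows that if $\depth R=2$ then $\depth(X_t^{a}R)\geq\min\{\depth R,\ \depth(R/X_t^{a}R)+1\}=2$, so $S/(I:_SX_t^{a})$ would be Cohen--Macaulay of dimension $2$; thus it suffices to prove that $S/(I:_SX_t^{a})$ is \emph{not} Cohen--Macaulay, for then $\depth R=1$ and the induction closes. For this one computes $(I:_SX_t^{a})$ explicitly from the generators of $I$ by the colon formula of Remark~\ref{disc140802a}, splitting into cases according to how the weights $\omega(v_tv_i)$ compare with $a$ exactly as in Cases~1--3 of the proof of Theorem~\ref{thm130728}, and then uses the edge $v_1v_2$ of $\Delta$ (of weight strictly greater than $a$) to exhibit a homological obstruction in $S/(I:_SX_t^{a})$ --- this is the precise configuration that Cases~1--3 of Theorem~\ref{thm130728} were built to exclude, and here one must show it is forced. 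The base case $n=4$ (where $K^4_\omega=\Delta$ itself) is the hands-on instance of this last argument.

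\emph{Main obstacle.} I expect the genuinely difficult step to be this final one: establishing that the colon ideal $(I:_SX_t^{a})$ defines a non-Cohen--Macaulay ring whenever the special sub-$4$-clique $\Delta$ is present (equivalently, that $\depth R$ drops to $1$). This is the technical heart of Section~\ref{sec130827c}, mirroring in reverse the delicate analysis carried out in the proof of Theorem~\ref{thm130728}.
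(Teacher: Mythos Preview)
Your setup and reductions are fine, and the depth-lemma manipulation is correct: if $\depth R=2$ then $\depth(X_t^{a}R)=2$, so it would suffice to show $S/(I:_SX_t^{a})$ is not Cohen--Macaulay. The problem is that this last step---which you yourself flag as the ``main obstacle''---is the entire content of the theorem, and your induction does nothing to reduce it. The colon ideal $(I:_SX_t^{a})$ is \emph{not} a $2$-path ideal of a sub-clique (in the proof of Theorem~\ref{thm130728} it is identified, in the favorable cases, with an \emph{edge} ideal of a weighted $K^{n-1}$, not a $2$-path ideal), so the inductive hypothesis does not apply to it. You are left having to prove from scratch that a certain monomial ideal in $n$ variables has depth at most $1$, which is no easier than the original problem. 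The ``hands-on instance'' $n=4$ is not a base case for this step either: you still have to exhibit the depth drop directly. In short, the induction is circular in spirit---each step defers the real work to an auxiliary ideal for which you have no inductive leverage.

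The paper's argument is structurally different and avoids induction altogether. It writes $I=I_1\cap I_*$, where $I_1$ (respectively $I_*$) is the intersection of the m-irreducible components coming from minimal weighted $2$-path vertex covers that do (respectively do not) use $v_1$, and applies the Depth Lemma to the Mayer--Vietoris sequence
\[
0\longrightarrow S/I\longrightarrow S/I_1\oplus S/I_*\longrightarrow S/(I_1+I_*)\longrightarrow 0.
\]
The work is then to show $\depth\bigl(S/(I_1+I_*)\bigr)=0$, i.e.\ that $\ideal m$ is associated to $I_1+I_*$. This is done combinatorially: the special sub-$4$-clique from Proposition~\ref{prop140730} furnishes explicit covers $\{v_1^{b},v_2^{b},v_5^{\alpha_5},\dots\}$ and $\{v_3^{b},v_4^{b},v_5^{\beta_5},\dots\}$ whose associated m-irreducible ideals sum to an ideal involving all $n$ variables, and one then argues (using the weight inequality $a<b$) that no ideal in the decomposition of $I_1+I_*$ on fewer than $n$ variables can be contained in this sum. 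This directly produces an $\ideal m$-primary component, hence depth $0$. The key idea you are missing is this splitting of the decomposition of $I$ according to whether a fixed vertex appears in the cover, which converts the depth question into a purely combinatorial irredundancy check.
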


\begin{proof}
If $I:=I_{2,\max}(K^n_{\omega})$ is mixed, then we are done.
So, we assume that $I$ is unmixed.
Theorem~\ref{prop130725}\eqref{prop130725ii}
and Proposition~\ref{prop130728a} imply that every minimal weighted 2-path vertex cover of $K^n_\omega$ has cardinality $n-2$.
Also, Lemma~\ref{lem130728z} shows that every subset of $V$ of cardinality $n-2$ occurs as a 
minimal weighted 2-path vertex cover.

Every induced weighted sub-3-clique of $K^n_\omega$ has the form
$$\xymatrix{
v_i\ar@{-}[r]^{a}\ar@{-}[d]_{c}&v_j\ar@{-}[dl]^{b}\\
v_k
}$$
with $a\leq b\leq c$.  
By assumption, $K^n_\omega$ contains at least one such sub-clique with $a<b\leq c$;
see Proposition~\ref{prop130729}.
Furthermore, Proposition~\ref{prop140729} implies that every such sub-clique has $a<b=c$.

Using Proposition~\ref{prop140730} and reordering the vertices if necessary, we obtain an induced weighted subgraph of the following form
\begin{equation}
\begin{split}\label{eq140731d}
\xymatrix{
v_1\ar@{-}@/_/[ddr]_b \ar@{-}@/^/[drr]^b \ar@{-}[dr]^a\\
&v_2\ar@{-}[r]^b\ar@{-}[d]^b&v_3\ar@{-}[dl]^c\\
&v_4
}
\end{split}
\end{equation}
with $a<b$.

Using Theorem~\ref{prop130725}\eqref{prop130725ii}
we have a minimal m-irreducible decomposition
\begin{equation}\label{eq140731a}
I=\bigcap(X_{j_1}^{\beta_1},X_{j_2}^{\beta_2},\dots,X_{j_{n-2}}^{\beta_{n-2}})S
\end{equation}
where the intersection is taken over all minimal weighted 2-path vertex covers 
$\{v_{j_1}^{\beta_1},v_{j_2}^{\beta_2},\dots,v_{j_{n-2}}^{\beta_{n-2}}\}$
of $K^n_\omega$.
We set
\begin{equation}\label{eq140731b}
I_1:=\bigcap(X_1^{\alpha_1},X_{k_1}^{\alpha_{k_1}},\dots,X_{k_{n-3}}^{\alpha_{k_{n-3}}})S
\end{equation}
where the intersection is taken over all minimal weighted 2-path vertex covers of $K^n_\omega$ that contain the vertex $v_1$.
Next, set  
\begin{equation}\label{eq140731c}
I_*:=\bigcap_{j_i\not=1}(X_{j_1}^{\beta_1},X_{j_2}^{\beta_2},\dots,X_{j_{n-2}}^{\beta_{n-2}})S
\end{equation}
where the intersection is taken over all  minimal weighted 2-path vertex covers that do not contain the vertex $v_1$.
By definition, this yields $I=I_1\bigcap I_*$.
Moreover, the first paragraph of this proof implies that each of these intersections is taken over a non-empty index set.

Note that the irredundancy of the intersection in~\eqref{eq140731a} implies that the two subsequence intersections are also irredundant.
It follows that the maximal ideal $\m=(X_1,\ldots,X_n)S$ is not associated to $I_1$ and is not associated to $I_*$.
Thus, we have
$1\leq\depth(S/I_1)\leq\dim(S/I_1)=2$
and
$1\leq\depth(S/I_*)\leq\dim(S/I_*)=2$.
Since we have  $\dim(S/I)=2$, it remains to show that $\depth(S/I)=1$.

Consider the short exact sequence
$$0\rightarrow S/I\rightarrow S/I_1\oplus S/I_*\rightarrow S/(I_1+I_*)\rightarrow 0.$$
By the Depth Lemma (or a routine long-exact-sequence argument), in order to show that
$\depth(S/I)=1$, it suffices to show that $\depth(S/(I_1+I_*))=0$, that is, that $\m$ is  associated to $I_1+I_*$.

From the decompositions~\eqref{eq140731b} and~\eqref{eq140731c}, we have
\begin{equation}\label{eq140731e}
I_1+I_*=\bigcap\bigcap_{j_i\not=1}\left[(X_1^{\alpha_1},X_{k_1}^{\alpha_{k_1}},\dots,X_{k_{n-3}}^{\alpha_{k_{n-3}}})S
+(X_{j_1}^{\beta_1},X_{j_2}^{\beta_2},\dots,X_{j_{n-2}}^{\beta_{n-2}})S\right]
\end{equation}
where the first intersection is taken over all minimal weighted 2-path vertex covers that contain the vertex $v_1$,
and the second intersection is taken over all  minimal weighted 2-path vertex covers that do not contain the vertex $v_1$;
see, e.g., \cite[Lemma 2.7]{ingebretson:dmirsr}.
Note that this is an m-irreducible decomposition, though it may be redundant.
We need to show that there is an ideal in this intersection of the form
$(X_1^{\delta_1},X_2^{\delta_2},\ldots,X_n^{\delta_n})S$ that is irredundant in the intersection.

Given the sub-clique~\eqref{eq140731d},
it is straightforward to show that there are minimal weighted 2-path vertex covers of $K^n_\omega$ of the form
$\{v_1^b,v_2^b,v_5^{\alpha_5},\dots,v_n^{\alpha_n}\}$
and $\{v_3^b,v_4^b,v_5^{\beta_5},\dots,v_n^{\beta_n}\}$.
In particular, the ideal $P_1:=(X_1^b,X_2^b,X_5^{\alpha_5},\dots,X_n^{\alpha_n})S$ occurs in the 
decomposition~\eqref{eq140731b},
and the ideal $P_*:=(X_3^b,X_4^b,X_5^{\beta_5},\dots,X_n^{\beta_n})S$ occurs in the 
decomposition~\eqref{eq140731c}.
Thus, the ideal 
$$P_1+P_*=(X_1^b,X_2^b,X_3^b,X_4^b,X_5^{\gamma_5},X_6^{\gamma_6},\dots,X_n^{\gamma_n})S$$
is in the intersection~\eqref{eq140731e}, where $\gamma_i=\min\{\alpha_i,\beta_i\}$.  

Let $Q_1$ be an ideal occurring in the intersection~\eqref{eq140731b},
and let $Q_*$ be an ideal occurring in the intersection~\eqref{eq140731c}.
Suppose that 
\begin{equation}\label{eq140731f}
(X_{t_1}^{\zeta_1},X_{t_2}^{\zeta_2},\dots,X_{t_{g}}^{\zeta_{g}})S
=Q_1+Q_*\subseteq P_1+P_*
\qquad\text{with $g\leq n-1$.}
\end{equation}

Claim 1: we have
$Q_*=(X_3^{\eta_3},X_4^{\eta_4},X_5^{\eta_5},X_6^{\eta_6},\dots,X_n^{\eta_n})S$
for some $\eta_3,\ldots,\eta_n$.
By assumption, we have 
$Q_*=(X_{j_1}^{\eta_1},X_{j_2}^{\eta_2},\dots,X_{j_{n-2}}^{\eta_{n-2}})S$ 
with $j_i> 1$ for $i=1,\ldots,n-2$.
It suffices to show that $j_i\neq 2$ for all $i$.
Suppose that $j_i=2$ for some $i$. Given the conditions on the generators of $Q_*$, there must be an index $k\neq 1$ such that
$j_i\neq k$ for all $i$. 
Then $v_2^{\eta_2}$ must cover the path $v_2v_1v_k$.  This implies that $\eta_2\leq a$.  
On the other hand, since 
$$X_2^{\eta_2}\in Q_1+Q_*\subseteq P_1+P_*=(X_1^b,X_2^b,X_3^b,X_4^b,X_5^{\gamma_5},X_6^{\gamma_6},\dots,X_n^{\gamma_n})S$$ 
we have $\eta_2\geq b>a\geq \eta_2$, a contradiction.
This establishes Claim 1.

Claim 2: we have
$Q_1=(X_1^{\mu_1},X_{m_1}^{\mu_{m_1}},\dots,X_{m_{n-3}}^{\mu_{m_{n-3}}})S$ for some $\mu_1,\mu_{m_1},\dots,\mu_{m_{n-3}}$ with $m_i>2$ for all $i$.
By assumption, we have
$Q_1=(X_1^{\mu_1},X_{m_1}^{\mu_{m_1}},\dots,X_{m_{n-3}}^{\mu_{m_{n-3}}})S$ with $m_i\geq 2$.
From the equality in~\eqref{eq140731f}, we have
$$\{t_1,\ldots,t_g\}=\{3,\ldots,n\}\bigcup\{1,m_1,\ldots,m_{n-3}\}.$$
Since $g\leq n-1$, the inclusion-exclusion principle implies that
$$\left|\{3,\ldots,n\}\bigcap\{1,m_1,\ldots,m_{n-3}\}\right|\geq n-3.$$
Since $1\notin\{3,\ldots,n\}$ it follows that $m_1,\ldots,m_{n-3}\in\{3,\ldots,n\}$, that is, that $m_i>2$ for all $i$.
This establishes Claim 2.

Claim 2 says that $X_2$ does not appear to any power in the list of generators of $Q_1$.
Given the form and number of the generators of $Q_1$, it follows that there is another variable, say $X_p$ with $p\geq 3$,
that has no power occurring in this list.
By assumption, the set $\{v_1^{\mu_1},v_{m_1}^{\mu_{m_1}},\dots,v_{m_{n-3}}^{\mu_{m_{n-3}}}\}$ is a
minimal weighted  2-path vertex cover of $K^n_\omega$. 
It follows that the path $v_1v_2v_p$ is covered by $v_1^{\mu_1}$, which implies that $\mu_1\leq a$.  
However, we have $X_1^{\mu_1}\in Q_1+Q_*\subset P_1+P_*$;
as in the proof of Claim 1, this implies that $\mu_1\geq b>a\geq\mu_1$, contradiction.
We conclude that the supposition~\eqref{eq140731f} is impossible.

From this, we deduce that the only way one can have $Q_1+Q_*\subseteq P_1+P_*$
is with
$$Q_1+Q_*=(X_1^{\delta_1},X_2^{\delta_2},\ldots,X_n^{\delta_n})S$$
for some $\delta_i$. It follows that at least one ideal of this form is irredundant in the intersection~\eqref{eq140731e}, as desired.
\end{proof}

We end  with a question motivated by the results of this section.

\begin{questions}
Is there a similar characterization of the Cohen-Macaulayness of $I_{r,\max}(K^n_{\omega})$ when $r\geq 3$?
For instance, must the ideal $I_{r,\max}(K^n_\omega)$ be Cohen-Macaulay if and only if
every induced weighted sub-$(r+1)$-clique $K^{r+1}_{\omega'}$ of $K^n_\omega$ has $I_{r,\max}(K^{r+1}_{\omega'})$ Cohen-Macaulay?
\end{questions}

\section*{Acknowledgments}
We are grateful to Susan Morey for helpful conversations about this material.

\providecommand{\bysame}{\leavevmode\hbox to3em{\hrulefill}\thinspace}
\providecommand{\MR}{\relax\ifhmode\unskip\space\fi MR }
\providecommand{\MRhref}[2]{%
  \href{http://www.ams.org/mathscinet-getitem?mr=#1}{#2}
}
\providecommand{\href}[2]{#2}

\end{document}